\newcolumntype{L}[1]{>{\raggedright\let\newline\\\arraybackslash\hspace{0pt}}m{#1}}
\newcolumntype{C}[1]{>{\centering\let\newline\\\arraybackslash\hspace{0pt}}m{#1}}
\newcolumntype{R}[1]{>{\raggedleft\let\newline\\\arraybackslash\hspace{0pt}}m{#1}}
\numberwithin{equation}{section}
\theoremstyle{plain}
\newtheorem{theorem}{Theorem}[section]
\newtheorem{lemma}[theorem]{Lemma}
\newtheorem{corollary}[theorem]{Corollary}
\newcommand{\bbR}{\mathbb{R}}
\begin{document}
\bibliographystyle{imsart-nameyear}
\begin{frontmatter}
\title{Sequential Markov Chain Monte Carlo}
\runtitle{Sequential Markov Chain Monte Carlo}

\begin{aug}
\author{\fnms{Yun} \snm{Yang}
\thanksref{t2}
\ead[label=e1]{yy84@stat.duke.edu}}
\and
\author{\fnms{David B.} \snm{Dunson}
\ead[label=e2]{dunson@stat.duke.edu}}

\thankstext{t2}{Supported by grant ES017436 from the National Institute of Environmental Health Sciences (NIEHS) of the National Institutes of Health
(NIH).}
\runauthor{Y. Yang et al.}

\affiliation{Duke University\thanksmark{m1}}

\address{Department of Statistical Science\\
Duke University\\
Box 90251\\
NC 27708-0251, Durham, USA\\
\printead{e1}\\
\phantom{E-mail:\ }\printead*{e2}}
\end{aug}

\begin{abstract}
We propose a sequential Markov chain Monte Carlo (SMCMC) algorithm to sample from a sequence of probability distributions, corresponding to posterior distributions at different times in on-line applications.  SMCMC proceeds as in usual MCMC but with the stationary distribution updated appropriately each time new data arrive.  SMCMC has advantages over sequential Monte Carlo (SMC) in avoiding particle degeneracy issues.  We provide theoretical guarantees for the marginal convergence of SMCMC under various settings, including parametric and nonparametric models.  The proposed approach is compared to competitors in a simulation study.  We also consider an application to on-line nonparametric regression.
\end{abstract}

\begin{keyword}[class=AMS]
\kwd[Primary ]{60K22}
\kwd{60K35}
\kwd[; secondary ]{60K35}
\end{keyword}

\begin{keyword}
\kwd{Bayesian updating}
\kwd{On-line}
\kwd{Sequential Monte Carlo}
\kwd{Streaming data}
\kwd{Time series}
\end{keyword}
\end{frontmatter}

\section{Introduction}
The Bayesian paradigm provides a natural formalism for optimal learning from data in a sequential manner, with the posterior distribution at one time point becoming the prior distribution at the next. Consider the following general setup. Let $\{\pi_t:t\in\mathcal{T}\}$ be a sequence of probability distributions indexed by discrete time $t\in\mathcal{T}=\{0,1,\ldots\}$. Assume that each $\pi_t$ can either be defined on a common measurable space $(E,\mathcal{E})$ or a sequence of measurable spaces $\{(E_t,\mathcal{E}_t):t\in\mathcal{T}\}$ with non-decreasing dimensions $d_0\leq d_1\leq\ldots$. Without loss of generality, we assume that $(E_t,\mathcal{E}_t)=(\bbR^{d_t},\mathcal{B}(\bbR^{d_t}))$, where $\mathcal{B}(\bbR^{d_t})$ is the Borel field on $\bbR^{d_t}$. Moreover, $\pi_t$ admits a density $\pi_t(\theta^{(t)})$ with respect to the Lebesgue measure $\lambda^{d_t}(d\theta^{(t)})$, where $\theta^{(t)}=(\theta^{(t-1)},\eta_t)$ is the quantity or parameter of interest at $t$ and $\eta_t\in\bbR^{d_t-d_{t-1}}$ is the additional component other than $\theta^{(t)}$. This framework can be considered as a generalization of \cite{Liu1998} from dynamic systems to arbitrary models or extension of \cite{Moral2006} from fixed space $E$ to time-dependent space $E_t$.

Many applications can be placed within this setting. In the sequential Bayesian inference context, $\theta^{(t)}$ corresponds to a vector composed of all the parameters and other unknowns to sample at time $t$. Similarly, $\pi_t$ is the posterior distribution of $\theta^{(t)}$ given the data collected until time $t$. For example, in generalized linear models with fixed number of covariates, $\theta^{(t)}$ includes the regression coefficients and residual variance and $d_t$ is a constant. In finite mixture models, $\theta^{(t)}$ includes both the parameters of the mixture components and mixing distribution, and the latent class indicators for each observation, so that $d_t$ is increasing with $t$. In state-space models, $\theta^{(t)}$ could be a vector composed of static parameters and state space variables, where the size of the latter grows with $t$. Even in batch situations where a full dataset $\{y_1,\ldots,y_n\}$ has been obtained, we can still consider the sequence of posterior distributions $p(\theta^{(t)}|y_1,\ldots,y_t)$ for $t\leq n$. The annealing effect of adding data sequentially can lead to substantial improvements over usual MCMC methods, which incorporate all the data at once and sample serially.

Markov Chain Monte Carlo (MCMC) is an important statistical analysis tool, which is designed to sample from complex distributions. It can not only be used for Bayesian analysis where a normalizing constant is unknown, but also for frequentist analysis when the likelihood involves high dimensional integrals such as in missing data problems and mixed effects models. However, in general, MCMC methods have several major drawbacks. First, it is difficult to assess whether a Markov chain has reached its stationary distribution. Second, a Markov chain can be easily trapped in local modes, which in turn would impede convergence diagnostics. To speed up explorations of the state space, annealing approaches introduce companion chains with flattened stationary distributions to facilitate the moves among separated high energy regions \citep{geyer1991,earlab2005,Kou2006}.

An alternative to MCMC is sequential Monte Carlo (SMC). The main idea of SMC is to represent the distribution $\pi_t$ through the empirical distribution $\hat{\pi}_t=\sum_{i=1}^NW_t^{(i)}\delta_{X_t^{(i)}}$, where $\{(W_t^{(i)},X_t^{(i)}):i=1,\ldots,N\}$ is a finite set of $N$ weighted particles with $\sum_{i=1}^NW_t^{(i)}=1$ and $\delta_x$ is the Dirac measure at $x$. As a new observation $y_{t+1}$ arrives, both weights and states of particles are updated in order to represent the new posterior $\pi_{t+1}$. Although SMC can potentially solve many of the drawbacks of MCMC mentioned above, it suffers from the notorious weight degeneracy issue where few particles quickly dominate as $t$ increases, causing performance based on $\hat{\pi}_t$ to degrade. Moreover, numerical errors introduced in an early stage can accumulate for some SMCs when static parameters are present \citep{Storvik2002}. Although many variants of SMC, such as adaptive importance sampling \citep{West1993}, resample-move strategies \citep{chopin2002} and annealed importance sampling \citep{Neal2001}, are proposed to alleviate the weight degeneracy problem, issues remain, particularly in models involving moderate to high-dimensional unknowns.

In this paper, we propose a sequential MCMC algorithm to sample from $\{\pi_t:t\in\mathcal{T}\}$ that is based on parallel sequential approximation algorithms. The proposed sequential MCMC is a population-based MCMC, where each chain is constructed via specifying a transition kernel $T_t$ for updating $\theta^{(t)}$ within time $t$ and a jumping kernel $J_t$ for generating additional component $\eta_t$. The annealing effect of sequential MCMC can substantially boost efficiency of MCMC algorithms with poor mixing rates with slight modifications. By exploiting multiple processors, SMCMC has comparable total computational burden as MCMC. For streaming data problems, SMCMC distributes this burden over time and allows one to extract current available information at any time point. We develop a rigorous theoretical justification on the convergence of SMCMC and provide explicit bounds on the error in terms of a number of critical quantities. The proofs also improve some existing results on the convergence of MCMC. The theory indicates an opposite phenomenon as the weight degeneracy effect of SMC: the deviations or numerical errors in the early stage decay exponentially fast as $t$ grows, leading to estimators with increasing accuracy.

The paper has the following organization. In Section 2, we present a generic SMCMC algorithm to sample from a sequence of distributions $\{\pi_t:t\in\mathcal{T}\}$ and discuss possible variations. In Section 3, we study the convergence properties of SMCMC under various settings, including parametric and nonparametric models. Section 4 compares SMCMC with other methods in a finite mixture of normals simulation. In Section 5, we apply SMCMC to an on-line nonparametric regression problem.

\section{Sequential Markov chain Monte Carlo}
We propose a sequential Markov chain Monte Carlo (SMCMC) class of algorithms in this section. The main idea of SMCMC is to run time-inhomogeneous Markov chains in parallel with the transition kernels depending on the current available data. Inferences can be made by using the ensemble composed of the last samples in those chains.

\subsection{Notation and assumptions}
Let $Y_t$ denote the data coming in at time $t$, $Y^{(t)}=(Y_1,\ldots,Y_t)$ the entire data up to $t$, $\theta^{(t)}$ the parameters at time $t$, $d_t$ the size of $\theta^{(t)}$ and $\pi^{(t)}(\theta^{(t)})$ the prior distribution, implying that we can add parameters over time. Although not necessary, for notational simplicity we assume that the prior is compatible: $\pi^{(t)}(\theta^{(t)})=\int\pi^{(t+1)}(\theta^{(t)},\eta_{t+1})d\eta_{t+1}$ with $\theta^{(t+1)}=(\theta^{(t)},\eta_{t+1})$. Under this assumption, we can suppress the superscript $t$ in $\pi^{(t)}$. The compatibility assumption is a consequence of the restriction that if the extra parameters in the prior at time $t+1$ are marginalized out, then we recover the prior at time $t$. This restriction is trivially satisfied under the special case when $d_t$ does not grow with time, and is also true under more general priors such as hierarchical priors for mixed effects models and Gaussian process priors for nonparametric regression. We propose to conduct $L$ Markov chains in parallel exploiting $L$ processors to obtain samples, $\theta^{(t,l)}=\{\theta^{(1,t,l)},\ldots,\theta^{(m_t,t,l)}\}$ for $t=1,2,\ldots$ and $l=1,\ldots,L$, where $m_t$ is the number of draws obtained at time $t$ for each chain and $\theta^{(s,t,l)}\in\bbR^{d_t}$ is the $s$th draw obtained in the $l$th chain at $t$. The ensemble $\Theta_t=\{\theta^{m_t,t,l}:l=1,\ldots,L\}$ will be treated as independent draws sampled from the posterior $\pi_t(\theta^{(t)})=\pi(\theta^{(t)}|Y^{(t)})$ at time $t$.

\subsection{Markov chain construction}\label{se:al}
At each time $t$, we consider two kernels: a jumping kernel $J_t$ proposing the parameter jumping from $t-1$ to $t$ at the beginning of time $t$ and a transition kernel $T_t$ specifying the parameter updating process within time $t$. $J_t(\cdot,\cdot)$ is defined on $\bbR^{d_{t-1}}\times\bbR^{d_{t}}$ and is primarily designed for the situation when the parameter grows at $t$. In the case when $d_t=d_{t-1}$, $J_t$ could be chosen as the identity map.
$T_t(\cdot,\cdot)$ is defined on $\bbR^{d_t}\times\bbR^{d_t}$ so that the posterior $\pi_t$ is the stationary measure of the Markov chain with transition kernel $T_t$, i.e.
\[
\pi_t(\theta')=\int_{\bbR^{d_t}}\pi_t(\theta)\ T_t(\theta,\theta')d\theta.
\]
$T_t$ aims at transferring the distribution of the draws in $\Theta_{t-1}$ from $\pi_{t-1}$ to $\pi_{t}$. From standard Markov chain theory \citep{Meyn1993}, if the chain with transition kernel $T_t$ is an aperiodic recurrent Harris chain, then $||T_t^{m_t}\circ p_0-\pi_{t}||_1\to 0$ as $m_t\to\infty$ for any initial distribution $p_0$. Therefore, as we repeat applying the transition $T_t$ for enough times, the distribution of $\Theta_t$ will converge to $\pi_{t}$. Theorem \ref{prop:p1} in section \ref{se:cmt} quantifies such approximation error with given $m_t$. Section \ref{se:mt} provides recommendations on automatically choosing $m_t$ in practice.

We construct our SMCMC based on $J_t$ and $T_t$ as follows:
\begin{enumerate}
  \item At $t=0$, we set $m_t=1$ and draw $L$ samples from a known distribution, for example, the prior $\pi=\pi_0$. The samples at $t=0$ are denoted as $\theta^{(1,0,1)},\ldots,\theta^{(1,0,L)}$.
  \item At $t>0$, we first update $\theta^{(m_{t-1},t-1,l)}$ to $\theta^{(1,t,l)}$ through the jumping kernel $J_t$ as
\[
P\big(\theta^{(1,t,l)}\big|\theta^{(m_{t-1},t-1,l)}\big)
=J_t\big(\theta^{(m_{t-1},t-1,l)},\theta^{(1,t,l)}\big),
\]
in parallel for $l=1,\ldots,L$. Then, for $s=1,\ldots,m_{t}-1$, $\theta^{(s,t,l)}$ is sequentially transited to $\theta^{(s+1,t,l)}$ through the transition kernel $T_t$ as
\[
P\big(\theta^{(s+1,t,l)}\big|\theta^{(s,t,l)}\big)
=T_t\big(\theta^{(s,t,l)},\theta^{(s+1,t,l)}\big),
\]
in parallel for $l=1,\ldots,L$.
\end{enumerate}
With the above updating scheme, the last samples $\{\theta^{(m_t,t,l)}:l=1,\ldots,L\}$ at $t$ would be taken as the ensemble $\Theta_t$ to approximate the posterior $\pi_t$. Theorem \ref{thm:main} in section \ref{se:cmt} and Theorem \ref{thm:mainb} in section \ref{se:ipd} guarantee the error decays to zero as $t$ increases to infinity as long as $||\pi_t-\pi_{t-1}||_1\to 0$. When $d_t$ is growing, the $\pi_t$ in the $L_1$ norm is the marginal distribution of $\theta^{(t-1)}$ given by
\begin{align}\label{eq:mar}
\pi_t(\theta^{(t-1)})=\int_{\bbR^{d_t-d_{t-1}}}\pi_t(\theta^{(t-1)},\eta_t)d\eta_t.
\end{align}
The sequential Monte Carlo sampler \citep{Moral2006} could also be cast into this framework if the jumping kernel $J_t$ is a random kernel that depends on $\Theta_{t-1}$. However, as Theorem \ref{thm:main} indicates, with sufficient iterations $m_t$ at each time point $t$, one can guarantee the convergence without the resampling step used in SMC algorithms as long as the posterior $\pi_t$ does not change too much in $t$.

As the mixture model example in section \ref{se:mixm} demonstrates, even in batch problems, the annealing effect of adding data sequentially will lead to substantial improvements over usual MCMC algorithms that incorporate all the data at once and sample serially. For streaming data problems, SMCMC avoids the need to restart the algorithm at each time point as new data arrive, and allows real time updating exploiting multiple processors and distributing the computational burden over time. For example, the SMCMC for nonparametric probit regression in section \ref{se:npr} has similar total computational burden as running MCMC chains in parallel using multiple processors. However, SMCMC distributes this burden over time, and one can extract current available information at any time point.  Moreover, the samples $\{\theta^{(m_t,t,l)}:l=1,\ldots,L\}$ within each time point are drawn from independent chains. This independence and the annealing effect can substantially boost efficiency of MCMC algorithms with poor mixing rates.

\subsection{Choice of $J_t$}\label{se:Jt}
We shall restrict the jumping kernel $J_t$ to be a pre-specified transition kernel that leaves $\theta^{(t-1)}$ unchanged by letting
\begin{align}\label{eq:cJt}
    P\big((\tilde{\theta}^{(t-1)},\eta_t)|\theta^{(t-1)}\big)=
    J_t\big(\theta^{(t-1)},(\tilde{\theta}^{(t-1)},\eta_t)\big)
    \delta_{\theta^{(t-1)}}(\tilde{\theta}^{(t-1)}),
\end{align}
where $\delta_x$ is the Dirac measure at $x$.
Otherwise, $J_t$ can always be decomposed into an updating of $\theta^{t-1}$ followed by a generation of $\eta_t$, where the former step can be absorbed into $T_{t-1}$. Henceforth, with slight abuse of notation, the jumping kernel $J_t$ will be considered as a map from $\bbR^{d_{t-1}}$ to $\bbR^{d_t-d_{t-1}}$, mapping $\theta^{(t-1)}$ to $\eta_t$.

Intuitively, if $\theta^{(t-1)}$ is approximately distributed as $\pi_t(\theta^{(t-1)})$ and $\eta_t$ is sampled from the conditional posterior $\pi_t(\eta_t|\theta^{(t-1)})$, then $(\theta^{(t-1)},\eta_t)$ is approximately distributed as
\[
\pi_{t}(\theta^{(t-1)},\eta_t)=\pi_t(\theta^{(t-1)})\pi_t(\eta_t|\theta^{(t-1)}),
 \]
the exact posterior distribution.  This observation is formalized in Lemma \ref{le:Jt} in section \ref{se:ipd}, suggesting that the jumping kernel $J_t$ should be chosen close to full conditional $\pi_t(\eta_t|\theta^{(t-1)})$ at time $t$. Two types of $J_t$ can be used:
\begin{enumerate}
  \item \emph{Exact conditional sampling.} When draws from the full conditional $\pi_t(\eta_t|\theta^{(t-1)})$ can be easily sampled, $J_t$ can be chosen as this full conditional. For example, $\pi_t(\eta_t|\theta^{(t-1)})$ can be recognized as some standard distribution. Even when $\pi_t(\eta_t|\theta^{(t-1)})$ is unrecognizable, if $d_t-d_{t-1}$ is small, then we can apply the accept-reject algorithm \citep{robert2004} or slice sampler \citep{Neal2003}.
  \item \emph{Approximate conditional sampling.} When sampling from the full conditional of $\eta_t$ is difficult, we can use other transition kernels, such as blocked Metropolis-Hastings (MH) or inter-woven MH or Gibbs steps chosen to have $\pi_t(\eta_t|\theta^{(t-1)})$ as the stationary distribution.
\end{enumerate}

Theorem \ref{thm:mainb} in section \ref{se:ipd} provides an explicit expression about the impact of
\[
\lambda_t=\sup_{\theta^{(t-1)}\in\bbR^{d_{t-1}}}||\pi_t
(\cdot|\theta^{(t-1)})-J_t(\theta^{(t-1)},\cdot)||_1
\]
on the approximation error of $\pi_t$, which basically requires $\lambda_t\to 0$ as $t\to \infty$. To achieve $\lambda_t\to 0$, one can run the transition kernel in approximate conditional sampling case for an increasing number of iterations as $t$ grows. However, we observe good practical performances for a fixed small number of iterations.

\subsection{Choice of $T_t$}\label{se:Tt}
Lemma \ref{le:le2} in section \ref{se:pre} suggests that a good $T_t(\theta,\theta')$ should be close to $\pi_t(\theta')$. The transition kernel $T_t$ can be chosen as in usual MCMC algorithms.  For example, $T_t$ can be the transition kernel associated with blocked or inter-weaved MH or Gibbs samplers.  For conditionally conjugate models, it is particularly convenient to use Gibbs and keep track of conditional sufficient statistics to mitigate the increase in storage and computational burden over time.

\subsection{Choice of $m_t$}\label{se:mt}
The number of samples in each chain per time point, $m_t$, should be chosen to be small enough to meet the computational budget while being large enough so that the difference between the distribution of samples in $\Theta_t$ and the posterior distribution $\pi_t$ goes to zero. Formal definitions of difference and other concepts will be given in the next section. Intuitively, for a given $t$, if the Markov chain with transition kernel $T_t$ has slow mixing or there are big changes in $\pi_t$ from $\pi_{t-1}$, then $m_t$ should be large. Theorem \ref{thm:main} in section \ref{se:cmt} provides explicit bounds on the approximation error as a function of $m_t$'s. Moreover, for a given $\epsilon\in(0,1)$, Theorem \ref{thm:main} implies that if we select $m_t$ to be the minimal integer $k$ such that
\begin{align*}
    r_t(k)\leq 1-\epsilon,
\end{align*}
where $r_t$ is the rate function associated with $T_t$ defined in \eqref{eq:ue},
then the distribution of $\Theta_t$ converges to $\pi_t$ as $t\to\infty$ under the assumption that $||\pi_t-\pi_{t-1}||_1\to 0$. Typical rate functions can be chosen as $r_t(k)=\rho^k$, for some $\rho^k$.
Since the rate functions $r_t$ relate to the unknown mixing rate of the Markov chain with transition kernel $T_t$, we estimate them in an online manner.

To estimate $r_t$ we utilize the relationship between the mixing rate of a Markov chain and its autocorrelation function.  By comparing \eqref{eq:nor} and \eqref{eq:corr} in section \ref{se:semt}, the decay rate of the autocorrelation function provides an upper bound for the mixing rate. Therefore, we can bound the rate function $r_t(k)$ with the lag-$k$ autocorrelation function
\begin{align*}
    f_t(k)=\max_{j=1,\ldots,p} \text{ corr} (X_{j}^{(k)},X_j^{(0)}),
\end{align*}
where $(X_j^{(1)},\ldots,X_j^{(p)})$ is the $p$-dimensional sample in the $k$th step of the Markov chain with transition kernel $T_t$.

For a single Markov chain, the common choice of estimating $f_t(k)$ by the sample average of lag-$k$ differences over the steps from $s=s_1,\ldots,s_2$ as
\begin{align*}
    \tilde{f}_t(k)=\max_{j=1,\ldots,p}\frac{\sum_{s=s_1}^{s_2}(X_j^{(s)}-\bar{X}_j)
    (X_j^{(s-k)}-\bar{X}_j)}
    {\sum_{s=s_1}^{s_2}(X_j^{(s)}-\bar{X}_j)^2},
\end{align*}
where $\bar{X}_j=\sum_{s=s_1}^{s_2}X_j^{(s)}/(s_2-s_1+1)$, could have large bias even though $s_2-s_1$ is large. The reason is that for slow mixing Markov chains, the samples tend to be stuck in local modes, leading to high variation of $\tilde{f}_t(k)$'s with $X_j^{(s)}$ starting from different regions. Within these local modes, $\tilde{f}_t(k)$ might decay fast, inappropriately suggesting good mixing.
In our algorithm, we have $L$ chains running independently in parallel. Hence, instead of averaging over time, we can estimate the autocorrelation function $f_t(k)$ by averaging across the independent chains as
\begin{align*}
    \hat{f}_t(k)=\max_{j=1,\ldots,p}\frac{\sum_{l=1}^L(X_j^{(k,l)}-\bar{X}_j^{(k)})
    (X_j^{(0,l)}-\bar{X}_j^{(0)})}
    {\big(\sum_{l=1}^L(X_j^{(k,l)}-\bar{X}_j^{(k)})^2\big)^{1/2}
    \big(\sum_{l=1}^L(X_j^{(0,l)}-\bar{X}_j^{(0)})^2\big)^{1/2}},
\end{align*}
where $X_j^{(k,l)}$ is the $j$th component of the sample in the $k$th step of the $l$th chain and $\bar{X}_j^{(k)}=\sum_{l=1}^{L}X_j^{(k,l)}/L$ is the ensemble average of the draws in the $k$th step across the $L$ Markov chains. $\hat{f}_t$ will be more robust than $\tilde{f}_t$ to local modes. Although by Slutsky's theorem, both estimators are asymptotically unbiased as $s_2-s_1\to\infty$ and $L\to\infty$ respectively, the convergence of $\tilde{f}_t$ might be much slower than that of $\hat{f}_t$ due to potential high correlations among the summands in $\tilde{f}_t$.

In our case, the estimator $\hat{f}_t(k)$ takes the form of
\begin{align}\label{eq:ftk}
    \hat{f}_t(k)=\max_{j=1,\ldots,p}\frac{\sum_{l=1}^L(\theta_j^{(k+1,t,l)}-\bar{\theta}_j^{(k+1,t)})
    (\theta_j^{(1,t,l)}-\bar{\theta}_j^{(1,t)})}
    {\big(\sum_{l=1}^L(\theta_j^{(k+1,t,l)}-\bar{\theta}_j^{(k+1,t)})^2\big)^{1/2}
    \big(\sum_{l=1}^L(\theta_j^{(1,t,l)}-\bar{\theta}_j^{(1,t)})^2\big)^{1/2}},
\end{align}
where $\bar{\theta}_j^{(k,t)}=\sum_{l=1}^{L}\theta_j^{(k,t,l)}/L$ is the $j$th component of the ensemble average of the draws across the $L$ Markov chains in the $k$th step at time $t$. For each $t>0$, we choose $m_t$ to be the minimal integer $k$ such that the sample autocorrelation decreases below $1-\epsilon$, i.e.
\begin{align*}
    m_t=\min\{k:\hat{f}_t(k)\leq 1-\epsilon\}.
\end{align*}
In practice, we can choose $\epsilon$ according to the full sample size $n$ and error tolerance $\epsilon_T$ based on Theorem \ref{thm:main}. For example, for small datasets with $n\sim 10^2$, we recommend $\epsilon=0.5$ and for large datasets, $\epsilon$ such that
\begin{align*}
    \sum_{t=1}^{n}\frac{\epsilon^{n+1-t}}{\sqrt{t}}\leq\epsilon_T,
\end{align*}
where $t^{-1/2}$ is a typical rate for $||\pi_t-\pi_{t-1}||_1$
for regular parametric models (Lemma \ref{le:le3}).
To summarize, Algorithm \ref{al:1} provides pseudo code for SMCMC.
\begin{algorithm*}
\caption{Sequential Markov Chain Monte Carlo}
\label{al:1}
\begin{algorithmic}
 \STATE  $m_0\leftarrow1$
 \FOR{$l=1$ \TO $L$ } \STATE{Draw $\theta^{(1,0,l)}\sim \pi_0$} \ENDFOR
 \FOR{$t=1$ \TO $n$ }
 \STATE  $m_t\leftarrow1$
 \STATE  $\rho\leftarrow1$
 \FOR{$l=1$ \TO $L$ } \STATE{Draw $[\eta^{(t,l)}\ |\ \theta^{(m_{t-1},t-1,l)}]\sim J_t(\theta^{(m_{t-1},t-1,l)},\cdot)$}
 \STATE{$\theta^{(1,t,l)}\leftarrow(\theta^{(m_{t-1},t-1,l)},\eta^{(t,l)})$} \ENDFOR
 \WHILE{$\rho>1-\epsilon$}
 \STATE{$m_t\leftarrow m_t+1$}
 \FOR{$l=1$ \TO $L$ }
 \STATE{ Draw $[\theta^{(m_t,t,l)}\ |\ \theta^{(m_t-1,t,l)}]\sim T_t(\theta^{(m_t-1,t,l)},\cdot)$}
 \ENDFOR
 \STATE{ Calculate $\hat{f}_t(m_t-1)$ by \eqref{eq:ftk}}
 \STATE{ $\rho\leftarrow\hat{f}_t(m_t-1)$}
 \ENDWHILE
 \STATE{$\Theta_t\leftarrow\{\theta^{(m_t,t,l)}:l=1,\ldots,L\}$}
 \ENDFOR
\end{algorithmic}
\end{algorithm*}

All the loops for $l$ in the above algorithm can be computed in parallel. Assuming the availability of a distributed computing platform with multiple processors, Algorithm 1 has comparable computational complexity to running MCMC in parallel on $L$ processors starting with the full data at time $t$.  The only distributed operation is computation of $\hat{f}_t$, which can be updated every $s_0$ iterations to reduce communication time.  Moreover, the $t$ loop can be conducted whenever $t_0$ ($>1$) new data points accrue, rather than as each data point arrives, as long as $||\pi_t-\pi_{t-t_0}||\to 0$ as $t\to\infty$. More generally, for any sequence $t_1<t_2<\ldots<t_{k_0}=n$ such that $||\pi_{t_k}-\pi_{t_{k-1}}||\to 0$ as $k\to\infty$, the loop for $t$ can be changed into ``for $k=1$ to $k_0$ do $t\leftarrow t_k\ \ldots$ end for''. Since the posterior $\pi_t$ is expected to vary slower as $t$ grows, the batch sizes $t_{k}-t_{k-1}$ can be increasing in $k$, leading to faster computations. To avoid the SMCMC becoming too complicated, we shall restrict our attention to Algorithm \ref{al:1} in the rest of the paper.

\section{Convergence results}
In this section, we study the convergence properties of SMCMC as $t\to \infty$. We first review some convergence results for Markov chains and then prove some new properties. We then apply these tools to study SMCMC convergence.

\subsection{Convergence of Markov chain}\label{se:pre}
Denote the state space by $\mathcal{X}$ and the Borel $\sigma$-algebra on $\mathcal{X}$ by $\mathcal{B(\mathcal{X})}$. For a transition kernel $T(x,y)$, we can recursively define its $t$-step transition kernel by
\[
T^t(x,y)=\int T^{t-1}(x,z)T(z,y)dz.
\]
Similarly, given an initial density $p_0$, we will denote by $T^t\circ p_0$ the probability measure evolved after $t$th steps with transition kernel $T$ from the initial distribution $p_0$, which can be related to $T^t$ by
\begin{align*}
    T^t\circ p_0(x)=\int T^t(z,x)p_0(z)dz.
\end{align*}

A transition kernel $T$ is called uniformly ergodic if there exists a distribution $\pi$ and a sequence $r(t)\rightarrow 0$, such that for all $x$,
\begin{align}\label{eq:ue}
    ||T^t(x,\cdot)-\pi||_1\leq r(t),
\end{align}
where $||\cdot||_1$ is the $L_1$ norm. $r(t)$ will be called the rate function. If $T$ is ergodic, then $\pi$ in the definition will be the stationary distribution associated with $T$. Uniformly ergodic implies geometric convergence, where $r(t)=\rho^t$ for some $\rho\in(0,1)$ \citep{Meyn1993}.

We call a transition kernel $T$ universally ergodic if there exists a distribution $\pi$ and a sequence $r(t)\rightarrow 0$, such that for any initial distribution $p_0$,
\begin{align*}
    ||T^t\circ p_0-\pi||_1\leq r(t)||p_0-\pi||_1.
\end{align*}
$r(t)$ will also be called rate function. The concept of universal ergodicity plays an important role in the following study of the convergence properties of SMCMC. By choosing $p_0$ as a Dirac measure at $x$, one can see that universal ergodicity implies uniform ergodicity with rate function $2r(t)$. In addition, universal ergodicity can provide tighter bounds on the MCMC convergence than uniform ergodicity especially when the initial distribution $p_0$ is already close to $\pi$. The following lemma provides the converse. The proof is based on coupling techniques.

\begin{lemma}\label{le:le1}
If a transition kernel $T$ is uniformly ergodic with rate function $r(t)$, then it is universally ergodic with the same rate function.
\end{lemma}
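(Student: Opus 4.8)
The plan is to reduce the universal-ergodicity bound to the uniform one by decomposing the mean-zero signed measure $p_0-\pi$ into its positive and negative parts, which is the measure-theoretic incarnation of a maximal coupling of $p_0$ and $\pi$. The starting observation is that $\pi$ is stationary, so $T^t\circ\pi=\pi$ and hence $T^t\circ p_0-\pi=T^t\circ(p_0-\pi)$; the entire task is therefore to control how $T^t$ acts on the signed measure $g:=p_0-\pi$, which has total mass zero.

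First I would form the Jordan decomposition $g=g_+-g_-$ with $g_\pm$ nonnegative and mutually singular. Since $\int g=0$, the two pieces carry equal mass $a:=\int g_+=\int g_-=\tfrac12\|p_0-\pi\|_1$ (these residual measures are exactly those appearing in the maximal coupling). If $a=0$ then $p_0=\pi$ and the claim is trivial, so assume $a>0$ and set the probability densities $q_\pm:=g_\pm/a$. By linearity $T^t\circ(p_0-\pi)=a\,(T^t\circ q_+-T^t\circ q_-)$, so that $\|T^t\circ p_0-\pi\|_1=a\,\|T^t\circ q_+-T^t\circ q_-\|_1$.

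The key intermediate step is to upgrade the hypothesis, which controls $T^t(x,\cdot)$ only for each fixed start $x$, into a bound valid for an arbitrary initial distribution $q$. Writing $T^t\circ q-\pi=\int\big(T^t(z,\cdot)-\pi\big)q(z)\,dz$, the triangle inequality (Jensen) together with Fubini gives $\|T^t\circ q-\pi\|_1\le\int\|T^t(z,\cdot)-\pi\|_1\,q(z)\,dz\le r(t)$, since each inner norm is at most $r(t)$ by uniform ergodicity. Applying this to $q_+$ and $q_-$ and combining with one more triangle inequality yields $\|T^t\circ q_+-T^t\circ q_-\|_1\le\|T^t\circ q_+-\pi\|_1+\|T^t\circ q_--\pi\|_1\le 2r(t)$, whence $\|T^t\circ p_0-\pi\|_1\le 2a\,r(t)=r(t)\,\|p_0-\pi\|_1$, as required.

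I do not expect a serious obstacle, since the argument is short. The one point that needs care, and the reason the rate function is preserved with no loss, is the bookkeeping of the constant: the factor $2$ from the triangle inequality is cancelled exactly by $a=\tfrac12\|p_0-\pi\|_1$, so any cruder estimate of $\|T^t\circ q_+-T^t\circ q_-\|_1$ would forfeit this cancellation and give only a weaker constant. The other detail worth stating explicitly is the Fubini/Jensen passage from the pointwise-in-$x$ hypothesis to the distributional bound, as this is precisely where the uniform (rather than merely pointwise) nature of the rate function is used.
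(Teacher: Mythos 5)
Your proof is correct, and it recovers the paper's bound with exactly the same constant. The difference from the paper is one of machinery rather than substance: the paper argues probabilistically, constructing an explicit coupling of two chains, one started from $p_0$ and one from $\pi$, glued together at time $0$ with probability $1-\delta$ where $\delta=\tfrac12||p_0-\pi||_1$, and then run jointly; your Jordan-decomposition residuals $q_{\pm}=(p_0-\pi)_{\pm}/a$ are precisely the conditional laws of the two starting points given the uncoupled event, and the common mass $\min\{p_0,\pi\}$ that your linearity step cancels is exactly the mass the coupling preserves. Both arguments then rest on the same key estimate—integrating the pointwise bound $||T^t(x,\cdot)-\pi||_1\le r(t)$ against an arbitrary initial density (the paper's inequality \eqref{eq:total}, your Fubini/Jensen step)—and on the same cancellation of the triangle-inequality factor $2$ against $a=\tfrac12||p_0-\pi||_1$. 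What your route buys: it is shorter and purely analytic, dispensing with the need to verify that the coupling is well defined (that the residual is a genuine density, and that the second chain has marginal $\pi$ at all times). What the paper's route buys: the coupling construction is not a one-off but the template reused in Lemma \ref{le:le2}, Lemma \ref{le:none}, and especially Theorem \ref{thm:main}, where tracking the event that the chains have not yet coupled across time-varying kernels yields the sharper bound with factors $\epsilon_u(1-\alpha_u)$; a purely linear recursion in your style corresponds to the cruder estimate \eqref{eq:dimi} and the weaker conclusion (with an extra factor of $2$ and no $(1-\alpha_u)$ terms) stated after that theorem.
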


\begin{proof}
We will use coupling method. Denote $\delta=\frac{1}{2}||p_0-\pi||_1$. Let $\{X_t:t\geq0\}$ and $\{X'_t:t\geq0\}$ be two Markov chains defined as follows:
\begin{enumerate}
  \item $X_0\sim p_0$;
  \item Given $X_0=x$, with probability $\min\{1,\frac{\pi(x)}{p_0(x)}\}$, set $X'_0=x$;
      with probability $1-\min\{1,\frac{\pi(x)}{p_0(x)}\}$, draw
      \[
      X'_0\sim\frac{\pi(\cdot)-\min\{\pi(\cdot),p_0(\cdot)\}}{\delta};
      \]
  \item For $t\geq1$, if $X_0=X_0'$, draw $X_t=X'_t\sim T(X_{t-1},\cdot)$, else draw $X_t$ and $X'_t$ independently from $X_t\sim T(X_{t-1},\cdot)$ and $X'_t\sim T(X'_{t-1},\cdot)$ respectively.
\end{enumerate}
Note that $\frac{\pi(\cdot)-\min\{\pi(\cdot),p_0(\cdot)\}}{\delta}$ is a valid probability density since: 1. it is nonnegative; 2. its integral on $\mathcal{X}$ is equal to one by the definition of $\delta$.

From the above construction, it is easy to see that the marginal distribution of $X_t$ is $T^t\circ p_0$. Next we will prove that the marginal distribution of $X'_t$ is $\pi$ for all $t$. Since the stationary distribution of $T$ is $\pi$, we only need to show that the marginal distribution of $X'_0$ is $\pi$. First,
\begin{equation}\label{eq:diff}
\begin{aligned}
    P(X_0= X'_0)=&\int \min\{1,\frac{\pi(x)}{p_0(x)}\} p_0(x)dx\\
    =&\int \min\{p_0(x),\pi(x)\}dx\\
    =&1-\delta.
\end{aligned}
\end{equation}
Then, for any $A\in\mathcal{B(\mathcal{X})}$,
\begin{align*}
    P(X'_0\in A)=&P(X'_0\in A,X_0\neq X'_0)+P(X'_0\in A,X_0= X'_0)\\
    =&P(X'_0\in A|X_0\neq X'_0)P(X_0\neq X'_0)+\int_AP(X_0=X'_0|X'_0=x)P(X'_0=x)dx\\
    =&\delta \int_A \frac{\pi(x)-\min\{\pi(x),p_0(x)\}}{\delta}dx\
    +\int_A\min\bigg\{1,\frac{\pi(x)}{p_0(x)}\bigg\}p_0(x)dx\\
    =&\int_A\pi(x)dx.
\end{align*}

By uniform ergodicity, for any probability measure $p$, we have
\begin{equation}\label{eq:total}
\begin{aligned}
    ||T^t\circ p-\pi||_1=&\int\bigg|\int T^t(z,x)p(z)dz-\int \pi(x) p(z)dz\bigg|dx\\
    \leq &\int||T^t(z,\cdot)- \pi(\cdot)||_1p(z)dz\\
    \leq & r(t).
\end{aligned}
\end{equation}

By the above inequality, \eqref{eq:diff} and our construction of $X_t$ and $X'_t$, for any $A\in\mathcal{B(\mathcal{X})}$, we have
\begin{align*}
    |T^t\circ p_0(A)-\pi(A)|=&|P(X_t\in A)-P(X'_t\in A)|\\
    = & |P(X_0\neq X'_0, X_t\in A)-P(X_0\neq X'_0, X'_t\in A)|\\
    \leq & P(X_0\neq X'_0)\ \big\{|P(X_t\in A|X_0\neq X'_0)-\pi(A)|\\
    &+|P(X'_t\in A|X_0\neq X'_0)-\pi(A)|\big\}\\
    \leq & \delta r(t),
\end{align*}
where the last line follows by the fact that $||p-q||_1=2\sup_{A}|p(A)-q(A)|$ and \eqref{eq:total} with $p(\cdot)=P(X_0=\cdot|X_0\neq X'_0)$ and $p(\cdot)=P(X'_0=\cdot|X_0\neq X'_0)$. Therefore,
\begin{align*}
    ||T^t\circ p_0-\pi||_1=2\sup_A |T^t\circ p_0(A)-\pi(A)|\leq r(t)||p_0-\pi||_1.
\end{align*}
\end{proof}

The coupling in the proof of Lemma \ref{le:le1} is constructed through importance weights. By using the same technique, we can prove the uniform ergodicity for certain $T$ as in the following lemma.

\begin{lemma}\label{le:le2}
If the transition kernel $T$ satisfies
\begin{align}
    \sup_x||T(x,\cdot)-\pi||_1\leq 2\rho, \label{eq:ul1}
\end{align}
for some $\rho<1$, then $T$ is uniformly ergodic with rate function $r(t)=\rho^t$.
\end{lemma}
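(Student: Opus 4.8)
The plan is to adapt the importance-weight coupling used in the proof of Lemma~\ref{le:le1}, but now applied repeatedly across the steps of the chain rather than only at time $0$. Fix an arbitrary starting point $x$ and run a chain $\{X_s\}_{s\ge 0}$ with $X_0=x$ and transition kernel $T$, so that $X_s\sim T^s(x,\cdot)$. I would couple this with a companion chain $\{X_s'\}_{s\ge 0}$ that is marginally stationary, $X_s'\sim\pi$ for every $s$, and then invoke the coupling inequality $\|T^s(x,\cdot)-\pi\|_1 = \|\mathrm{Law}(X_s)-\mathrm{Law}(X_s')\|_1 \le 2\,P(X_s\neq X_s')$. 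The whole problem then reduces to showing that the non-coalescence probability $P(X_s\neq X_s')$ decays geometrically in $s$ at the rate dictated by $\rho$.

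The first step is to restate the hypothesis \eqref{eq:ul1} in the form that feeds the coupling. Since $\|p-q\|_1 = 2\bigl(1-\int \min\{p,q\}\bigr)$ for densities $p,q$, the bound $\sup_x\|T(x,\cdot)-\pi\|_1\le 2\rho$ is equivalent to the overlap bound $\int \min\{T(x,y),\pi(y)\}\,dy \ge 1-\rho$ for every $x$. This is exactly the quantity controlling a maximal (importance-weight) coupling: from any current state $a$, one can draw the next state of the $T$-chain together with a $\pi$-distributed variable so that they agree with probability $\int\min\{T(a,y),\pi(y)\}\,dy \ge 1-\rho$, using the same split into a ``matched'' part proportional to $\min\{T(a,\cdot),\pi\}$ and a residual part as in \eqref{eq:diff}. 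I would build the joint law of $(X_s,X_s')$ step by step so that, as long as the two chains have not yet coalesced, each step attempts such a match and succeeds with conditional probability at least $1-\rho$; once they coalesce they are run together thereafter. A short induction then gives $P(X_s\neq X_s')\le \rho^{\,s}$, and combining with the coupling inequality yields $\|T^s(x,\cdot)-\pi\|_1\le 2\rho^{\,s}$ uniformly in $x$, which is the claimed uniform ergodicity with geometric rate governed by $\rho$.

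The delicate point, and the step I expect to be the main obstacle, is constructing the joint law so that the companion chain stays \emph{exactly} marginally $\pi$ at all times while the coalescence events compound. Matching a fresh $T$-transition to $\pi$ at a single step is immediate from the overlap bound, but after a successful match the shared state is distributed according to the normalized overlap $\min\{T(a,\cdot),\pi\}/\int\min\{T(a,y),\pi(y)\}\,dy$ rather than $\pi$, so one must verify that carrying the two chains forward together still preserves the stationary marginal of $X_s'$; this is where the stationarity of $\pi$ under $T$, namely $\int \pi(\theta)\,T(\theta,\theta')\,d\theta=\pi(\theta')$, has to be used carefully. The accompanying bookkeeping of the factor of $2$, with the hypothesis phrased through $2\rho$ in the $L_1$ norm while the per-step coalescence probability is $1-\rho$, is what pins down the precise constant in the rate function, and I would track it explicitly through the induction to land on the stated $r(t)$.
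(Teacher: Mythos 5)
Your proposal is correct and takes essentially the same route as the paper: the paper's proof runs a companion chain started from $\pi$ that always evolves by $T$ (so stationarity of $\pi$ preserves its marginal, which is exactly how the delicate point you flag gets resolved), and at each uncoupled step lets the $x$-chain attempt an importance-weight match to the companion's new state with acceptance probability $\min\{1,T(x,y)/\pi(y)\}$, giving per-step coalescence probability at least $1-\rho$, after which induction and the coupling inequality finish the argument. The only cosmetic difference is the constant: under the paper's convention $\|p-q\|_1=2\sup_A|p(A)-q(A)|$, the coupling honestly yields your bound $2\rho^t$, and the paper's stated rate $r(t)=\rho^t$ simply absorbs that factor of $2$ loosely, so your bookkeeping concern does not indicate a gap in your approach.
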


\begin{proof}
Let $\delta(x)=\frac{1}{2}||T(x,\cdot)-\pi||_1\leq \rho$.
Given an initial point $x$, we can construct two Markov chains $\{X_t:t\geq0\}$ and $\{X'_t:t\geq0\}$ as follows:
\begin{enumerate}
  \item $X_0=x$, $X'_0\sim \pi$;
  \item For $t\geq1$, given $X_{t-1}=x$ and $X'_{t-1}=x'$,
     \begin{enumerate}
       \item[(a)] if $x=x'$, choose $X_t=X'_t\sim T(x,\cdot)$;
       \item[(b)] else, first choose $X'_t=y\sim T(x',\cdot)$,
       then with probability $\min\{1,\frac{T(x,y)}{\pi(y)}\}$, set $X_t=y$,
       with probability $1-\min\{1,\frac{T(x,y)}{\pi(y)}\}$, draw
      \[
      X_t\sim\frac{T(x,\cdot)-\min\{T(x,\cdot),\pi(\cdot)\}}{\delta(x)};
      \]
     \end{enumerate}
\end{enumerate}
Then similar to the proof of Lemma \ref{le:le1}, the above procedure is valid and the two Markov chains $X_t$ and $X'_t$ have the same transition kernel $T$, but have initial distribution $\delta_x$ and $\pi$, respectively.
Moreover,
\begin{align*}
P(X_t\neq X'_t|X_{1},X'_{1},\ldots,X_{t-1},X'_{t-1})\leq& \sup_x \bigg\{1-\int\min\{1,\frac{T(x,y)}{\pi(y)}\}\pi(y)dy\bigg\}\\
=& \sup_x\delta(x)\leq \rho.
\end{align*}
Therefore, we have
\begin{align*}
   ||T^t(x,\cdot)-\pi||_1\leq P(X_1\neq X'_1,\ldots,X_t\neq X'_t)\leq \rho^t.
\end{align*}
\end{proof}

Note that condition \eqref{eq:ul1} in the above lemma is weaker than the minorization condition \citep{Meyn1993} for proving uniform ergodicity with rate function $r(t)=\rho^t$. Minorization condition assumes that there exists a probability measure $\nu$ such that,
\begin{align}
    T(x,y)\geq (1-\rho) \nu(y), \forall x,y\in\mathcal{X}.\label{eq:mino}
\end{align}
In practice, there is no rule on how to choose such measure $\nu$. To see that \eqref{eq:ul1} is weaker, first note that if \eqref{eq:mino} holds, then by the stationarity of $\pi$,
\begin{align*}
    \pi(y)=\int T(x,y)\pi(x)dx\geq (1-\rho)\nu(y)\int \pi(x)dx=(1-\rho)\nu(y).
\end{align*}
Therefore, for any $x\in\mathcal{X}$, we have
\begin{align*}
    ||T(x,\cdot)-\pi||_1\leq & ||T(x,\cdot)-(1-\rho)\nu||_1+||\pi-(1-\rho)\nu||_1\\
    =& \int \big[T(x,y)-(1-\rho)\nu(y)\big]dy+\int \big[\pi(y)-(1-\rho)\nu(y)\big]dy\\
    =& 1-(1-\rho)+1-(1-\rho)=2\rho.
\end{align*}
Therefore, condition \eqref{eq:ul1} can lead to a tighter MCMC convergence bound than the minorization condition. Using $\sup_x||T(x,\cdot)-\pi||_1$ in \eqref{eq:ul1} also provides a tighter bound than using the Dobrushin coefficient $\beta(T)=\sup_{x,y}||T(x,\cdot)-T(y,\cdot)||_1$, which is another tool used in studying the Markov chain convergence rate via operator theory. In fact, for any set $A\subset\mathcal{X}$
\begin{align*}
    \sup_x|T(x,A)-\pi(A)|=&\sup_x\bigg|\int_A\bigg\{\int_{\mathcal{X}}\pi(y)
    \big[T(x,z)-T(y,z)\big]dy\bigg\}dz\bigg|\\
    \leq &\int_A\bigg\{\int_{\mathcal{X}}\pi(y)\big|T(x,z)-T(y,z)\big|dy\bigg\}dz\\
    \leq &\ \beta(T) \pi(A),
\end{align*}
which implies that $\sup_x||T(x,\cdot)-\pi||_1\leq \beta(T)$.
Moreover, comparing to the minorization condition and Dobrushin coefficient, \eqref{eq:ul1} has a more intuitive explanation that the closer the transition kernel $T(x,\cdot)$ is to the stationary distribution, the faster the convergence of the Markov chain. Ideally, if $T(x,\cdot)=\pi(\cdot)$ for all $x\in\mathcal{X}$, then the Markov chain converges in one step. The converse of Lemma \ref{le:le2} is also true as shown in the following lemma, which implies that condition \eqref{eq:ul1} is also necessary for uniform ergodicity.

\begin{lemma}
If $T$ is uniformly ergodic, then there exists $\rho\in(0,1)$, such that
\begin{align}
    \sup_x||T(x,\cdot)-\pi||_1\leq 2\rho.\label{eq:ob}
\end{align}
\end{lemma}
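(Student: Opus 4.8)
The plan is to read the one-step bound \eqref{eq:ob} directly off the definition of uniform ergodicity, using the geometric form of the rate function. Write $a_t=\sup_x||T^t(x,\cdot)-\pi||_1$ for the worst-case $L_1$ deviation of the $t$-step kernel from $\pi$; by definition of uniform ergodicity $a_t\le r(t)$, and, as recalled above \citep{Meyn1993}, the rate can be taken geometric, $r(t)=\rho_0^t$ with $\rho_0\in(0,1)$. The target \eqref{eq:ob} is exactly the assertion $a_1\le 2\rho$ for some $\rho<1$, so the whole question is how to pin down the first-step deviation $a_1$. The clean route is to specialise the geometric bound to $t=1$: this gives $a_1\le r(1)=\rho_0<1$, and since $\rho_0\le 2\rho_0$ the conclusion \eqref{eq:ob} holds with $\rho=\rho_0\in(0,1)$, which is the converse of Lemma \ref{le:le2} in the sharpest sense — a genuinely contracting one-step kernel is both necessary and sufficient for geometric uniform ergodicity.

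Before committing to that one line, I would secure the only nontrivial point: that the geometric rate really may be taken with no multiplicative constant exceeding one at $t=1$. Two observations frame the difficulty. First, $a_t$ is nonincreasing: from stationarity $\pi=\int T(x,dy)\,\pi$ one has $T^{t+1}(x,\cdot)-\pi=\int T(x,dy)\,[T^t(y,\cdot)-\pi]$, whence $a_{t+1}\le\sup_y||T^t(y,\cdot)-\pi||_1=a_t$ by the triangle inequality for the $L_1$ norm. Thus $a_1$ is the \emph{largest} of the $a_t$, so the long-run smallness of $a_t$ does not by itself force $a_1<2$; the monotonicity runs the wrong way. Second, and consequently, the real content of the lemma is that uniform ergodicity forces the worst-case one-step deviation strictly below $2$, equivalently a uniform overlap $\inf_x\int\min\{T(x,y),\pi(y)\}\,dy>0$. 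I expect establishing this first-step contraction, rather than the asymptotic decay, to be the main obstacle.

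I would therefore carry out the argument in one of two ways. The clean way is to invoke the geometric-rate characterisation quoted above, $a_t\le\rho_0^t$, and evaluate at $t=1$ as in the first paragraph, under which the result is immediate. The more cautious way, should one only have $a_t\le C\rho_0^t$ with a constant $C\ge 1$, is to choose the smallest $t_0$ with $r(t_0)\le 1$, so that $\sup_x||T^{t_0}(x,\cdot)-\pi||_1\le 2\cdot\tfrac12$, and to record \eqref{eq:ob} for the $t_0$-step kernel $T^{t_0}$ in place of $T$; this is the form actually needed when feeding the bound back into Lemma \ref{le:le2} downstream, since the chain may always be run in blocks of $t_0$ steps. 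Either way, the single delicate step is the passage from the asymptotic rate to a first- (or $t_0$-) step contraction constant below $2$.
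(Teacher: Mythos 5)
Your proposal does not establish the lemma, and the step you yourself flag as ``the single delicate step'' is in fact the entire content of the statement. Writing, as you do, $a_t=\sup_x||T^t(x,\cdot)-\pi||_1$, Route 1 takes the background assertion that the rate function may be chosen in the pure geometric form $r(t)=\rho_0^t$ and evaluates it at $t=1$. But $a_1\le\rho_0<1$ is (a strengthening of) exactly the inequality \eqref{eq:ob} to be proved, so this is circular: what the general theory in \cite{Meyn1993} actually yields is $a_t\le C\rho_0^t$ with a finite constant $C\ge1$ (equivalently $a_t\le 2(1-\epsilon)^{\lfloor t/m\rfloor}$ from an $m$-step Doeblin/small-set condition), and at $t=1$ such bounds are vacuous, as you note. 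Route 2, your fallback, proves \eqref{eq:ob} only for a skeleton $T^{t_0}$ rather than for $T$ itself; that is a genuinely weaker statement than the lemma, whatever its downstream utility when iterating Lemma \ref{le:le2} in blocks.

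Your suspicion that the one-step contraction cannot be recovered from the asymptotic decay alone is correct, and in fact under the definition \eqref{eq:ue} alone the gap is unbridgeable. Take disjoint intervals $C_n,B_n$ ($n\ge1$) and a bulk interval $K$, and set $T(x,\cdot)=\mathrm{Unif}(B_n)$ for $x\in C_n$, $T(x,\cdot)=\mathrm{Unif}(K)$ for $x\in B_n$, and $T(x,\cdot)=(1-\delta)\,\mathrm{Unif}(K)+\delta\sum_{n\ge1}2^{-n}\,\mathrm{Unif}(C_n)$ for $x\in K$. Then $T^3(x,\cdot)\ge(1-\delta)^3\,\mathrm{Unif}(K)$ for every $x$, so the chain is uniformly ergodic (irreducible and aperiodic as well); yet the stationary mass of $B_n$ is of order $2^{-n}$, so the overlap of $T(x,\cdot)$ with $\pi$ for $x\in C_n$ tends to zero, $\sup_x||T(x,\cdot)-\pi||_1=2$, and no $\rho<1$ satisfies \eqref{eq:ob}. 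Any proof must therefore input more than \eqref{eq:ue}, and this is where the paper's argument takes a route different from both of yours: it quotes Theorem 1.3 of \cite{Mengersen1996} as supplying the \emph{one-step} minorization \eqref{eq:mino}, and then concludes via the triangle-inequality computation displayed after Lemma \ref{le:le2}, which converts \eqref{eq:mino} into \eqref{eq:ob} with the same $\rho$. Your worry about the multiplicative constant $C$ is precisely the one-step-versus-$m$-step small-set issue concealed in that citation: the general characterization of uniform ergodicity produces minorization only for some power $T^m$ --- which is your $t_0$-skeleton statement --- so the lemma as stated stands or falls with reading the cited theorem as giving $m=1$, an assumption your counexample-sensitive instincts were right to distrust.
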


\begin{proof}
By Lemma \ref{le:le1}, $T$ is uniformly ergodic. Therefore by Theorem 1.3 in \cite{Mengersen1996}, \eqref{eq:mino} holds for some $\rho\in(0,1)$ and probability measure $\nu$. Then by the arguments after Lemma \ref{le:le2},  \eqref{eq:ob} holds with the same $\rho$.
\end{proof}

When the condition \eqref{eq:ul1} does not hold, we can still get a bound by applying the above coupling techniques. More specifically, assume $\{X_t:t\geq0\}$ is a Markov chain with state space $\mathcal{X}$, transition kernel $T$ and initial distribution $p_0$ over $\mathcal{X}$. Recall that $\pi$ is the stationary measure associated with $T$. We define an accompanied transition kernel $T'$ as
\begin{align*}
    T'(x,y)=\frac{T(x,y)-\min\{T(x,y),\pi(y)\}}{\delta(x)},
\end{align*}
where $\delta(x)=\frac{1}{2}||T(x,\cdot)-\pi||_1$. Let $\{X'_t:t\geq0\}$ be another Markov chain with state space $\mathcal{X}$, transition kernel $T'$ and the same initial distribution $p_0$. The following lemma characterizes the convergence of $X_t$ via $\tilde{X}_t$.

\begin{lemma}\label{le:none}
With the above notations and definitions, we have the following result:
\begin{align*}
    ||T^t\circ p_0-\pi||_1\leq E\big\{\prod_{s=1}^t\delta(X'_s)\big\}.
\end{align*}
\end{lemma}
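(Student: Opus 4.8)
The plan is to extend the coupling construction used in Lemmas \ref{le:le1} and \ref{le:le2}, but instead of bounding each one-step non-coupling probability by a uniform constant $\rho$, I would retain the exact state-dependent factor $\delta(x)$ and show that the product of these factors, accumulated along the trajectory, is governed precisely by the accompanied kernel $T'$. The starting point is the decomposition
\begin{align*}
T(x,\cdot)=\min\{T(x,\cdot),\pi\}+\delta(x)\,T'(x,\cdot),
\end{align*}
in which $\min\{T(x,\cdot),\pi\}$ is a sub-probability measure of total mass $1-\delta(x)$ (since $\int\min\{T(x,y),\pi(y)\}\,dy=1-\tfrac12\|T(x,\cdot)-\pi\|_1=1-\delta(x)$) and $T'(x,\cdot)$ is a genuine probability kernel (its normalization follows because $\int\max\{T(x,y)-\pi(y),0\}\,dy=\delta(x)$). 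Points $x$ with $\delta(x)=0$ satisfy $T(x,\cdot)=\pi$ and couple immediately, so $T'$ may be defined arbitrarily there while contributing a zero factor; I would dispose of this degenerate case at the outset.

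Next I would build a coupling $(X_s,Y_s)_{s\ge0}$ with $X_0\sim p_0$ and $Y_0\sim\pi$, both chains run with kernel $T$ and kept equal once they meet. While the chains are still apart, given $X_{s-1}=x$ I would draw $Y_s\sim T(Y_{s-1},\cdot)$ (so that $Y_s\sim\pi$ marginally, by stationarity of $\pi$), set $X_s=Y_s$ with probability $\min\{1,T(x,Y_s)/\pi(Y_s)\}$, and otherwise draw $X_s\sim T'(x,\cdot)$. Exactly as in Lemma \ref{le:le2}, averaging the acceptance probability over $Y_s\sim\pi$ gives $\int\min\{\pi(y),T(x,y)\}\,dy=1-\delta(x)$, which simultaneously confirms that $X_s$ has the correct one-step law $T(x,\cdot)$ and shows that the conditional probability of failing to couple at step $s$ equals $\delta(X_{s-1})$.

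The key observation is that, conditioned on the chains never having coupled, the $X$-marginal evolves exactly according to the accompanied kernel $T'$, since every non-coupling move is a draw from $T'(X_{s-1},\cdot)$. Consequently the law of the never-coupled trajectory $X_0\to X_1\to\cdots$ is that of the residual chain $\{X'_s\}$, and multiplying the per-step survival probabilities along this trajectory yields
\begin{align*}
P(X_t\ne Y_t)=P(\text{no coupling through time }t)=E\Big\{\prod_{s=1}^{t}\delta(X'_s)\Big\}.
\end{align*}
Finally, the coupling inequality already invoked in Lemmas \ref{le:le1} and \ref{le:le2}, namely $\|\mathrm{law}(X_t)-\mathrm{law}(Y_t)\|_1\le P(X_t\ne Y_t)$ together with $\mathrm{law}(X_t)=T^t\circ p_0$ and $\mathrm{law}(Y_t)=\pi$, delivers the claimed bound.

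I expect the main obstacle to be the bookkeeping that makes the coupling faithful: one must verify that the $X$-chain retains the marginal kernel $T$ even though its update is defined by accepting the stationary proposal $Y_s$, and that on the non-coupling event the residual dynamics are exactly $T'$ with the correct accumulation of $\delta$-factors (including pinning down the precise range of the product index). This is the same delicate point as in Lemma \ref{le:le2}, where correctness of the $X$-marginal hinges on $Y_{s-1}$ being distributed as $\pi$; here it must be tracked jointly with the multiplicative survival weights, and the handling of states with $\delta(x)=0$ must be folded in cleanly.
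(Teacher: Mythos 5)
Your proposal is correct in the same sense and takes essentially the same route as the paper's own proof: the paper likewise reuses the coupling of Lemma \ref{le:le2}, observes that before coupling the first chain evolves according to the accompanied kernel $T'$ while each step fails to couple with probability $\delta(\cdot)$, and reads off the bound as the expected product of these survival factors, merely packaging the non-coupling event as absorption into a ``coffin'' state (the trapping model). The delicate points you flagged are exactly the ones the paper also leaves implicit---in particular, your construction (like the paper's) really produces survival factors at the \emph{pre-move} positions, i.e.\ $E\big\{\prod_{s=0}^{t-1}\delta(X'_s)\big\}$, so the index range in the displayed bound is off by one in both your write-up and the paper's own statement.
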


\begin{proof}
We construct a new Markov chain $\{\tilde{X}_t:t\geq0\}$ as follows:
\begin{enumerate}
  \item The state space of $\tilde{X}_t$ is $\tilde{\mathcal{X}}=\mathcal{X}\cup\{c\}$, where $c$ is an extended ``coffin'' state.
  \item For $t>0$: if $\tilde{X}_{t-1}\neq c$, then with probability $\delta(x)$, $\tilde{X}_t=X'_t$ and with probability $1-\delta(x)$, $\tilde{X}_t=c$; if $\tilde{X}_{t-1}=c$, then $\tilde{X}_t=c$. Therefore, $c$ is an absorbing state.
  \item $\tilde{X}_0$ is distributed according to $p_0$.
\end{enumerate}
Then by identifying the coupling $(X_t=X'_t)$ in the proof of Lemma \ref{le:le2} as going to the absorbing state $c$, we have
\[
 ||T^t\circ p_0-\pi||_1\leq P(\tilde{X}_t\neq c)=E\big\{\prod_{s=1}^t\delta(X'_s)\big\},
\]
since before being coupled, $X'_t$ in the proof of Lemma \ref{le:le2} is a Markov chain with transition kernel $T'$.
\end{proof}

The Markov chain $\tilde{X}_t$ in the above proof is known as the trapping model in physics, where before getting trapped, a particle moves according to the transition kernel $T'$ on $\mathcal{X}$ and every time the particle moves to a new location $y$, with probability $1-\delta(y)$, it will be trapped there forever. Generally, the upper bound in Lemma \ref{le:none} is not easy to compute. However, under the drift condition and an analogue of local minorization assumption \citep{Rosenthal1995}, we can obtain an explicit quantitative bound for MCMC convergence as indicated by the following theorem. The proof is omitted here, which is a combination of the result in Lemma \ref{le:none} and the proof of Theorem 5 in \cite{Rosenthal1995}.

\begin{theorem}\label{prop:p1}
Suppose a Markov chain has transition kernel $T$ and initial distribution $p_0$. Assume the following two conditions:
\begin{enumerate}
  \item (Analogue of local minorization condition) There exists a subset $C\subset\mathcal{X}$, such that for some $\rho<1$,
      \begin{align*}
        \sup_{x\in C}||T(x,\cdot)-\pi||_1\leq 2\rho.
      \end{align*}
  \item (Drift condition) There exist a function $V:\mathcal{X}\rightarrow[1,\infty)$ and constant $b$ and $\lambda\in(0,1)$, such that for all $x\in \mathcal{X}$,
      \begin{align*}
        \int T(x,z)V(z)dz\leq \lambda V(x)+b1_{C}(x).
      \end{align*}
\end{enumerate}
Then for any $j$, $1\leq j\leq t$,
\begin{align*}
    ||T^t\circ p_0-\pi||_1\leq \rho^j+\lambda^t B^{j-1}\bar{V},
\end{align*}
where $B=1+{b}/{\lambda}$ and $\bar{V}=\int V(z)p_0(z)dz$.
\end{theorem}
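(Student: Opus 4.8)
The plan is to start from the representation provided by Lemma~\ref{le:none}: since $\|T^t\circ p_0-\pi\|_1\le E\{\prod_{s=1}^t\delta(X'_s)\}$, it suffices to bound this expected ``survival weight'' of the trapping chain, where $\delta(x)=\tfrac12\|T(x,\cdot)-\pi\|_1$. The two hypotheses enter through two elementary facts about $\delta$: the local minorization analogue (condition~1) gives $\delta(x)\le\rho$ for $x\in C$, while trivially $\delta(x)\le 1$ for every $x$. Thus along any trajectory the product picks up a factor at most $\rho$ each time the chain sits in $C$ and a factor at most $1$ otherwise, so that $\prod_{s=1}^t\delta(X'_s)\le\rho^{N}$, where $N$ counts visits to $C$. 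The whole argument then reduces to trading off ``many visits to $C$'', which makes $\rho^{N}$ small, against ``few visits to $C$'', whose probability I will control by the drift condition --- exactly the bookkeeping in the proof of Theorem~5 of \cite{Rosenthal1995}.

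First I would recast the survival weight as a multiplicative functional of the honest chain governed by $T$. Writing $\tilde S(x,dy)=(T(x,y)-\pi(y))^+\,dy$, one has $\tilde S(x,\cdot)\le T(x,\cdot)$ pointwise, $\tilde S(x,\mathcal{X})=\delta(x)$, and $\tilde S(x,dy)=\delta(x)\,T'(x,dy)$, so iterating $\tilde S$ reproduces the survival weight up to an endpoint adjustment (the factor $\delta(X'_t)\le 1$ and a single initial $T'$-step). The key gain is that, because $\tilde S\le T$, the kernel $\tilde S$ inherits the drift inequality $\int V(y)\,\tilde S(x,dy)\le\int V(y)\,T(x,dy)\le\lambda V(x)+b\,\mathbf 1_C(x)$. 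This is the step where Lemma~\ref{le:none} and Rosenthal's drift argument are glued together: the killing that produces the survival weight only shrinks the path measure, so the drift-based excursion estimates valid for $T$ remain valid for the killed dynamics.

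Next I would run Rosenthal's supermartingale estimate on the $T$-chain. Using $V\ge 1$ and $\lambda B=\lambda+b$ with $B=1+b/\lambda$, the drift condition gives $E[V(X_{s+1})\mid X_s=x]\le\lambda V(x)$ off $C$ and $\le\lambda B\,V(x)$ on $C$; hence the process $W_s=\lambda^{-s}B^{-\#\{i<s:\,X_i\in C\}}V(X_s)$ is a supermartingale, so that $E[W_t]\le E[W_0]=\bar V$. Since $V\ge 1$, this reads $E[B^{-N}]\le\lambda^t\bar V$ with $N$ the number of visits to $C$ before time $t$, and a one-line Markov inequality on $\{N\le j-1\}$ (where $B^{-N}\ge B^{-(j-1)}$) yields $P(N<j)\le\lambda^t B^{j-1}\bar V$. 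Finally I would assemble the two pieces, bounding the survival weight by $\rho^j$ on $\{N\ge j\}$ and by $1$ on $\{N<j\}$,
\[
E\Big\{\prod_{s=1}^t\delta(X'_s)\Big\}\le\rho^{j}\,P(N\ge j)+P(N<j)\le\rho^{j}+\lambda^t B^{j-1}\bar V,
\]
which is the claimed bound.

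The hard part, and where genuine care is needed, is the gluing in the second paragraph: reconciling the fact that the survival weight is generated by the accompanied/killed kernel (the $T'$ or $\tilde S$ dynamics) with the drift condition, which is stated only for $T$. The clean route is the pointwise domination $\tilde S\le T$, together with keeping the visit-counting and the survival-weight decay coupled along the same trajectory rather than factoring them apart --- the accompanied kernel $T'$ itself is \emph{not} dominated by $T$, so one must argue at the level of $\tilde S$. The remaining issues --- the off-by-one between the product $\prod_{s=1}^t$ and the supermartingale run over $X_0,\dots,X_{t-1}$, and the harmless endpoint factor $\delta(X'_t)\le 1$ --- only shift $j$ by a constant and are absorbed into the stated constants.
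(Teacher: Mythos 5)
Your proposal follows exactly the route the paper indicates for its (omitted) proof: reduce to the survival weight of Lemma~\ref{le:none} and then run the drift/supermartingale bookkeeping of Theorem~5 of \cite{Rosenthal1995}, with condition~1 supplying a factor $\rho$ per visit to $C$. Your two central observations --- that the killed kernel $\tilde S(x,dy)=(T(x,y)-\pi(y))^+dy=\delta(x)T'(x,dy)$ is dominated by $T$ and hence inherits the drift inequality, and that the visit count must stay coupled to the survival weight (since the drift condition does \emph{not} transfer to $T'$) --- are precisely what makes that combination rigorous, and your supermartingale computation with $W_s=\lambda^{-s}B^{-N_s}V(X_s)$ and $\lambda B=\lambda+b$ is correct.

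The one place your argument does not hold as written is the claim that the initial $T'$-step is ``absorbed into the stated constants'' or only ``shifts $j$ by a constant.'' It is not a constant-size perturbation: since $T'$ is not dominated by $T$, one only has
\begin{align*}
\int T'(x_0,dy)\,V(y)\;\leq\;\frac{\lambda V(x_0)+b\,\mathbf{1}_C(x_0)}{\delta(x_0)},
\end{align*}
which is uncontrolled as $\delta(x_0)\to 0$. Consequently, if you keep the arrival-indexed product $\prod_{s=1}^t\delta(X'_s)$ of Lemma~\ref{le:none} literally, your drift recursion must be started from $p_0\circ T'$ rather than $p_0$, and it produces $\int (T'\circ p_0)(x)V(x)dx$ in place of $\bar V$, which is not the stated bound. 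The clean repair is to note that the coupling behind Lemmas~\ref{le:le1} and~\ref{le:le2} fails to couple at each move with probability $\delta$ evaluated at the \emph{departure} state, so the survival weight is really $E\{\prod_{s=0}^{t-1}\delta(X'_s)\}$; with this indexing it equals exactly the total mass of $p_0$ propagated through $t$ iterates of $\tilde S$, with no stray $T'$-step and no trailing $\delta(X'_t)$, and your own killed-chain supermartingale argument then closes with exactly $\rho^j+\lambda^tB^{j-1}\bar V$. So the architecture is right and matches the paper's intent; only this endpoint must be handled by fixing the indexing, not by waving it into the constants.
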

By optimizing the $j$ in the above theorem, we can obtain the following geometrically decaying bound on $||T^t\circ p_0-\pi||_1$, which is similar to \cite{Rosenthal1995}:
\begin{align*}
    ||T^t\circ p_0-\pi||_1\leq \bar{V}\tilde{\rho}^t,
    \text{with }\log\tilde{\rho}=\frac{\log\rho\log\lambda}{\log\rho-\log B}.
\end{align*}
This implies that the Markov chain with transition kernel $T$ is geometrically ergodic. Recall that a chain is geometrically ergodic if there is $\rho<1$, and constants $C_x$ for each $x\in\mathcal{X}$, such that for $\pi-$a.e. $x\in\mathcal{X}$,
\[
||T^t(x,\cdot)-\pi(\cdot)||_1\leq C_x\rho^t.
\]

\subsection{Convergence of sequential MCMC}\label{se:not}
SMCMC generates $L$ time-inhomogeneous Markov chains. To investigate its asymptotic properties, we need a notion of convergence. Existing literature on the convergence of MCMC or adaptive MCMC focuses on the case when the stationary distribution does not change with time. A nonadaptive MCMC algorithm is said to be converging if
\begin{align}\label{eq:L1a}
    ||Q^t\circ p_0-\pi||_1\rightarrow 0,\ \ \text{as } t\rightarrow\infty,
\end{align}
 where $||\cdot||$ is the $L_1$ norm, $Q$ is the time homogeneous transition kernel, $p_0$ is the initial distribution and $\pi$ is the unique stationary measure. However, for sequential MCMC, both the stationary distribution $\pi_t$ and the transition kernel $Q_t$ is changing over time. As an extension of \eqref{eq:L1a}, a stationary-distribution-varying Markov chain is said to be convergent if
 \begin{align}\label{eq:L1b}
    ||Q_t\circ\cdots\circ Q_1\circ p_0-\pi_t||_1\rightarrow 0,\ \ \text{as } t\rightarrow\infty.
\end{align}
In our case, $Q_t=T_t^{m_t}\circ J_t$, where $T_t$, $J_t$ and $m_t$ are defined in section \ref{se:al}.

\subsubsection{Constant parameter dimension $d_t$}\label{se:cmt}
We first focus on the case when the parameter size is fixed, i.e. $J_t$ is the identity map. The following theorem provides guarantees for the convergence of SMCMC under certain conditions. We will use the convention that
$$
\sum_{\emptyset}=0\text{ and }\prod_{\emptyset}=1.
$$

\begin{theorem}\label{thm:main}
Use the notations in section \ref{se:not}. Assume the following conditions:
\begin{enumerate}
  \item (Universal ergodicity) There exists $\epsilon_t\in(0,1)$, such that for all $t>0$ and $x\in\mathcal{X}$,
      \begin{align*}
        ||T_t(x,\cdot)-\pi_t||\leq 2\rho_t.
      \end{align*}
  \item (Stationary convergence) The stationary distribution $\pi_t$ of $T_t$ satisfies
      \[
      \alpha_t=\frac{1}{2}||\pi_{t}-\pi_{t-1}||_1\to 0.
      \]
\end{enumerate}
Let $\epsilon_t=\rho_t^{m_t}$.
Then for any initial distribution $\pi_0$,
\begin{align*}
    ||Q_t\circ\cdots\circ Q_1\circ \pi_0-\pi_t||_1\leq& \sum_{s=1}^t\bigg\{\prod_{u=s+1}^t\epsilon_u(1-\alpha_u)\bigg\}\epsilon_s\alpha_s.
\end{align*}
\end{theorem}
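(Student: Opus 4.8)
The plan is to reduce the claim to a one–step recursion and then unroll it. Write $p_t=Q_t\circ\cdots\circ Q_1\circ\pi_0$ for the law of the ensemble after time $t$, so that $p_t=Q_t\circ p_{t-1}$ with $Q_t=T_t^{m_t}$ (the jumping kernel being the identity in this regime), and set $\Delta_t=\|p_t-\pi_t\|_1$; since the chains are initialised exactly at $\pi_0$ we have $\Delta_0=0$. Denote the right-hand side by $D_t$. Splitting off the $s=t$ term and factoring $\epsilon_t(1-\alpha_t)$ out of the remaining ones shows that $D_t$ satisfies
\[
D_t=\epsilon_t\big[(1-\alpha_t)D_{t-1}+\alpha_t\big],\qquad D_0=0,
\]
(using the conventions $\sum_\emptyset=0$, $\prod_\emptyset=1$). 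Hence it suffices to establish the one-step inequality $\Delta_t\le\epsilon_t\big[(1-\alpha_t)\Delta_{t-1}+\alpha_t\big]$; the theorem then follows by induction, since $x\mapsto\epsilon_t[(1-\alpha_t)x+\alpha_t]$ is nondecreasing and so $\Delta_{t-1}\le D_{t-1}$ propagates to $\Delta_t\le D_t$.

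For the one-step bound I would combine two ingredients. First, the within-time mixing: by Lemma \ref{le:le2}, condition 1 makes each $T_t$ uniformly ergodic, and by Lemma \ref{le:le1} this upgrades to universal ergodicity, so that $Q_t=T_t^{m_t}$ contracts toward its stationary law $\pi_t$ by the factor $\epsilon_t=\rho_t^{m_t}$. Second, the change of target from $\pi_{t-1}$ to $\pi_t$, whose magnitude is exactly $\alpha_t=\tfrac12\|\pi_t-\pi_{t-1}\|_1$, which I would control by a maximal coupling of $\pi_{t-1}$ and $\pi_t$. Concretely, I would maintain inductively a coupling of a $Q_t$-chain started from $p_{t-1}$ with a chain started from $\pi_t$, in which the two initial states are matched through the intermediate $\pi_{t-1}$: on the overlap of $\pi_{t-1}$ and $\pi_t$ (mass $1-\alpha_t$) the ideal sample is left unchanged, so the only discrepancy inherited there is the old error, contributing the surviving term $(1-\alpha_t)\Delta_{t-1}$, while the complementary mass $\alpha_t$ injects a fresh discrepancy. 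Running the coalescence coupling of Lemma \ref{le:le2} for the $m_t$ transitions of $T_t$ multiplies the probability of remaining uncoupled by $\epsilon_t$, and then $\Delta_t\le\epsilon_t\big[(1-\alpha_t)\Delta_{t-1}+\alpha_t\big]$.

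The hard part will be establishing the one-step inequality in \emph{precisely} this form, with the multiplicative survival factor $(1-\alpha_t)$ rather than the crude $\Delta_t\le\epsilon_t(\Delta_{t-1}+2\alpha_t)$ that the triangle inequality $\|p_{t-1}-\pi_t\|_1\le\|p_{t-1}-\pi_{t-1}\|_1+\|\pi_{t-1}-\pi_t\|_1$ yields at once. The obstruction is that a maximal coupling of $p_{t-1}$ with $\pi_t$ alone can do no better than contracting $\mathrm{TV}(p_{t-1},\pi_t)$, and this can strictly exceed $\alpha_t+(1-\alpha_t)\mathrm{TV}(p_{t-1},\pi_{t-1})$; the improvement must therefore come from the genuinely three-way coupling of $p_{t-1}$, $\pi_{t-1}$ and $\pi_t$, in which the $\pi_{t-1}$–$\pi_t$ overlap is reused so that no new error is manufactured where the target has effectively not moved. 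The delicate point is showing that the inherited-error event $\{U_{t-1}=W_{t-1}\}$ and the target-kept event combine multiplicatively, i.e. that the conditional keep-probability on the overlap is at least $1-\alpha_t$; since the common value on $\{U_{t-1}=W_{t-1}\}$ is distributed as the normalised overlap of $p_{t-1}$ and $\pi_{t-1}$, this is exactly the step that forces a careful coupling construction and attention to the $L_1$-versus-total-variation normalisation carried over from Lemmas \ref{le:le1}–\ref{le:le2}. I expect this coupling bookkeeping, rather than the algebra of unrolling the recursion, to be the real content of the proof.
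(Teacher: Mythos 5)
Your proposal is, in substance, the paper's own proof: the paper constructs exactly the inductively maintained pair of chains you describe in your second paragraph — an ``algorithm'' chain and an ``ideal'' chain with marginal $\pi_t$, matched at each time boundary by the importance-ratio refresh of Lemma \ref{le:le1} (keep the common value $x$ with probability $\min\{1,\pi_t(x)/\pi_{t-1}(x)\}$, otherwise redraw from the normalized residual of $\pi_t$ over $\pi_{t-1}$) and, within each time block, evolved by the coalescence coupling of Lemma \ref{le:le2} — and then unrolls the recursion $P_t\le\epsilon_t\bigl[\alpha_t+(1-\alpha_t)P_{t-1}\bigr]$ exactly as in your first paragraph. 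One correction to your framing, though: the induction cannot be run on $\Delta_t=\|p_t-\pi_t\|_1$ as your opening paragraph suggests. As your own third paragraph notes, the one-step inequality $\Delta_t\le\epsilon_t[(1-\alpha_t)\Delta_{t-1}+\alpha_t]$ is false as a statement about distributions (knowing only $\Delta_{t-1}$, the triangle bound $\epsilon_t(\Delta_{t-1}+2\alpha_t)$ is sharp), so the quantity that propagates is the disagreement probability $P_t=P(X_{t,m_t}\neq X'_{t,m_t})$ of the maintained coupling, which dominates the total variation $\tfrac12\Delta_t$ but is not determined by it; this is what the paper inducts on.

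The substantive point is that the ``delicate point'' you flag is real, and the paper does not actually resolve it either. Conditionally on the two chains agreeing at the end of time $t-1$, the common value is distributed as the normalized agreement measure $A(dz)=P(X_{t-1,m_{t-1}}=X'_{t-1,m_{t-1}}\in dz)$, which in the inductively built coupling is \emph{not} the normalized overlap of $p_{t-1}$ and $\pi_{t-1}$ (contrary to your parenthetical); it is only known to satisfy $A\le\min\{p_{t-1},\pi_{t-1}\}$. From that domination one gets only the unconditional bound
\begin{align*}
P(\text{agree at }t-1,\ \text{refresh decouples})=\int\bigl(1-\min\{1,\pi_t/\pi_{t-1}\}\bigr)\,dA\le\alpha_t,
\end{align*}
which yields the additive recursion $P_t\le\epsilon_t(P_{t-1}+\alpha_t)$ and hence only the weaker conclusion $2\sum_{s=1}^t\bigl(\prod_{u=s}^t\epsilon_u\bigr)\alpha_s$ — precisely the ``short proof'' bound the paper records immediately after the theorem. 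The multiplicative factor $(1-\alpha_u)$ requires the \emph{conditional} decoupling probability to be at most $\alpha_t$, i.e. $\int(1-\min\{1,\pi_t/\pi_{t-1}\})\,dA\le\alpha_t(1-P_{t-1})$; the paper simply asserts this (``the probability of $(X_{t,1}\neq X'_{t,1})$ is $\alpha_t$''), and it can fail for an agreement measure concentrated where $\pi_t<\pi_{t-1}$, since $A$ need not resemble $\pi_{t-1}$ conditionally. So your instinct that the coupling bookkeeping, not the unrolling algebra, is the real content is exactly right: your proposal stops at the same unproven step that the paper's proof glosses over, and as written both arguments rigorously deliver only the additive version of the bound (which still suffices for the convergence statements the paper draws from this theorem, since $\alpha_t\to0$).
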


\begin{proof}
We will construct two time inhomogeneous Markov chains $\{X_{t,s}:s=1,\ldots,m_t,\ t\geq0\}$ and $\{X'_{t,s}:s=1,\ldots,m_t,\ t\geq0\}$, where a double index is used as the step indicator under the following order $(0,1)\to\cdots\to(0,m_0)\to(1,1)\to\cdots\to(1,m_1)\to(2,1)\to\cdots\to(2,m_2)\to\cdots$.
Let $\delta_t(x)=\frac{1}{2}||T_t(x,\cdot)-\pi_t||_1$. The two chains are constructed as follows: (note that $m_0=1$)
\begin{enumerate}
  \item $X_{0,1}\sim \pi_0$, $X'_{0,1}\sim\pi_0$;
  \item For $t\geq1$,
  \begin{enumerate}
    \item $s=1$. Let $X_{t-1,m_{t-1}}=x$ and $X'_{t-1,m_{t-1}}=x'$. Set $X_{t,1}=x$. With probability $\min\{1,\frac{\pi_{t}(x)}{\pi_{t-1}(x)}\}$, set $X'_{t,1}=x$; with probability $1-\min\{1,\frac{\pi_{t}(x)}{\pi_{t-1}(x)}\}$, draw
          $$
          X'_{t,1}\sim\frac{\pi_t(\cdot)-\min\{\pi_t(\cdot),\pi_{t-1}(\cdot)\}}
          {\alpha_t};
          $$
    \item $1<s\leq m_t$. Let $X_{t,s-1}=x$ and $X'_{t,s-1}=x'$.
     \begin{enumerate}
       \item if $x=x'$, choose $X_{t,s}=X'_{t,s}\sim T_t(x,\cdot)$;
       \item else, first choose $X'_{t,s}=y\sim T_t(x',\cdot)$,
       then with probability $\min\{1,\frac{T_t(x,y)}{\pi_t(y)}\}$, set $X_{t,s}=y$,
       with probability $1-\min\{1,\frac{T_t(x,y)}{\pi_t(y)}\}$, draw
      \[
      X_{t,s}\sim\frac{T_t(x,\cdot)-\min\{T_t(x,\cdot),\pi_t(\cdot)\}}{\delta_t(x)};
      \]
     \end{enumerate}
  \end{enumerate}
\end{enumerate}
The above construction combines those in Lemma \ref{le:le1} and \ref{le:le2}. By the argument in the proof of Lemma \ref{le:le1}, the construction for $s=1$ is valid. Moreover, if $(X_{t-1,m_{t-1}}=X'_{t-1,m_{t-1}})$, then the probability of $(X_{t,1}\neq X'_{t,1})$ is $\alpha_t$. Similarly, by the argument in the proof of Lemma \ref{le:le2}, the construction for $s>1$ is valid. Moreover, conditioning on $X_{t,s-1}$ and $X'_{t,s-1}$, the conditional probability of $(X_{t,1}\neq X'_{t,1})$ does not exceed $\rho_t$.

It can been seen that the marginal distribution of $X_{t,s}$ is $T_t^s\circ Q_{t-1}\circ\cdots\circ Q_1\circ \pi_0$, while the marginal distribution of $X'_{t,s}$ is $\pi_t$, for $s=1,\ldots,m_t$. Therefore,
\begin{align*}
    ||Q_{t}\circ\cdots\circ Q_1\circ \pi_0-\pi_t||_1\leq P(X_{t,m_t}\neq X'_{t,m_t}).
\end{align*}
Furthermore, we have
\begin{align*}
    P(X_{t,m_t}\neq X'_{t,m_t})=&\ P(X_{t-1,m_{t-1}}\neq X'_{t-1,m_{t-1}},X_{t,m_t}\neq X'_{t,m_t})\\
    &+P(X_{t-1,m_{t-1}}= X'_{t-1,m_{t-1}},X_{t,m_t}\neq X'_{t,m_t})\\
    \leq&\  P(X_{t-1,m_{t-1}}\neq X'_{t-1,m_{t-1}})\rho_t^{m_t}\\
    &+\big[1-P(X_{t-1,m_{t-1}}\neq X'_{t-1,m_{t-1}})\big]{\alpha_t}\rho_t^{m_t}\\
    =&\ \alpha_t\epsilon_t+\epsilon_t(1-{\alpha_t})
    P(X_{t-1,m_{t-1}}\neq X'_{t-1,m_{t-1}})\\
    \leq&\ \cdots\ \leq\sum_{s=1}^t\bigg\{\prod_{u=s+1}^t\epsilon_u(1-\alpha_u)\bigg\}\epsilon_s\alpha_s.
\end{align*}

Combining the above two inequalities, the theorem can be proved.
\end{proof}

The above theorem has a short proof if the conclusion is weaken to
\begin{align*}
    ||Q_t\circ\cdots\circ Q_1\circ \pi_0-\pi_t||_1
     \leq &\ 2\sum_{s=1}^{t} \bigg\{\prod_{u=s}^t\epsilon_t\bigg\}\alpha_s\rightarrow0, \text{ as } n\rightarrow\infty.
\end{align*}
In fact, by the universally ergodicity condition and Lemma \ref{le:le2}, for all $t>0$ and any probability distribution $p$,
\begin{align}\label{eq:dimi}
    ||Q_t\circ p-\pi_t||_1=||T_t^{m_t}\circ p-\pi_t||_1\leq \epsilon_t||p-\pi_t||_1.
\end{align}
Therefore, by recursively applying \eqref{eq:dimi}, we have
\begin{align*}
    ||Q_t\circ\cdots\circ Q_1\circ \pi_0-\pi_t||_1\leq&\epsilon_t
    ||Q_{t-1}\circ\cdots\circ Q_1\circ \pi_0-\pi_t||_1\\
    \leq &\epsilon_t
    ||Q_{t-1}\circ\cdots\circ Q_1\circ \pi_0-\pi_{t-1}||_1+\epsilon_t||\pi_t-\pi_{t-1}||_1\\
    \leq &\cdots\leq \sum_{s=1}^{t} \bigg\{\prod_{u=s}^t\epsilon_t\bigg\}||\pi_{s}-\pi_{s-1}||_1.
\end{align*}

If $m_t$ in the algorithm is chosen large enough so that
\begin{align}\label{eq:Tm}
    \sup_{x}||T_t^{m_t}(x,\cdot)-\pi_t||_1\leq 2(1-\epsilon),
\end{align}
then
\begin{align*}
    ||Q_t\circ\cdots\circ Q_1\circ \pi_0-\pi_t||_1\leq&\sum_{s=1}^{t} \bigg\{\prod_{u=s}^t\epsilon_t\bigg\}\alpha_s\\
     \leq &\sum_{s=1}^{t} (1-\epsilon)^{t+1-s}\alpha_s\rightarrow0, \text{ as } n\rightarrow\infty.
\end{align*}

In practice, we can choose $m_t$ as in section \ref{se:mt}, which provides good approximations to \eqref{eq:Tm}.
Although $T_t$ are required to be universally ergodic in the theorem, it might be possible to weaken the conditions to those in Theorem \ref{prop:p1} with direct application of the coupling techniques in the proofs of Lemma \ref{le:le1} and Lemma \ref{le:le2}. The current paper only focuses on the universally ergodic case for conciseness and easy exhibition. Condition 2 is intuitively reasonable and can be verified for many problems. In this subsection, we provide such a verification for regular parametric cases in Lemma \ref{le:le3} below, where the Bernstein von-Mises theorem holds. In the next subsection when $d_t$ is allowed to grow in $t$, we provide a verification for general models that may not have $n^{-1/2}$ convergence rate or Gaussian limiting distributions; for example, nonparametric models.

\begin{lemma}\label{le:le3}
Suppose that we have a parametric model with regular likelihood function such that the following Berstein Von-Mises theorem holds:
\begin{align*}
    \big|\big|\pi_n-N\big(\hat{\theta}_n,\frac{1}{n}I^{-1}\big)\big|\big|_1\rightarrow 0, \text{ in probability},
\end{align*}
where $N(\mu,\Sigma)$ is the multivariate normal distribution with mean $\mu$ and covariance $\Sigma$, $\hat{\theta}_n$ is the maximum likelihood estimator, $\pi_n$ is the posterior distribution with $n$ observations $Y_1,\ldots,Y_n$ and $I$ is the Fisher information matrix. Moreover assume that $||\hat{\theta}_n-\hat{\theta}_{n-1}||=O_p(n^{-1})$, where $||\cdot||$ is the Euclidean norm.
If one observation is added at each time, so that time $t$ is the sample size $n$, then the stationary convergence condition in Theorem \ref{thm:main} holds.
\end{lemma}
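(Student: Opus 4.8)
The plan is to compare $\pi_n$ and $\pi_{n-1}$ through their Bernstein--von Mises Gaussian approximations and to control the distance between these two Gaussians directly. Writing $G_n=N(\hat{\theta}_n,\frac1n I^{-1})$, I would insert $G_n$ and $G_{n-1}$ via the triangle inequality,
\begin{align*}
\|\pi_n-\pi_{n-1}\|_1\leq \|\pi_n-G_n\|_1+\|G_n-G_{n-1}\|_1+\|G_{n-1}-\pi_{n-1}\|_1.
\end{align*}
By the assumed Bernstein--von Mises theorem the first and third terms tend to zero in probability, so the entire argument reduces to showing $\|G_n-G_{n-1}\|_1\to 0$ in probability.

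For the middle term I would bound the $L_1$ distance (which equals twice the total variation) between the two Gaussians by Pinsker's inequality, $\frac12\|G_n-G_{n-1}\|_1\leq\sqrt{\tfrac12\,D(G_n\,\|\,G_{n-1})}$, and evaluate the Kullback--Leibler divergence $D$ in closed form. With $\Sigma_n=\frac1n I^{-1}$ and means $\hat{\theta}_n$, the divergence decomposes into a trace term $\operatorname{tr}(\Sigma_{n-1}^{-1}\Sigma_n)-d=-d/n$, a log-determinant term $d\log\frac{n}{n-1}$, and a Mahalanobis term $(n-1)(\hat{\theta}_n-\hat{\theta}_{n-1})^{\top}I(\hat{\theta}_n-\hat{\theta}_{n-1})$. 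The first two combine to $d\big(\log\frac{n}{n-1}-\frac1n\big)=O(n^{-2})$ since $\log\frac{n}{n-1}=\frac1n+O(n^{-2})$, while the assumption $\|\hat{\theta}_n-\hat{\theta}_{n-1}\|=O_p(n^{-1})$ together with the fixed, positive-definite $I$ makes the Mahalanobis term $(n-1)\cdot O_p(n^{-2})=O_p(n^{-1})$. Hence $D(G_n\,\|\,G_{n-1})=O_p(n^{-1})$ and $\|G_n-G_{n-1}\|_1=O_p(n^{-1/2})$.

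Combining the three pieces gives $\alpha_n=\frac12\|\pi_n-\pi_{n-1}\|_1\to 0$ in probability, which is exactly the stationary convergence condition of Theorem~\ref{thm:main}; the $n^{-1/2}$ rate quoted in Section~\ref{se:mt} is precisely the Pinsker bound applied to an $O_p(n^{-1})$ divergence. I expect the only delicate point to be the near-cancellation of the trace and log-determinant contributions: it is this cancellation that reduces the covariance-mismatch error from $O(n^{-1})$ to $O(n^{-2})$, and it is exact only because $I$ is held fixed so that $G_n$ and $G_{n-1}$ share the same information matrix; the $O_p(n^{-1})$ control on $\hat{\theta}_n-\hat{\theta}_{n-1}$ is what keeps the mean shift from dominating the bound.
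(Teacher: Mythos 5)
Your proposal is correct, and it follows the same skeleton as the paper's own proof: insert the Bernstein--von Mises Gaussians $G_n$ and $G_{n-1}$ via the triangle inequality, let the BvM assumption annihilate the two outer terms, and compute the remaining Gaussian-to-Gaussian distance in closed form. The one genuine difference is the tool used for that middle term. The paper bounds $\|G_n-G_{n-1}\|_1$ through the Hellinger distance, invoking the explicit formula $H^2\big(N(\mu_1,\sigma_1^2),N(\mu_2,\sigma_2^2)\big)=1-\sqrt{2\sigma_1\sigma_2/(\sigma_1^2+\sigma_2^2)}\,e^{-(\mu_1-\mu_2)^2/\{4(\sigma_1^2+\sigma_2^2)\}}$ and checking that the resulting expression is $1-\big[1+O_p(n^{-1})\big]^{-1}$-type small; you instead use Pinsker's inequality together with the closed-form Kullback--Leibler divergence between Gaussians. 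Both routes give $\|G_n-G_{n-1}\|_1=O_p(n^{-1/2})$ and hence the stationary convergence condition. Your version has two modest advantages: it is written honestly in arbitrary dimension $d$ (the paper's Hellinger formula as stated is univariate and is applied to the multivariate $N(\hat{\theta}_n,n^{-1}I^{-1})$ somewhat informally), and your explicit identification of the trace/log-determinant cancellation, $d\big(\log\tfrac{n}{n-1}-\tfrac1n\big)=O(n^{-2})$, makes transparent the mechanism that in the paper is hidden inside the ratio $n^{1/4}(n-1)^{1/4}/(n-1/2)^{1/2}$. Two pedantic remarks: the Gaussian KL divergence carries an overall factor $\tfrac12$ that your decomposition drops (immaterial for the $O_p(n^{-1})$ conclusion), and, as in the paper, what you actually establish is $\alpha_n\to0$ \emph{in probability}, which is the sense in which the stationary convergence condition of Theorem \ref{thm:main} should be read here.
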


\begin{proof}
Note that the $L_1$ distance $||p-q||_1$ between any two densities $p$ and $q$ is bounded by $H(p,q)/\sqrt{2}$, where $H^2(p,q)=\int|\sqrt{p}-\sqrt{q}|^2$ is the square of the Hellinger distance. Moreover, for two normal distributions, $N(\mu_1,\sigma_1^2)$ and $N(\mu_2,\sigma_2^2)$, we have
\begin{align*}
    H^2\big(N(\mu_1,\sigma_1^2),N(\mu_2,\sigma_2^2)\big)=& 1-\sqrt{\frac{2\sigma_1\sigma_2}{\sigma_1^2+\sigma_2^2}}e^{-\frac{1}{4}
    \frac{(\mu_1-\mu_2)^2}{\sigma_1^2+\sigma_2^2}}.
\end{align*}
Therefore we have,
\begin{align*}
    ||\pi_n-\pi_{n-1}||_1\leq & \big|\big|\pi_n-N\big(\hat{\theta}_n,\frac{1}{n}I^{-1}\big)\big|\big|_1+
    \big|\big|\pi_{n-1}-N\big(\hat{\theta}_{n-1},\frac{1}{n-1}I^{-1}\big)\big|\big|_1\\
    &+\big|\big|N\big(\hat{\theta}_n,\frac{1}{n}I^{-1}\big)-
    N\big(\hat{\theta}_{n-1},\frac{1}{n-1}I^{-1}\big)\big|\big|_1\\
    =&o_p(1)+1-\frac{n^{1/4}(n-1)^{1/4}}{(n-1/2)^{1/2}}e^{-\frac{n(n-1)}{8n-4}
    O_p(1/n^2)}\\
    =&o_p(1)+o_p(1)=o_p(1).
\end{align*}
\end{proof}

Note that the condition $||\hat{\theta}_n-\hat{\theta}_{n-1}||=O_p(n^{-1})$ holds for regular parametric models. For simplicity, we illustrate this for a one dimensional case. Let $l(y,\theta)=\log P(y|\theta)$ be the log likelihood function, $\dot{l}(y,\theta)$ be its derivative with respect to $\theta$ and $\ddot{l}(y,\theta)$ its second order derivative. Applying a Taylor expansion of $\sum_{i=1}^n\dot{l}(Y_i,\theta)$ around $\hat{\theta}_{n-1}$, we obtain for $\theta$ in a small neighborhood around $\hat{\theta}_{n-1}$,
\begin{align*}
    \sum_{i=1}^n\dot{l}(Y_i,\theta)-\sum_{i=1}^n\dot{l}(Y_i,\hat{\theta}_{n-1})
    =\sum_{i=1}^n\ddot{l}(Y_i,\hat{\theta}_{n-1})(\theta-\hat{\theta}_{n-1})
    +O\big(n(\theta-\hat{\theta}_{n-1})^2\big)
\end{align*}
Plugging in $\theta$ with $\hat{\theta}_{n}$ and using $\sum_{i=1}^n\dot{l}(Y_i,\hat{\theta}_{n})=0$, $\sum_{i=1}^{n-1}\dot{l}(Y_i,\hat{\theta}_{n-1})=0$ and  $\sum_{i=1}^n\ddot{l}(Y_i,\hat{\theta}_{n-1})\rightarrow nI$ in probability, we obtain
\begin{align*}
    -\dot{l}(Y_n,\hat{\theta}_{n-1})
    =nI(\hat{\theta}_{n}-\hat{\theta}_{n-1})
    +o_p\big(n|\hat{\theta}_{n}-\hat{\theta}_{n-1}|\big).
\end{align*}
Finally we reach
\begin{align*}
 |\hat{\theta}_{n}-\hat{\theta}_{n-1}|=
 -[1+o_p(1)]\ (nI)^{-1}\ \dot{l}(Y_n,\hat{\theta}_{n-1})=O_p(n^{-1}).
\end{align*}

\subsubsection{Increasing parameter dimension $d_t$}\label{se:ipd}
Recall that the parameter at $t$ can be written as $\theta^{(t)}=(\theta^{(t-1)},\eta_{t})$. Consider the $J_t$ satisfying \eqref{eq:cJt} in section \ref{se:Jt} and $Q_t=J_t\circ T_t^{m_t}$.  The following lemma links the approximation errors before and after applying the jumping kernel $J_t$.
\begin{lemma}\label{le:Jt}
For any probability density $p(\cdot)$ for $\theta^{(t-1)}$, the following holds:
\begin{align*}
    ||\pi_{t}-J_t\circ p||_1\leq &||\pi_{t-1}-p||_1
    +\sup_{\theta^{(t-1)}\in\bbR^{d_{t-1}}}||\pi_{t}(\cdot|\theta^{(t-1)})-J_t(
    \theta^{(t-1)},\cdot)||_1,
\end{align*}
where the $\pi_t$ in the second term of the right hand side stands for the marginal posterior of $\theta^{(t-1)}$ at time $t$.
\end{lemma}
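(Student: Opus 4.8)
The plan is to prove the bound by the standard total-variation factorization argument: insert a single well-chosen ``mixed'' density and split via the triangle inequality. Throughout I would write $\theta=\theta^{(t-1)}$ and $\eta=\eta_t$ to lighten notation, and let $\bar{\pi}_t(\theta)=\int\pi_t(\theta,\eta)\,d\eta$ denote the marginal posterior of $\theta^{(t-1)}$ at time $t$ from \eqref{eq:mar}, which is the distribution the first term on the right-hand side compares against. By the convention \eqref{eq:cJt} and the abuse of notation introduced in Section \ref{se:Jt} (so that $J_t$ maps $\theta^{(t-1)}$ to $\eta_t$ while leaving $\theta^{(t-1)}$ unchanged), the density of $J_t\circ p$ at $(\theta,\eta)$ factorizes as $p(\theta)\,J_t(\theta,\eta)$, while the target factorizes as $\pi_t(\theta,\eta)=\bar{\pi}_t(\theta)\,\pi_t(\eta\mid\theta)$.

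First I would insert the mixed density $\bar{\pi}_t(\theta)\,J_t(\theta,\eta)$ and apply the triangle inequality pointwise, obtaining
\[
\big|\pi_t(\theta,\eta)-p(\theta)J_t(\theta,\eta)\big|
\le \bar{\pi}_t(\theta)\,\big|\pi_t(\eta\mid\theta)-J_t(\theta,\eta)\big|
+J_t(\theta,\eta)\,\big|\bar{\pi}_t(\theta)-p(\theta)\big|.
\]
Integrating this inequality over $(\theta,\eta)\in\bbR^{d_t}$ reduces the claim to bounding the two resulting integrals separately, and since all integrands are nonnegative the order of integration may be exchanged freely by Fubini.

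For the first integral I would perform the $\eta$-integration on the inside, recognizing $\int\big|\pi_t(\eta\mid\theta)-J_t(\theta,\eta)\big|\,d\eta=\|\pi_t(\cdot\mid\theta)-J_t(\theta,\cdot)\|_1$, bounding this by its supremum over $\theta^{(t-1)}\in\bbR^{d_{t-1}}$, and then pulling that constant out and using $\int\bar{\pi}_t(\theta)\,d\theta=1$; this yields exactly the second term of the stated bound. For the second integral I would integrate out $\eta$ first, using the fact that $J_t(\theta,\cdot)$ is a probability density so that $\int J_t(\theta,\eta)\,d\eta=1$; the integral then collapses to $\int\big|\bar{\pi}_t(\theta)-p(\theta)\big|\,d\theta=\|\bar{\pi}_t-p\|_1$, which is the first term.

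The argument is essentially bookkeeping, and the only point that requires care is the choice of the intermediate density: pairing the marginal factor $\bar{\pi}_t$ with the conditional $\pi_t(\eta\mid\theta)$ is precisely what lets the normalization $\int J_t(\theta,\cdot)\,d\eta=1$ decouple the two error sources cleanly, one attributable to the accuracy of $p$ as an approximation to the time-$t$ marginal of $\theta^{(t-1)}$ and the other to the accuracy of $J_t$ as an approximation to the full conditional $\pi_t(\eta_t\mid\theta^{(t-1)})$. The main (mild) obstacle is thus purely notational: keeping straight that the distance in the first term is against the time-$t$ marginal $\bar{\pi}_t$ from \eqref{eq:mar} rather than the joint, after which the rest is routine.
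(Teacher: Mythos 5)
Your proof is correct and follows essentially the same route as the paper's: both insert the intermediate density $\pi_t(\theta^{(t-1)})\,J_t(\theta^{(t-1)},\eta_t)$ (time-$t$ marginal times jumping kernel), split by the triangle inequality, and integrate the two pieces using $\int J_t(\theta^{(t-1)},\eta_t)\,d\eta_t=1$ and the normalization of the marginal. Your observation that the first term on the right-hand side should be the $L_1$ distance of $p$ to the time-$t$ marginal $\bar{\pi}_t$ of $\theta^{(t-1)}$ (rather than to $\pi_{t-1}$, as the displayed statement literally reads) also matches the paper's own proof, which derives exactly that bound.
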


\begin{proof}
By factorization of joint probability, we have
\begin{align*}
    ||\pi_{t}-J_t\circ\hat{\pi}_{t-1}||_1=&\int |\pi_{t}(\theta^{(t-1)})\pi_t(\eta_t|\theta^{(t-1)})-p(\theta^{(t-1)})
    J_t(\theta^{(t-1)},\eta_t)|d\theta^{(t-1)}d\eta_t\\
    \leq &\int \pi_{t}(\theta^{(t-1)})|\pi_t(\eta_t|\theta^{(t-1)})-
    J_t(\theta^{(t-1)},\eta_t)|d\eta_t d\theta^{(t-1)}\\
    &+\int |\pi_{t}(\theta^{(t-1)})-p(\theta^{(t-1)})|
    J_t(\theta^{(t-1)},\eta_t)d\theta^{(t-1)}d\eta_t\\
    \leq & \sup_{\theta^{(t-1)}\in\bbR^{d_{t-1}}}||\pi_{t}(\cdot|\theta^{(t-1)})-J_t(
    \theta^{(t-1)},\cdot)||_1+||\pi_{t}-p||_1.
\end{align*}
\end{proof}

If a Gibbs or slice sampling step is applied as $J_t$, then the last term in the above lemma vanishes. With Lemma \ref{le:Jt}, we can prove the following analogue of Theorem \ref{thm:main} for the increasing $d_t$ scenario.

\begin{theorem}\label{thm:mainb}
Assuming the following conditions:
\begin{enumerate}
  \item (Universal ergodicity) There exists $\epsilon\in(0,1)$, such that for all $t>0$ and $x\in\mathcal{X}$,
      \begin{align*}
        ||T_t(x,\cdot)-\pi_t||_1\leq 2\rho_t.
      \end{align*}
  \item (Stationary convergence) The stationary distribution $\pi_t$ of $T_t$ satisfies
      \[
      \alpha_t=\frac{1}{2}||\pi_{t}-\pi_{t-1}||_1\to 0,
       \]
       where $\pi_t$ is the marginal posterior of $\theta^{(t-1)}$ at time $t$ in $\alpha_t$.
  \item (Jumping consistency) For a sequence of $\lambda_t\to 0$, the following holds:
      \begin{align*}
      \sup_{\theta^{(t-1)}\in\bbR^{d_{t-1}}}||\pi_{t}(\cdot|\theta^{(t-1)})-J_t(
        \theta^{(t-1)},\cdot)||_1\leq 2\lambda_t.
      \end{align*}
\end{enumerate}
Let $\epsilon_t=\rho_t^{m_t}$. Then for any initial distribution $\pi_0$,
\begin{align*}
   ||Q_t\circ\cdots\circ Q_1\circ \pi_0-\pi_t||_1\leq \sum_{s=1}^t\bigg\{\prod_{u=s}^t\epsilon_u\bigg\}
   (\alpha_s+\lambda_s).
\end{align*}
\end{theorem}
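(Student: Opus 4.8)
The plan is to mirror the coupling construction in the proof of Theorem~\ref{thm:main}, inserting an extra coupling that accounts for the dimension increase performed by the jumping kernel $J_t$. I would run two time-inhomogeneous chains $\{X_{t,s}\}$ and $\{X'_{t,s}\}$ on $\bbR^{d_t}$, with $X_{t,s}$ following the SMCMC dynamics (the $J_t$ jump followed by the within-time $T_t$ updates) and $X'_{t,s}$ kept exactly at the current target $\pi_t$. Then $X_{t,m_t}$ has law $Q_t\circ\cdots\circ Q_1\circ\pi_0$ while $X'_{t,m_t}\sim\pi_t$, so that $||Q_t\circ\cdots\circ Q_1\circ\pi_0-\pi_t||_1\le P(X_{t,m_t}\neq X'_{t,m_t})$. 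Writing $q_t=P(X_{t,m_t}\neq X'_{t,m_t})$, the whole argument reduces to establishing the one-step recursion $q_t\le \epsilon_t\,(q_{t-1}+\alpha_t+\lambda_t)$; unrolling this with $q_0=0$ (take $X_{0,1}=X'_{0,1}\sim\pi_0$) gives precisely $\sum_{s=1}^t\{\prod_{u=s}^t\epsilon_u\}(\alpha_s+\lambda_s)$.

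The genuinely new part is the transition from the end of time $t-1$ to the first state of time $t$, which raises the dimension from $d_{t-1}$ to $d_t$. Assuming the chains are still coupled there, I would perform two maximal-coupling substeps. First, on the shared coordinates $\theta^{(t-1)}$ I would apply the maximal coupling between $\pi_{t-1}$ and the marginal posterior $\pi_t(\theta^{(t-1)})$, exactly as in the $s=1$ step of Theorem~\ref{thm:main}; because $J_t$ leaves $\theta^{(t-1)}$ untouched by \eqref{eq:cJt}, the iterate $X$ keeps its value while $X'$ is resampled from the residual with probability exactly $\alpha_t$, so these coordinates decouple with probability $\alpha_t$. Second, conditionally on agreement at a common $\theta^{(t-1)}$, I would append $\eta_t$ by maximally coupling the jumping law $J_t(\theta^{(t-1)},\cdot)$ against the conditional posterior $\pi_t(\cdot\mid\theta^{(t-1)})$, which by the jumping-consistency hypothesis decouples with probability $\tfrac12||\pi_t(\cdot\mid\theta^{(t-1)})-J_t(\theta^{(t-1)},\cdot)||_1\le\lambda_t$. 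Using total-variation (maximal) couplings in both substeps is exactly what turns the $2\alpha_t+2\lambda_t$ that a crude application of Lemma~\ref{le:Jt} would produce into the sharp $\alpha_t+\lambda_t$; combining the two substep estimates gives $P(\text{decoupled at }(t,1)\mid\text{coupled at }(t-1,m_{t-1}))\le\alpha_t+\lambda_t$.

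The within-time updates then reuse the $T_t$-coupling of Lemma~\ref{le:le2} verbatim: coupled states stay coupled, and each $T_t$ step preserves decoupling with conditional probability at most $\rho_t$, so across the $m_t$ inner iterations the decoupling probability contracts by the factor $\rho_t^{m_t}=\epsilon_t$. Putting the pieces together, and bounding $1-\alpha_t-\lambda_t\le 1$ in the coupled case, gives $P(\text{decoupled at }(t,1))\le q_{t-1}+\alpha_t+\lambda_t$ and therefore $q_t\le\epsilon_t(q_{t-1}+\alpha_t+\lambda_t)$, the desired recursion.

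I expect the main obstacle to be the validity bookkeeping for the target chain: one must verify that the marginal-then-conditional maximal coupling keeps $X'_{t,1}\sim\pi_t$, i.e.\ that it correctly reassembles $\pi_t(\theta^{(t-1)})\,\pi_t(\eta_t\mid\theta^{(t-1)})=\pi_t(\theta^{(t)})$, paralleling the validity check in Lemma~\ref{le:le1}, and that the $\eta_t$ decoupling estimate holds uniformly in $\theta^{(t-1)}$ so that the $\sup$ in the jumping-consistency condition controls it. For readers content with a looser constant, a shorter route bypasses the coupling altogether: combine the universal-ergodicity contraction $||T_t^{m_t}\circ q-\pi_t||_1\le\epsilon_t||q-\pi_t||_1$, Lemma~\ref{le:Jt} with the jumping bound, and a triangle inequality through $\pi_{t-1}$ to obtain the factor-of-two weaker bound $2\sum_{s=1}^t\{\prod_{u=s}^t\epsilon_u\}(\alpha_s+\lambda_s)$.
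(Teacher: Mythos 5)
Your proposal is correct and follows essentially the same route as the paper: the paper likewise reuses the coupling of Theorem \ref{thm:main} wholesale, modifies only the $s=1$ (jump) step, and unrolls the recursion $q_t\le\epsilon_t(q_{t-1}+\alpha_t+\lambda_t)$ to get the stated bound. The only cosmetic difference is how the jump-step coupling is built: the paper performs a single maximal coupling on $\bbR^{d_t}$ between the push-forward $J_t\circ\pi_{t-1}$ and $\pi_t$, bounding its disagreement probability $\tilde{\alpha}_t=\tfrac{1}{2}||\pi_t-J_t\circ\pi_{t-1}||_1\le\alpha_t+\lambda_t$ by invoking Lemma \ref{le:Jt}, whereas you factorize the same coupling into a marginal substep (cost $\alpha_t$) and a conditional substep (cost $\lambda_t$) and take a union bound --- your two-stage construction is just Lemma \ref{le:Jt} re-expressed in coupling language, and it avoids the factor-of-two loss for exactly the same reason the paper's does (a coupling failure probability is half an $L_1$ distance).
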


\begin{proof}
The proof is almost the same as that of Theorem \ref{thm:main}. The only difference occurs in the constructions of $X_{t,s}$ and $X'_{t,s}$ for $s=1$, which is provided in the following.

When $t\geq1$ and $s=1$, let $X_{t-1,m_{t-1}}=x$ and $X'_{t-1,m_{t-1}}=x'$. Draw $X_{t,1}\sim J_t(x,\cdot)$. With probability $\min\{1,\frac{\pi_{t}(x)}{J_t\circ \pi_{t-1}(x)}\}$, set $X'_{t,1}=x$; with probability $1-\min\{1,\frac{\pi_{t}(x)}{J_t\circ \pi_{t-1}(x)}\}$, draw
$$
  X'_{t,1}\sim\frac{\pi_t(\cdot)-\min\{\pi_t(\cdot),J_t\circ \pi_{t-1}(x)(\cdot)\}}
  {\tilde{\alpha}_t},
$$
where $\tilde{\alpha}_t=\frac{1}{2}||\pi_{t}-J_t\circ \pi_{t-1}||_1$ is the probability of $(X_{t,1}\neq X'_{t,1})$ conditioning on $(X_{t-1,m_{t-1}}=X'_{t-1,m_{t-1}})$. Moreover, by Lemma \ref{le:Jt}, we have $\tilde{\alpha}_t\leq \alpha_t+\lambda_t$.
\end{proof}

Similarly, if we choose $m_t$ such that
\begin{align*}
    \sup_{x}||T_t^{m_t}(x,\cdot)-\pi_t||_1\leq 2(1-\epsilon),
\end{align*}
then
\[
||Q_t\circ\cdots\circ Q_1\circ \pi_0-\pi_t||_1\rightarrow0, \text{ as } n\rightarrow\infty.
\]

An increasing parameter dimension often occurs in Bayesian nonparametric models, such as Dirichlet mixture models and Gaussian process regressions. The following lemma is a counterpart of Lemma \ref{le:le3} for general models that may not have $n^{-1/2}$ convergence rate or normal as limiting distribution for the parameters.
A function $f$ defined on a Banach space $(V,||\cdot||)$ is said to be Fr\'echet differentiable at $v\in V$ if there exists a bounded linear operator $A_v:V\rightarrow\bbR$ such that
\begin{align*}
    f(v+h)=f(v)+A_v(h)+o(||h||), \text{ as }||h||\to0,
\end{align*}
where $A_v$ is called the Fr\'echet derivative of $f$. For $V$ being a Euclidean space, Fr\'echet differentiability is equivalent to the usual differentiability.
The proof utilizes the notion of posterior convergence rate \citep{Ghosal2000} and Fr\'echet differentiability.

\begin{lemma}\label{le:le4}
Consider a Bayesian model $\mathcal{P}=\{P_{\theta}:\theta\in\Theta\}$ with a prior measure $\Pi$ on a Banach space $(\Theta,||\cdot||)$, where the parameter space $\Theta$ can be infinite dimensional. Let $p_{\theta}$ be the density of $P_{\theta}$.
Assume the following conditions:
\begin{enumerate}
  \item the posterior convergence rate of the Bayesian model is at least $\epsilon_n\rightarrow0$ as $n\to\infty$, i.e. the posterior satisfies
\begin{align*}
    \Pi(||\theta-\theta_0||>M\epsilon_n|Y_1,\ldots,Y_n)\rightarrow 0,\text{ in probability,}
\end{align*}
where $Y_1,\ldots,Y_n$ is the observation sequence generated according to  $P_{\theta_0}$, $M>0$ is a constant.
  \item Assume that
  \begin{align*}
    &\max\big[E_{[\theta|Y_1,\ldots,Y_n]}\{p_{\theta}(Y)
    I(||\theta-\theta_0||>M\epsilon_n)\},\\
    &E_{[\theta|Y_1,\ldots,Y_n]}\{\log p_{\theta}(Y)
    I(||\theta-\theta_0||>M\epsilon_n)\}\big]\to 0 \text{ in probability},
  \end{align*}
  where $Y\sim P_{\theta_0}$ is independent of $Y_1,\ldots,Y_n$ and the expectation is taken with respect to the posterior distribution $\Pi(\theta|Y_1,\ldots,Y_n)$ for $\theta$.
  \item Also assume that the log likelihood function $\log p_{\theta}(y)$ is Fr\'echet differentiable at $\theta_0$ with a Fr\'echet derivative $A_{v,y}$ satisfying $E_{\theta_0}||A_{v,Y}||<\infty$, where $||\cdot||$ is the induced operator norm and the expectation is taken with respect to $Y\sim P_{\theta_0}$.
\end{enumerate}
 Then
\begin{align*}
    ||\pi(\cdot|Y_1,\ldots,Y_n)-\pi(\cdot|Y_1,\ldots,Y_{n-1})||_1\to 0, \text{ as }n\to\infty.
\end{align*}
\end{lemma}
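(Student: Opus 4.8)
The plan is to reduce the $L_1$ distance to an integral involving only the single new likelihood factor $p_\theta(Y_n)$, and then to control that integral by splitting the parameter space into a shrinking neighborhood of $\theta_0$, where Fr\'echet differentiability applies, and its complement, where the tail conditions 1 and 2 take over. Write $\pi_n(\theta)=\pi(\theta\mid Y_1,\dots,Y_n)$ and $\pi_{n-1}(\theta)=\pi(\theta\mid Y_1,\dots,Y_{n-1})$. Bayes' rule gives the one-step update
\[
\pi_n(\theta)=\frac{p_\theta(Y_n)}{Z_n}\,\pi_{n-1}(\theta),\qquad Z_n=\int p_\theta(Y_n)\,\pi_{n-1}(\theta)\,d\theta,
\]
so that, setting $W_n(\theta)=p_\theta(Y_n)/p_{\theta_0}(Y_n)$ and $\bar W_n=\int W_n(\theta)\,\pi_{n-1}(\theta)\,d\theta=Z_n/p_{\theta_0}(Y_n)$,
\[
\|\pi_n-\pi_{n-1}\|_1=\frac{1}{\bar W_n}\int\pi_{n-1}(\theta)\,\bigl|W_n(\theta)-\bar W_n\bigr|\,d\theta\le\frac{2}{\bar W_n}\int\pi_{n-1}(\theta)\,\bigl|W_n(\theta)-1\bigr|\,d\theta,
\]
using $|W_n-\bar W_n|\le|W_n-1|+|\bar W_n-1|$ and $|\bar W_n-1|\le\int\pi_{n-1}|W_n-1|$. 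Thus it suffices to prove $I_n:=\int\pi_{n-1}(\theta)\,|W_n(\theta)-1|\,d\theta\to0$ in probability, since then $|\bar W_n-1|\le I_n\to0$, so $\bar W_n\to1$ and the prefactor $2/\bar W_n$ stays bounded; in particular the normalizing constant $Z_n$ is controlled \emph{for free} once $I_n\to0$ is established.

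First I would split $I_n=\int_G+\int_{G^c}$ with $G=\{\theta:\|\theta-\theta_0\|\le M\epsilon_{n-1}\}$, and handle $G$ by condition 3. Writing $h=\theta-\theta_0$ and $r_n(\theta)=\log p_\theta(Y_n)-\log p_{\theta_0}(Y_n)$, Fr\'echet differentiability gives $|r_n(\theta)|\le\|A_{\theta_0,Y_n}\|\,\|h\|+o(\|h\|)\le M\epsilon_{n-1}\bigl(\|A_{\theta_0,Y_n}\|+o(1)\bigr)$ uniformly over $G$. Since $Y_n$ has the same distribution as $Y_1$ and $E_{\theta_0}\|A_{\theta_0,Y}\|<\infty$, the quantity $\|A_{\theta_0,Y_n}\|$ is bounded in probability uniformly in $n$; combined with $\epsilon_{n-1}\to0$ this forces $\sup_{\theta\in G}|r_n(\theta)|\to0$ in probability, hence $\sup_{\theta\in G}|W_n(\theta)-1|=\sup_{\theta\in G}|e^{r_n(\theta)}-1|\to0$, and therefore $\int_G\pi_{n-1}|W_n-1|\le\sup_{\theta\in G}|W_n-1|\to0$.

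Next I would treat $G^c$ using $|W_n-1|\le W_n+1$ and the tail conditions. The mass term is $\int_{G^c}\pi_{n-1}(\theta)\,d\theta=\Pi(\|\theta-\theta_0\|>M\epsilon_{n-1}\mid Y_1,\dots,Y_{n-1})\to0$ by condition 1 at index $n-1$. For the likelihood term, $\int_{G^c}\pi_{n-1}(\theta)W_n(\theta)\,d\theta=p_{\theta_0}(Y_n)^{-1}\,E_{[\theta\mid Y_1,\dots,Y_{n-1}]}\{p_\theta(Y_n)\,I(\|\theta-\theta_0\|>M\epsilon_{n-1})\}$, where $Y_n$ plays the role of the fresh observation $Y$ in condition 2 (it is independent of $Y_1,\dots,Y_{n-1}$), so the conditional expectation is $o_p(1)$; since $p_{\theta_0}(Y_n)^{-1}$ has the same distribution as $p_{\theta_0}(Y_1)^{-1}$, which is finite almost surely and hence bounded in probability, the product is $o_p(1)$. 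Collecting the two regions yields $I_n\to0$ in probability, which by the reduction above completes the argument.

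The hard part will be the bookkeeping forced by the fact that $Y_n$ appears \emph{simultaneously} as the new datum and inside $Z_n$: this is precisely what requires the ``in probability'' treatment and the use of $Y_n\overset{d}{=}Y_1$ to keep both $\|A_{\theta_0,Y_n}\|$ and $p_{\theta_0}(Y_n)^{-1}$ tight uniformly in $n$, rather than for a single fixed observation. A second delicate point is ensuring the Fr\'echet remainder is controlled uniformly over the shrinking ball $G$ and not merely pointwise in $\theta$. I note that the density part of condition 2 suffices for the tail in this route; the $\log p_\theta(Y)$ part of condition 2 is what one would invoke in the alternative of bounding $Z_n$ directly, via the Jensen-type lower bound $\log Z_n\ge\int\pi_{n-1}(\theta)\log p_\theta(Y_n)\,d\theta$.
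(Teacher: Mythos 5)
Your proposal is correct, but it takes a genuinely different route from the paper. The paper goes through the Kullback--Leibler divergence: it bounds $\|\pi_n-\pi_{n-1}\|_1\le 2\sqrt{K(\pi_{n-1},\pi_n)}$ and exploits the closed form $K(\pi_{n-1},\pi_n)=\log E_{[\theta|Y_1,\ldots,Y_{n-1}]}\exp\{l_n(\theta)\}-E_{[\theta|Y_1,\ldots,Y_{n-1}]}\{l_n(\theta)\}$ (a Jensen gap for the new log-likelihood under the old posterior, in which the normalizing constants cancel automatically), then localizes both terms to $l_n(\theta_0)$ using conditions 1--3. You instead work directly in $L_1$ via the one-step Bayes update, normalize by $p_{\theta_0}(Y_n)$ to form the likelihood ratio $W_n$, and reduce everything to $\int\pi_{n-1}|W_n-1|\to 0$ with a ball/tail split. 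Your route buys three things: (i) the bound is linear in the small quantities rather than suffering the square-root loss of the Pinsker-type inequality; (ii) you only ever use the $p_\theta(Y)$ half of condition 2 --- the $\log p_\theta(Y)$ half, which the paper needs to control $E_{[\theta|Y_1,\ldots,Y_{n-1}]}\{l_n(\theta)\}$, is never invoked, so your argument proves the lemma under strictly weaker hypotheses; (iii) you make explicit the tightness facts (that $\|A_{\theta_0,Y_n}\|$, $p_{\theta_0}(Y_n)^{-1}$, and the Fr\'echet remainder evaluated at $Y_n$ are $O_p(1)$ or $o_p(1)$ uniformly in $n$ because $Y_n\overset{d}{=}Y_1$) which the paper's proof uses implicitly but never states --- e.g.\ the paper's step $\log\big(e^{l_n(\theta_0)}+o_p(1)\big)=l_n(\theta_0)+o_p(1)$ silently requires exactly your $p_{\theta_0}(Y_n)^{-1}=O_p(1)$. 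What the paper's KL route buys in exchange is bookkeeping convenience: the normalizing constant disappears inside the KL formula, whereas you must handle it through the $|\bar W_n-1|\le I_n$ argument (which you do correctly). One cosmetic remark: like the paper's, your conclusion is convergence in probability, which is all the ``in probability'' hypotheses can deliver, so you may want to state the limit in that mode explicitly.
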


\begin{proof}
Recall that the Kullback-Leibler (KL) divergence is defined as
\begin{align*}
    K(p,q)=\int p(\theta)\log\frac{p(\theta)}{q(\theta)}d\theta,
\end{align*}
where $f$ and $g$ are two pdfs on $\Theta$. We will use the following relationship between KL divergence and $L_1$ norm:
\begin{align}\label{eq:l1K}
    ||p-q||_1\leq 2\sqrt{K(p,q)}.
\end{align}
Use the shorthand $\pi_n$ for the posterior density $\pi(\cdot|Y_1,\ldots,Y_n)$ for $\theta$. By definition,
\begin{align*}
    \pi_n(\theta)=\frac{\exp\{\sum_{i=1}^n  l_i(\theta)\}\pi(\theta)}{\int_{\Theta}\exp\{\sum_{i=1}^n l_i(\theta)\}\pi(\theta)d\theta},
\end{align*}
where $l_i(\theta)=\log p_{\theta}(Y_i)$ is the log likelihood for the $i$th observation and $\pi$ is the prior for $\theta$. Moreover,
\begin{align*}
    \log\frac{\pi_{n-1}(\theta)}{\pi_n(\theta)}=&-l_n(\theta)+
    \log\bigg\{\int_{\Theta}\frac{\exp\{\sum_{i=1}^{n-1} l_i(\theta)\}\pi(\theta)}{\int_{\Theta}\exp\{\sum_{i=1}^{n-1} l_i(\theta)\}\pi(\theta)d\theta}\exp\{l_n(\theta)\}d\theta\bigg\}\\
    = &-l_n(\theta)+\log E_{[\theta|Y_1,\ldots,Y_{n-1}]} \exp\{l_n(\theta)\},
\end{align*}
where $E_{[\theta|Y_1,\ldots,Y_{n-1}]}$ is the expectation with respect to the posterior distribution $\Pi(\theta|Y_1,\ldots,Y_{n-1})$.
Therefore, we obtain:
\begin{align}\label{eq:KL}
    K(\pi_{n-1},\pi_{n})=&\int_{\Theta}\pi_{n-1}(\theta)
    \log\frac{\pi_{n-1}(\theta)}{\pi_n(\theta)}d\theta\nonumber\\
    =& \log E_{[\theta|Y_1,\ldots,Y_{n-1}]} \exp\{l_n(\theta)\}-E_{[\theta|Y_1,\ldots,Y_{n-1}]} \{l_n(\theta)\}.
\end{align}
By the third condition, we have that for any $||\theta-\theta_0||\leq M\epsilon_n$,
\begin{align*}
    l_n(\theta)=l_n(\theta_0)+O_{P_{\theta_0}}(\epsilon_n).
\end{align*}
Combining the above with the second condition, we have
\begin{align*}
    E_{[\theta|Y_1,\ldots,Y_{n-1}]} \exp\{l_n(\theta)\}=&
    E_{[\theta|Y_1,\ldots,Y_{n-1}]}\big\{ \exp\{l_n(\theta)\}I(||\theta-\theta_0||\leq M\epsilon_n)\big\}+o_{P_{\theta_0}}(1)\\
    =& \exp\{l_n(\theta_0)\}+o_{P_{\theta_0}}(1).
\end{align*}
Similarly, we have
\begin{align*}
    E_{[\theta|Y_1,\ldots,Y_{n-1}]}\{l_n(\theta)\}=
   l_n(\theta_0)+o_{P_{\theta_0}}(1).
\end{align*}
Combining the above two with \eqref{eq:l1K} and \eqref{eq:KL}, we obtain
\begin{align*}
    ||\pi(\cdot|Y_1,\ldots,Y_n)-\pi(\cdot|Y_1,\ldots,Y_{n-1})||_1\to 0, \text{ as }n\to\infty.
\end{align*}
\end{proof}

The second assumption strengthens the first assumption in terms of the tail behavior of the posterior distributions and can be implied by the first if both $p_{\theta}(y)$ and $\log p_{\theta}(y)$ are uniformly bounded; for example, when $\Theta$ is compact.
The following corollary is an easy consequence of the above lemma by using the inequality $|\int f(x)dx|\leq\int|f(x)|dx$.
\begin{corollary}\label{cor:mar}
Let $\xi$ be a $d_0$ dimensional component of $\theta$. Denote the marginal posterior of $\xi$ by $\pi_{\xi}(\cdot|Y_1,\ldots,Y_n)$. Then under the conditions in Lemma \ref{le:le4}, we have
\begin{align*}
    ||\pi_{\xi}(\cdot|Y_1,\ldots,Y_n)-\pi_{\xi}(\cdot|Y_1,\ldots,Y_{n-1})||_1
    \to 0, \text{ as }n\to\infty.
\end{align*}
\end{corollary}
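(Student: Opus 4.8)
The plan is to obtain the corollary directly from the conclusion of Lemma~\ref{le:le4}, exploiting the fact that marginalization (integrating out a subset of coordinates) is non-expansive in the $L_1$ norm. Lemma~\ref{le:le4} gives that the full joint posterior densities satisfy $||\pi(\cdot\mid Y^{(n)})-\pi(\cdot\mid Y^{(n-1)})||_1\to 0$, so it suffices to show that the $L_1$ distance between the $\xi$-marginals is bounded by the $L_1$ distance between the joint posteriors. I would not need any new analytic input beyond the elementary inequality $|\int f|\leq\int|f|$ highlighted by the authors.

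Concretely, I would write $\theta=(\xi,\zeta)$, where $\zeta$ denotes the complementary component of $\theta$, and express the marginal posterior of $\xi$ as $\pi_{\xi}(\xi\mid Y^{(n)})=\int\pi(\xi,\zeta\mid Y^{(n)})\,d\zeta$, and analogously for the index $n-1$. The core of the argument is then the single chain of (in)equalities
\begin{align*}
||\pi_{\xi}(\cdot\mid Y^{(n)})-\pi_{\xi}(\cdot\mid Y^{(n-1)})||_1
&=\int\Bigl|\int\bigl[\pi(\xi,\zeta\mid Y^{(n)})-\pi(\xi,\zeta\mid Y^{(n-1)})\bigr]\,d\zeta\Bigr|\,d\xi\\
&\leq\int\!\!\int\bigl|\pi(\xi,\zeta\mid Y^{(n)})-\pi(\xi,\zeta\mid Y^{(n-1)})\bigr|\,d\zeta\,d\xi\\
&=||\pi(\cdot\mid Y^{(n)})-\pi(\cdot\mid Y^{(n-1)})||_1 ,
\end{align*}
where the inequality pulls the absolute value inside the $\zeta$-integral via $|\int f|\leq\int|f|$, and the last equality is Tonelli's theorem recombining the iterated integral into the joint $L_1$ norm. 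Letting $n\to\infty$ and invoking Lemma~\ref{le:le4} then forces the left-hand side to zero, which is exactly the claim.

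The only point requiring genuine care, and hence the place I would regard as the main (mild) obstacle, is the measure-theoretic status of the ``integration over $\zeta$'' in the setting of Lemma~\ref{le:le4}, where $\Theta$ may be infinite dimensional so that $\zeta$ need not be a Euclidean coordinate. I would address this by noting that $\xi$ is a fixed $d_0$-dimensional component, so its marginal is a genuine density on $\bbR^{d_0}$ with respect to Lebesgue measure obtained by disintegrating the joint posterior along $\xi$; the displayed manipulation is then justified by writing the joint posterior against a product of the $\xi$-reference measure and the conditional measure of the complement, and applying Tonelli to the nonnegative integrand $|\pi(\xi,\zeta\mid Y^{(n)})-\pi(\xi,\zeta\mid Y^{(n-1)})|$. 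With that interpretation fixed, the remaining steps are purely formal, confirming the authors' remark that the result is an immediate consequence of Lemma~\ref{le:le4}.
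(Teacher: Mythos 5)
Your proposal is correct and matches the paper's own argument, which is exactly the remark that the corollary follows from Lemma \ref{le:le4} via the inequality $|\int f(x)dx|\leq\int|f(x)|dx$ applied to the marginalization over the complementary component. Your additional care about disintegrating along the $d_0$-dimensional coordinate $\xi$ in a possibly infinite-dimensional $\Theta$ is a reasonable elaboration of the same one-line idea, not a different route.
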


In the case when $T_t$ corresponds to the transition kernel of a Gibbs sampler,
we can consider the marginal convergence of some fixed $d_0$ dimensional component
$\xi$ of $\theta$, for example, for $\theta$ in function spaces, $\xi$ can be the
 evaluations $\theta(x_1,\ldots,x_{d_0})$ on $d_0$ fixed points $x_1,\ldots,x_{d_0}$
  in the domain of $\theta$. Due to the special structure of the graphical
  representation for a Gibbs sampler, the process of $\{\xi_s:s\geq0\}$ obtained
  by marginalizing out other parameters in the Gibbs sampler with transition
  kernel $T_t$ is still a Markov chain with another transition kernel $T_{\xi,t}$
  defined on $\bbR^{d_0}\times\bbR^{d_0}$. Therefore, with this marginalized process
   for $\xi$, we can combine Theorem \ref{thm:main} and Corollary \ref{cor:mar}
   to prove the marginal convergence of the posterior for the fixed dimensional
   parameter $\xi$ under the new transition kernels $T_{\xi,t}$'s. To ensure the
    convergence of this marginal chain, $m_t$ can also be chosen by the procedures
     in section \ref{se:mt}, but only including the components of $\xi$ in the
     calculations of \eqref{eq:ftk}.

\subsection{Weakening the universal ergodicity condition}
Both Theorem \ref{thm:main} and \ref{thm:mainb} rely on the strong condition of universal ergodicity. In this subsection, we generalize these results to hold under the weaker geometrically ergodic condition. We will use the following sufficient condition
for geometric ergodicity \citep{roberts1997} for an irreducible, aperiodic Markov chain
with transition kernel $T$:
there exists a $\pi$-a.e.-finite measurable function $V:\mathcal{X}\to[1,\infty]$, which
may be taken to satisfy $\pi(V^k)<\infty$ for any $j\in\mathds{N}$, such that for some $\rho<1$,
\begin{align}\label{eq:dge}
    ||T^t(x,\cdot)-\pi(\cdot)||_V\leq V(x)\rho^t,\quad x\in\mathcal{X},\quad t\in\mathds{N},
\end{align}
where $||\mu(\cdot)||_V=\sup_{|f|\leq V}|\mu(f)|$ for any signed measure $\mu$. When $V\equiv1$, we return to the uniform ergodic case.
The following lemma generalizes Lemma \ref{le:le1} and \ref{le:le2} to geometrically ergodic chains.

\begin{lemma}\label{le:ge}
Let $\{X_t\}$ be a Markov chain on $\mathcal{X}$, with transition kernel $T$ and stationary
distribution $\pi$. If there exists a function $V:\mathcal{X}\mathds{}\to[1,\infty)$ and
$\rho\in(0,1)$ such that for all $x\in\mathcal{X}$,
      \begin{align}\label{eq:cge}
        ||T(x,\cdot)-\pi(\cdot)||_V\leq V(x)\rho,
      \end{align}
      then $\{X_t\}$ is geometrically ergodic. Moreover, for any initial distribution $p_0$,
      we have
      \begin{align*}
        ||T^t\circ p_0-\pi||_V\leq \rho^t||p_0-\pi||_V,\quad x\in\mathcal{X},\quad t\in\mathds{N}.
      \end{align*}
\end{lemma}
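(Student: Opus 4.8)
The plan is to read condition \eqref{eq:cge} as a one-step contraction in the $V$-norm on the space of signed measures of total mass zero, and then iterate, paralleling the way the $L_1$ contraction in Lemmas \ref{le:le1} and \ref{le:le2} was iterated. The enabling tool is the dual representation of the $V$-norm: for a signed measure $\mu$ with Hahn--Jordan decomposition $\mu = \mu^+ - \mu^-$, one has $||\mu||_V = \sup_{|f|\le V}|\mu(f)| = \int V\,d|\mu|$, with the supremum attained by taking $f = V$ on the positive set and $f = -V$ on the negative set. I would establish this identity first, since it turns every $V$-norm estimate into an integral against $|\mu|$.

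For the one-step bound, let $\mu$ be any signed measure with $\mu(\mathcal{X}) = 0$ and $\int V\,d|\mu| < \infty$, and let $T\circ\mu$ denote the measure $A \mapsto \int T(x,A)\,\mu(dx)$, consistent with the notation $T^t\circ p_0$ of Section \ref{se:pre}. Since $\pi$ is stationary, $T\circ\pi = \pi$, so with $\mu = p_0 - \pi$ we have $T^t\circ(p_0 - \pi) = T^t\circ p_0 - \pi$. Fixing $f$ with $|f|\le V$ and using $\mu(\mathcal{X}) = 0$ to subtract the constant $\pi(f)$, I would write
\begin{align*}
    (T\circ\mu)(f) = \int\big[(Tf)(x) - \pi(f)\big]\,\mu(dx) = \int\big(T(x,\cdot)-\pi\big)(f)\,\mu(dx),
\end{align*}
and bound the integrand by $|(T(x,\cdot)-\pi)(f)| \le ||T(x,\cdot)-\pi||_V \le V(x)\rho$ using \eqref{eq:cge}. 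Hence $|(T\circ\mu)(f)| \le \rho\int V\,d|\mu| = \rho\,||\mu||_V$, and taking the supremum over $|f|\le V$ gives the contraction $||T\circ\mu||_V \le \rho\,||\mu||_V$.

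Because $T$ is a Markov kernel, $(T\circ\mu)(\mathcal{X}) = \mu(\mathcal{X}) = 0$, so the total-mass-zero property is preserved under each application of $T$ and the contraction may be iterated. Applying it $t$ times to $\mu = p_0 - \pi$ yields $||T^t\circ p_0 - \pi||_V \le \rho^t\,||p_0 - \pi||_V$, which is the ``moreover'' claim. For geometric ergodicity I would instead take $\mu = T(x,\cdot) - \pi$, which again has total mass zero; iterating the contraction $t-1$ times and invoking \eqref{eq:cge} once for the remaining factor gives
\begin{align*}
    ||T^t(x,\cdot) - \pi||_V = ||T^{t-1}\circ(T(x,\cdot)-\pi)||_V \le \rho^{t-1}||T(x,\cdot)-\pi||_V \le V(x)\rho^t.
\end{align*}
Since $V\ge1$ forces $||\cdot||_1 \le ||\cdot||_V$, this gives in particular $||T^t(x,\cdot)-\pi||_1 \le V(x)\rho^t$, matching the definition of geometric ergodicity in Section \ref{se:pre} with $C_x = V(x)$.

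The substance of the argument is the one-step contraction, and the main obstacle is bookkeeping of integrability rather than any new idea. The step that rewrites $(T\circ\mu)(f)$ by subtracting $\pi(f)$ is legitimate only when $\pi(V)<\infty$, so that $\pi(f)$ is finite for every $|f|\le V$; this finiteness is part of the standard geometric-ergodicity framework behind \eqref{eq:dge} (equivalently it is implied by a finite right-hand side in \eqref{eq:cge} together with an accompanying drift condition) and can be assumed. One should also dispatch separately the trivial case $||p_0-\pi||_V=\infty$, where the asserted bound holds vacuously. With the dual identity $||\mu||_V = \int V\,d|\mu|$ and these finiteness checks in place, the remaining contraction-and-iterate computation is routine.
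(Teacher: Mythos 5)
Your proof is correct, and the engine is the same as the paper's---condition \eqref{eq:cge} is read as a one-step contraction and then iterated---but the execution is dual, and the order of deductions is reversed. The paper works on the function/operator side: it defines the weighted kernel norm $|||K|||_V=\sup_{x}||K(x,\cdot)||_V/V(x)$, proves submultiplicativity along the factorization $T^t-\pi=(T-\pi)(T^{t-1}-\pi)$ (normalizing $g_f=(T^{t-1}-\pi)f/|||T^{t-1}-\pi|||_V$ so that $|g_f|\le V$), and so obtains the pointwise bound $||T^t(x,\cdot)-\pi||_V\le V(x)\rho^t$ first; the bound for an arbitrary initial distribution then follows by integrating $\{(T^t-\pi)f\}(x)$ against $p_0-\pi$. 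You work on the measure side: a one-step contraction $||T\circ\mu||_V\le\rho\,||\mu||_V$ on signed measures of total mass zero, iterated via the fact that a Markov kernel preserves total mass, gives the $p_0$ bound at once, and geometric ergodicity drops out as the special case $\mu=T(x,\cdot)-\pi$ with one final application of \eqref{eq:cge}. The two are equivalent formulations of the same contraction (your measure-side inequality is exactly the dual of the paper's $|||T-\pi|||_V\le\rho$ restricted to mass-zero measures), so neither is more general in substance; what yours buys is that it dispenses with the kernel norm and its submultiplicativity entirely, and it surfaces the integrability bookkeeping the paper passes over silently---that subtracting $\pi(f)$ requires $\pi(V)<\infty$ (part of the framework surrounding \eqref{eq:dge}, as you note), and that the case $||p_0-\pi||_V=\infty$ is vacuous. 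Either form delivers precisely the inequality that the proof of Theorem \ref{thm:gmain} later invokes.
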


\begin{proof}
For a kernel $K(x,y)$ on $\mathcal{X}\times\mathcal{X}$, we define
\begin{align*}
    |||K|||_V=\sup_{x\in\bbR^d}\frac{||K(x,\cdot)||_V}{V(x)}
    =\sup_{x\in\bbR^d}\sup_{|f|\leq V}\frac{|(Kf)(x)|}{V(x)}.
\end{align*}
It is easy to verify that $|||\cdot|||_V$ satisfies the triangle inequality.
By viewing $\pi(x,y)=\pi(y)$ as a kernel on $\mathcal{X}\times\mathcal{X}$,
we have $|||T-\pi|||_V\leq\rho$. Moreover, for any $t\in\mathds{N}$, we have,
\begin{align*}
    |||T^t-\pi|||_V=&\sup_{x\in\mathcal{X}}\sup_{|f|\leq V}\frac
    {\big|\{(T-\pi)(T^{t-1}-\pi)f\}(x)\big|}{V(x)}\\
    =& |||T^{t-1}-\pi|||_V\sup_{x\in\mathcal{X}}\sup_{|f|\leq V}\frac
    {|\{(T-\pi)g_f\}(x)|}{V(x)},
\end{align*}
with $g_f(x)=\{(T^{t-1}-\pi)f\}(x)/|||T^{t-1}-\pi|||_V$. By the definition of $|||\cdot|||_V$,
we have $|g_f|\leq V$ for any $f$ satisfying $|f|\leq V$. Combining the above arguments, we obtain
\begin{align*}
    |||T^t-\pi|||_V\leq &|||T^{t-1}-\pi|||_V\cdot|||T-\pi|||_V\\
    \leq & \rho |||T^{t-1}-\pi|||_V\\
    \leq &\cdots\leq \rho^{t}.
\end{align*}
This implies geometric ergodicity, i.e.
\begin{align*}
    ||T^t(x,\cdot)-\pi(\cdot)||_V\leq V(x)\rho^t,\quad x\in\mathcal{X},\quad t\in\mathds{N}.
\end{align*}
For the second part, by the stationarity of $\pi$, we have
\begin{align*}
    ||T^t\circ p_0-\pi||_V=& \sup_{|f|\leq V}\int_{\mathds{X}}\{p_0(x)-\pi(x)\}
    \{(T^t-\pi)f\}(x)dx\\
    \leq & \int_{\mathds{X}}|p_0(x)-\pi(x)|\ V(x)\sup_{|f|\leq V}\frac{|\{(T^t-\pi)f\}(x)|}{V(x)}dx\\
    \leq & \rho^t||p_0-\pi||_V.
\end{align*}

\end{proof}

By taking $t=1$ in \eqref{eq:dge}, \eqref{eq:cge} is also a necessary condition for geometric ergodicity. Therefore, the above lemma provides a necessary and sufficient condition for geometric ergodicity, which extends Lemma \ref{le:le2}.
By the above lemma, we can generalize Theorem \ref{thm:main} as follows, where $d_t=d$, for any $t$.
\begin{theorem}\label{thm:gmain}
Assuming the following conditions:
\begin{enumerate}
  \item (Geometric ergodicity) There exists a function $V:\bbR^{d}\to[1,\infty)$, $C>0$
     and $\rho_t\in(0,1)$, such that $\pi_t(V^2)=E_{\pi_t}V^2\leq C$ for any $t$
       and for all $x\in\bbR^{d}$,
      \begin{align*}
        ||T_t(x,\cdot)-\pi_t(\cdot)||_V\leq V(x)\rho_t.
      \end{align*}
  \item (Stationary convergence) The stationary distribution $\pi_t$ of $T_t$ satisfies
      \begin{align*}
        \alpha_t={2\sqrt{C}}d_H(\pi_t,\pi_{t-1})\to0,
      \end{align*}
      where $d_H$ is the Hellinger distance defined by $d^2(\mu,\mu')=\int(\mu^{1/2}(x)
      -\mu'^{1/2}(x))^2dx$.
\end{enumerate}
Let $\epsilon_t=\rho_t^{m_t}$. Then for any initial distribution $\pi_0$,
\begin{align*}
   ||Q_t\circ\cdots\circ Q_1\circ \pi_0-\pi_t||_1\leq \sum_{s=1}^t\bigg\{\prod_{u=s}^t\epsilon_u\bigg\}
   \alpha_s.
\end{align*}
\end{theorem}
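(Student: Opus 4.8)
The plan is to follow the ``short proof'' of Theorem~\ref{thm:main} almost verbatim, but to run the entire recursion in the $V$-norm rather than in $L_1$, replacing the $L_1$ contraction \eqref{eq:dimi} by the $V$-norm contraction from Lemma~\ref{le:ge}; only at the very end do I translate the accumulated $V$-norm bound back into the desired $L_1$ statement, and it is this translation that forces the quadratic moment condition $\pi_t(V^2)\le C$ and the appearance of the Hellinger distance.

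First I would reduce to the $V$-norm. Since $V\ge 1$, we have $\{f:|f|\le 1\}\subseteq\{f:|f|\le V\}$, whence $||\mu||_1\le||\mu||_V$ for every signed measure $\mu$, so it suffices to control the left-hand side in $||\cdot||_V$. Because $d_t=d$ is fixed, the jumping kernels are identities and $Q_t=T_t^{m_t}$. Condition~1 is precisely hypothesis \eqref{eq:cge} of Lemma~\ref{le:ge} for each $T_t$ with rate $\rho_t$, so that lemma supplies the block contraction
\[
||Q_t\circ p-\pi_t||_V=||T_t^{m_t}\circ p-\pi_t||_V\le\rho_t^{m_t}||p-\pi_t||_V=\epsilon_t||p-\pi_t||_V
\]
for any probability measure $p$.

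Next I would telescope. Writing $b_t=||Q_t\circ\cdots\circ Q_1\circ\pi_0-\pi_t||_V$, the block contraction together with the triangle inequality $||Q_{t-1}\circ\cdots\circ Q_1\circ\pi_0-\pi_t||_V\le b_{t-1}+||\pi_t-\pi_{t-1}||_V$ gives the recursion $b_t\le\epsilon_t\big(b_{t-1}+||\pi_t-\pi_{t-1}||_V\big)$. Unrolling this from the base case $b_0=0$ (the chains are initialized at $\pi_0$) produces
\[
b_t\le\sum_{s=1}^t\Big\{\prod_{u=s}^t\epsilon_u\Big\}\,||\pi_s-\pi_{s-1}||_V,
\]
which already has the exact shape of the claimed bound, with $||\pi_s-\pi_{s-1}||_V$ standing in for $\alpha_s$.

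The main obstacle, and the only genuinely new estimate, is to show $||\pi_s-\pi_{s-1}||_V\le\alpha_s=2\sqrt{C}\,d_H(\pi_s,\pi_{s-1})$. I would factor the density difference as $\pi_s-\pi_{s-1}=(\sqrt{\pi_s}-\sqrt{\pi_{s-1}})(\sqrt{\pi_s}+\sqrt{\pi_{s-1}})$ and, for any test function with $|f|\le V$, apply Cauchy--Schwarz:
\[
\Big|\int f\,(\pi_s-\pi_{s-1})\Big|\le\Big(\int(\sqrt{\pi_s}-\sqrt{\pi_{s-1}})^2\Big)^{1/2}\Big(\int V^2(\sqrt{\pi_s}+\sqrt{\pi_{s-1}})^2\Big)^{1/2}.
\]
The first factor is exactly $d_H(\pi_s,\pi_{s-1})$. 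For the second, the elementary bound $(a+b)^2\le 2(a^2+b^2)$ gives $\int V^2(\sqrt{\pi_s}+\sqrt{\pi_{s-1}})^2\le 2\pi_s(V^2)+2\pi_{s-1}(V^2)\le 4C$, so the second factor is at most $2\sqrt{C}$. This step is exactly why Condition~1 demands the second moment $\pi_t(V^2)\le C$ and not merely $\pi_t(V)<\infty$: the $V$-weight is squared when it is extracted from the Cauchy--Schwarz estimate. Substituting $||\pi_s-\pi_{s-1}||_V\le\alpha_s$ into the telescoped bound and recalling $||\cdot||_1\le||\cdot||_V$ finishes the proof.
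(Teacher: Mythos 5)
Your proposal is correct and follows essentially the same route as the paper's proof: the $V$-norm contraction from Lemma \ref{le:ge}, the telescoped recursion $b_t\le\epsilon_t(b_{t-1}+||\pi_t-\pi_{t-1}||_V)$, the Cauchy--Schwarz/Hellinger factorization yielding $||\pi_s-\pi_{s-1}||_V\le 2\sqrt{C}\,d_H(\pi_s,\pi_{s-1})$, and the final reduction $||\cdot||_1\le||\cdot||_V$. The only cosmetic difference is that you phrase the key estimate via test functions $|f|\le V$ while the paper writes $||\pi_t-\pi_{t-1}||_V$ as $\int|\pi_t-\pi_{t-1}|V$, and you make explicit the elementary bound $(a+b)^2\le 2(a^2+b^2)$ that the paper leaves implicit.
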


\begin{proof}
By Lemma \ref{le:ge}, for any distribution $p_0$ on $\bbR^d$ and any $t\in\mathds{N}$, we have
\begin{align*}
    ||T_t^{m_t}\circ p_0-\pi_t||_V\leq \rho_t^{m_t}||p_0-\pi_t||_V.
\end{align*}
Therefore, we have
\begin{align*}
    ||Q_t\circ\cdots\circ Q_1\circ\pi_0-\pi_t||_V\leq &\epsilon_t||Q_{t-1}\circ\cdots\circ Q_1\circ\pi_0-\pi_t||_V\\
    \leq &\epsilon_t||Q_{t-1}\circ\cdots\circ Q_1\circ\pi_0-\pi_{t-1}||_V+
    \epsilon_t||\pi_{t}-\pi_{t-1}||_V.
\end{align*}
By Cauchy's inequality,
\begin{align*}
  ||\pi_{t}-\pi_{t-1}||_V=&\int_{\bbR^d} |\pi_{t}(x)-\pi_{t-1}(x)|V(x)dx\\
  \leq & d_H(\pi_t,\pi_{t-1})\bigg[\int_{\bbR^d} \{\pi_{t}^{1/2}(x)+\pi_{t-1}^{1/2}(x)\}^2V^2(x)dx\bigg]^{1/2}\\
  \leq & 2\sqrt{C}d_H(\pi_t,\pi_{t-1})=\alpha_t.
\end{align*}
Combining the above two inequalities, we obtain
\begin{align*}
    ||Q_t\circ\cdots\circ Q_1\circ\pi_0-\pi_t||_V\leq &\epsilon_t||Q_{t-1}\circ\cdots\circ Q_1\circ\pi_0-\pi_{t-1}||_V+
    \alpha_t\epsilon_t\\
    \leq&\cdots\leq \sum_{s=1}^t\bigg\{\prod_{u=s}^t\epsilon_u\bigg\}
   \alpha_s.
\end{align*}
Finally, the theorem can be proved by noticing that $||\mu||_1=\sup_{||f||\leq1}|\mu(f)|\leq\sup_{||f||\leq V}|\mu(f)|=||\mu||_V$ for any signed measure $\mu$.
\end{proof}

The first condition in the theorem is a uniform geometric ergodic condition on the collection $\{T_t:t\in\mathds{N}\}$ of transition kernels, where a common potential $V$ exists. The second condition is true for those $\pi_t$'s in Lemma \ref{le:le3} and \ref{le:le4}. In fact, Lemma \ref{le:le3} uses the inequality $||\pi_t-\pi_{t-1}||_1\leq d_H(\pi_t,\pi_{t-1})$ and proves $d_H(\pi_t,\pi_{t-1})\to 0$. Lemma \ref{le:le4} proves $||\pi_t-\pi_{t-1}||_1\leq 2\sqrt{K(\pi_t,\pi_{t-1})}\to 0$, where $K(p,q)$ is the Kullback-Leibler divergence and satisfies $d_H(p,q)^2\leq K(p,q)$ for any probability densities $p$ and $q$.

Similarly, we have the following counterpart for Theorem \ref{thm:mainb} under geometrically ergodic condition.

\begin{theorem}\label{thm:gmainb}
Assuming the following conditions:
\begin{enumerate}
  \item (Geometric ergodicity) For each $t$, there is a function $V_t:\bbR^{d_t}\to[1,\infty)$, $C>0$
     and $\rho_t\in(0,1)$, such that:
      \begin{enumerate}
        \item $\pi_t(V_t^2)=E_{\pi_t}V_t^2\leq C$ for any $t$;
        \item $E_{\pi_t}[V_t(\theta^{(t)})|\theta^{(t-1)}]=V_{t-1}(\theta^{(t-1)})$,
            where $\theta^{(t)}=(\theta^{(t-1)},\eta_t)$;
        \item for all $x\in\bbR^{d_t}$,
      \begin{align*}
        ||T_t(x,\cdot)-\pi_t(\cdot)||_{V_t}\leq V_t(x)\rho_t.
      \end{align*}
      \end{enumerate}
  \item (Stationary convergence) The stationary distribution $\pi_t$ of $T_t$ satisfies
      \begin{align*}
        \alpha_t={2\sqrt{C}}d_H(\pi_t,\pi_{t-1})\to0,
      \end{align*}
      where $\pi_t$ is the marginal posterior of $\theta^{(t-1)}$ at time $t$ in $\alpha_t$.
  \item (Jumping consistency) For a sequence of $\lambda_t\to 0$, the following holds:
      \begin{align*}
      \sup_{\theta^{(t-1)}\in\bbR^{d_{t-1}}}||\pi_{t}(\cdot|\theta^{(t-1)})-J_t(
        \theta^{(t-1)},\cdot)||_{\tilde{V}_t}\leq \lambda_t,
      \end{align*}
      where $\tilde{V}_t$ is defined on $\bbR^{d_t-d_{t-1}}$ by $\tilde{V}_t(\eta_t)=\int_{\bbR^{d_{t-1}}}V_t(\theta^{(t-1)},\eta_t)d\theta^{(t-1)}$.
\end{enumerate}
Let $\epsilon_t=\rho_t^{m_t}$. Then for any initial distribution $\pi_0$,
\begin{align*}
   ||Q_t\circ\cdots\circ Q_1\circ \pi_0-\pi_t||_1\leq \sum_{s=1}^t\bigg\{\prod_{u=s}^t\epsilon_u\bigg\}
   (\alpha_s+\lambda_s).
\end{align*}
\end{theorem}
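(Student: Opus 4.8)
The plan is to fuse the recursive $V$-norm contraction behind Theorem~\ref{thm:gmain} with the jumping-kernel decomposition behind Lemma~\ref{le:Jt} and Theorem~\ref{thm:mainb}, the new feature being that the potential $V_t$, and hence the weighted norm $\|\cdot\|_{V_t}$, now lives on $\bbR^{d_t}$ and changes with $t$. Write $p_t=Q_t\circ\cdots\circ Q_1\circ\pi_0$ and $a_t=\|p_t-\pi_t\|_{V_t}$, so that $a_0=0$. Since $Q_t=T_t^{m_t}\circ J_t$, I would first peel off the $m_t$ updating steps: Lemma~\ref{le:ge} applied with $V=V_t$, $T=T_t$, $\pi=\pi_t$ gives $\|T_t^{m_t}\circ\mu-\pi_t\|_{V_t}\le\rho_t^{m_t}\|\mu-\pi_t\|_{V_t}=\epsilon_t\|\mu-\pi_t\|_{V_t}$ with $\mu=J_t\circ p_{t-1}$. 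The whole theorem then reduces to the single inequality $\|J_t\circ p_{t-1}-\pi_t\|_{V_t}\le a_{t-1}+\alpha_t+\lambda_t$; granting this, the recursion $a_t\le\epsilon_t(a_{t-1}+\alpha_t+\lambda_t)$ unwinds exactly as in Theorem~\ref{thm:gmain} to $a_t\le\sum_{s=1}^t\{\prod_{u=s}^t\epsilon_u\}(\alpha_s+\lambda_s)$, and the stated $L_1$ bound follows from $\|\cdot\|_1\le\|\cdot\|_{V_t}$, valid because $V_t\ge1$.

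The core step is thus a $V_t$-weighted analogue of Lemma~\ref{le:Jt}. Writing the joint density as $\pi_t(\theta^{(t-1)},\eta_t)=\pi_t(\theta^{(t-1)})\pi_t(\eta_t\mid\theta^{(t-1)})$ (with $\pi_t(\theta^{(t-1)})$ the marginal of $\theta^{(t-1)}$, as in~\eqref{eq:mar}) and $(J_t\circ p_{t-1})(\theta^{(t-1)},\eta_t)=p_{t-1}(\theta^{(t-1)})J_t(\theta^{(t-1)},\eta_t)$, I would split $J_t\circ p_{t-1}-\pi_t$ by adding and subtracting $p_{t-1}(\theta^{(t-1)})\pi_t(\eta_t\mid\theta^{(t-1)})$ into a \emph{jumping} piece $p_{t-1}(\theta^{(t-1)})\{J_t-\pi_t(\eta_t\mid\theta^{(t-1)})\}$ and a \emph{marginal} piece $\{p_{t-1}-\pi_t\}(\theta^{(t-1)})\,\pi_t(\eta_t\mid\theta^{(t-1)})$. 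The marginal piece is where condition~1(b) does its work: since $\|\mu\|_{V_t}=\int|\mu|V_t$ for a density $\mu$, integrating $V_t$ against $\pi_t(\eta_t\mid\theta^{(t-1)})$ collapses the weight through $\int V_t(\theta^{(t-1)},\eta_t)\pi_t(\eta_t\mid\theta^{(t-1)})\,d\eta_t=V_{t-1}(\theta^{(t-1)})$, so the marginal piece is exactly $\|p_{t-1}-\pi_t\|_{V_{t-1}}$ on $\bbR^{d_{t-1}}$, which by the triangle inequality is at most $\|p_{t-1}-\pi_{t-1}\|_{V_{t-1}}+\|\pi_t-\pi_{t-1}\|_{V_{t-1}}=a_{t-1}+\|\pi_t-\pi_{t-1}\|_{V_{t-1}}$. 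For the last term I would reuse the Cauchy--Schwarz/Hellinger estimate from Theorem~\ref{thm:gmain}, checking the second-moment bound $\pi_t(V_{t-1}^2)\le C$ via Jensen and 1(b), namely $V_{t-1}^2=(E_{\pi_t}[V_t\mid\theta^{(t-1)}])^2\le E_{\pi_t}[V_t^2\mid\theta^{(t-1)}]$ so that $\pi_t(V_{t-1}^2)\le\pi_t(V_t^2)\le C$; this gives $\|\pi_t-\pi_{t-1}\|_{V_{t-1}}\le 2\sqrt C\,d_H(\pi_t,\pi_{t-1})=\alpha_t$.

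The jumping piece is the delicate part and the step I expect to be the main obstacle. Its $V_t$-norm equals $\int p_{t-1}(\theta^{(t-1)})\{\int V_t(\theta^{(t-1)},\eta_t)|J_t-\pi_t(\eta_t\mid\theta^{(t-1)})|\,d\eta_t\}\,d\theta^{(t-1)}$, whereas hypothesis~3 controls the inner integral only in the $\tilde V_t$-norm built from the projected potential $\tilde V_t(\eta_t)=\int V_t(\theta^{(t-1)},\eta_t)\,d\theta^{(t-1)}$. The crux is to pass from the fixed-$\theta^{(t-1)}$ weighting $V_t(\theta^{(t-1)},\cdot)$ to the projected $\tilde V_t$, so that $\int V_t(\theta^{(t-1)},\cdot)|J_t-\pi_t(\cdot\mid\theta^{(t-1)})|\,d\eta_t\le\lambda_t$ uniformly in $\theta^{(t-1)}$; this is precisely the role for which $\tilde V_t$ was introduced in condition~3, and once it is secured the jumping piece is at most $\lambda_t\int p_{t-1}=\lambda_t$. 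Combining the marginal and jumping pieces through the triangle inequality for $\|\cdot\|_{V_t}$ yields $\|J_t\circ p_{t-1}-\pi_t\|_{V_t}\le a_{t-1}+\alpha_t+\lambda_t$, which closes the recursion and proves the theorem.
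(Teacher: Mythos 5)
The paper never actually writes out a proof of Theorem \ref{thm:gmainb}; it is introduced only with ``Similarly, we have the following counterpart,'' so your proposal has to be judged as an attempt to fill in the intended argument rather than against a reference proof. Your route is exactly that intended argument: peel off $T_t^{m_t}$ in the $\|\cdot\|_{V_t}$ norm via Lemma \ref{le:ge} as in Theorem \ref{thm:gmain}, handle the jump with a weighted analogue of Lemma \ref{le:Jt} as in Theorem \ref{thm:mainb}, unwind the recursion $a_t\le\epsilon_t(a_{t-1}+\alpha_t+\lambda_t)$ from $a_0=0$, and pass to $\|\cdot\|_1$ using $V_t\ge1$. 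Your treatment of the marginal piece is complete and is genuinely valuable: you identify condition 1(b) as precisely the tower property that collapses the weight, $\int V_t(\theta^{(t-1)},\eta_t)\pi_t(\eta_t|\theta^{(t-1)})\,d\eta_t=V_{t-1}(\theta^{(t-1)})$, turning that piece into $\|p_{t-1}-\pi_t\|_{V_{t-1}}$ on $\bbR^{d_{t-1}}$, and you verify via Jensen that $\pi_t(V_{t-1}^2)\le\pi_t(V_t^2)\le C$ so that the Cauchy--Schwarz/Hellinger step of Theorem \ref{thm:gmain} applies and yields $\alpha_t$. None of this is spelled out in the paper, and it explains why hypothesis 1(b) appears in the statement at all.

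The one place where you assert rather than prove is the jumping piece, and you are right that it is the crux. What your argument needs is the pointwise domination $V_t(\theta^{(t-1)},\eta_t)\le\tilde V_t(\eta_t)$, since then, uniformly in $\theta^{(t-1)}$,
\begin{align*}
\int V_t(\theta^{(t-1)},\eta_t)\,\big|J_t(\theta^{(t-1)},\eta_t)-\pi_t(\eta_t|\theta^{(t-1)})\big|\,d\eta_t
\le \big\|\pi_t(\cdot|\theta^{(t-1)})-J_t(\theta^{(t-1)},\cdot)\big\|_{\tilde V_t}\le\lambda_t,
\end{align*}
and integrating against $p_{t-1}$ bounds the jumping piece by $\lambda_t$. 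Saying that condition 3 was ``introduced precisely for this role'' is not a proof of that domination. In fact the difficulty traces back to the theorem statement itself: with $\tilde V_t(\eta_t)=\int_{\bbR^{d_{t-1}}}V_t(\theta^{(t-1)},\eta_t)\,d\theta^{(t-1)}$ read literally as a Lebesgue integral of a function bounded below by $1$, one has $\tilde V_t\equiv+\infty$, so the domination holds trivially but condition 3 can then only be satisfied when $J_t$ is exact conditional sampling, in which case the jumping piece vanishes identically and $\lambda_t$ is vacuous. Either way your bound survives, but the theorem has no content for approximate conditional sampling under that reading. The non-degenerate reading that makes both the hypothesis and your argument meaningful is $\tilde V_t(\eta_t)=\sup_{\theta^{(t-1)}}V_t(\theta^{(t-1)},\eta_t)$, or equivalently stating condition 3 with the weight $V_t(\theta^{(t-1)},\cdot)$ inside the supremum over $\theta^{(t-1)}$; under that reading the domination is immediate and your proof closes. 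So: right approach, correct where it is explicit, but you should state the domination inequality and the reading of $\tilde V_t$ it requires, rather than leaving the step as an appeal to the hypothesis's intent.
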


\subsection{Relationship between Markov chain convergence rate and the autocorrelation function}\label{se:semt}
The convergence results in the previous two subsections are primarily based on a coupling technique, which can provide explicitly quantitative convergence bounds for computation. The arguments in this subsection will mainly utilize functional analysis and operator theory, which can reveal the relationship between convergence rate and maximal correlation between two states in the Markov chain. For background details, refer to chapter 12 in \cite{Jun2001}.

For a time homogeneous Markov chain $\{X_t:t=0,1\ldots\}$ with transition kernel $T(x,y)$ and stationary distribution $\pi$,
consider the space of all mean zero and finite variance functions under $\pi$
\begin{align*}
    L_0^2(\pi)=\bigg\{h(x):\int h^2(x)\pi(x)dx<\infty\ \text{ and }\
    \int h(x)\pi(x)dx=0\bigg\}.
\end{align*}
Being equipped with the inner product
\begin{align}\label{eq:inner}
    \langle h, g\rangle=E_{\pi}\{h(x)\cdot g(x)\},
\end{align}
$L_0^2(\pi)$ becomes a Hilbert space. On $L_0^2(\pi)$, we can define two operators, called forward and backward operators, as
\begin{align*}
    Fh(x)\triangleq &\int h(y)T(x,y)dy=E\{h(X_1)|X_0=x\},\\
    Bh(y)\triangleq &\int h(y)\frac{T(x,y)\pi(x)}{\pi(y)}dy=E\{h(X_0)|X_1=y\}.
\end{align*}
The operator $F$ can be considered as the continuous state generalization of the transition matrix $T$ for finite state Markov chain (with $Tv$ as the operation on vector space).  Similarly, the operator $B$ can be considered as the generalization of the transpose of $T$. With this definition, we can see that
\begin{align*}
    E\{h(X_t)|X_0=x\}=F^th(x)\ \text{ and }\ E\{h(X_0)|X_t=y\}=B^th(y).
\end{align*}

Define the norm of an operator $F$ to be the operator norm induced by the $L_0^2(\pi)$ norm defined in \eqref{eq:inner}. By iterative variance formula
\begin{align*}
    \text{var}\{h(X_1)\}=E[ \text{var}\{h(X_1|X_0)\}]+\text{var}[E\{h(X_1)|X_0\}]\leq \text{var}[E\{h(X_1)|X_0\}],
\end{align*}
 and hence the norm of $F$ and $B$ are both less than or equal to one.
By the Markov property, $F$ and $B$ are adjoint to each other
\begin{align*}
    \langle Fh,g\rangle=\langle h,Bg\rangle.
\end{align*}
Since nonzero constant functions are excluded from $L_0^2(\pi)$, the spectral radius $r_F$ of $F$ is strictly less than one under mild conditions \citep{Liu1995}, which is defined by
\begin{align*}
    r_F=\lim_{t\rightarrow\infty}||F^t||^{\frac{1}{t}}<1.
\end{align*}
Lemma 12.6.3 in \cite{Jun2001} provides a Markov chain convergence bound in terms of the operator norm of $F^t$,
\begin{align}\label{eq:nor}
    ||T^t\circ p_0-\pi||_{L^2(\pi)}\leq ||F^t||\cdot||p_0-\pi||_{L^2(\pi)},
\end{align}
where $||p-\pi||^2_{L^2(\pi)}=\int (p(z)-\pi(z))^2/\pi(z)dz$ and $||p-\pi||_1\leq||p-\pi||_{L^2(\pi)}$ holds for any probability measure $p$.
Theorem 2.1 in \cite{roberts1997} shows that if \eqref{eq:nor} is true for a time reversible Markov chain with transition kernel $T$, then the chain is geometric ergodic with that same rate function, i.e. there exists a potential function $V:\mathcal{X}\to [1,\infty]$, such that
\begin{align*}
    ||T^t(x,\cdot)-\pi(\cdot)||_1\leq V(x)||F^t||,\quad x\in\mathds{X}.
\end{align*}
Therefore,
\eqref{eq:nor} implies a geometric convergence in $L_1$ norm with rate function $r(t)=||F^t||\sim r_F^t$. On the other side, by Lemma 12.6.4 in \cite{Jun2001},
\begin{align}\label{eq:corr}
    \sup_{g,h\in L^2(\pi)}\text{corr}\{g(X_0),h(X_t)\}
    =\sup_{||g||=1,||h||=1}\langle F^th,g\rangle=||F^t||.
\end{align}
This suggests the maximal autocorrelation function is of the same decay rate as the rate function $r(t)$. In practice, for multidimensional process $X_t=(X_{1,t},\ldots,X_{p,t})$, the above quantity can often be well approximated by
\begin{align*}
    \max_{j=1,\ldots,p}|\text{corr}\{X_{j,0},X_{j,t}\}|.
\end{align*}
Therefore, the maximal sample autocorrelation function provides a quantitative description of the mixing rate of the Markov chain, which provides the rationale for our choice of $m_t$ in section \ref{se:mt}.

If the Markov chain is reversible, then $F=B$ and hence $F$ is self-adjoint. Under the further assumption that $F$ is compact, $||F^t||=|\lambda_1|^t$, where $|\lambda_1|\geq|\lambda_2|\geq\cdots$ are the discrete eigenvalues of $F$. Therefore the rate function would be $r(t)=|\lambda_1|^t$.
For any $h(x)\in L_0^2(\pi)$, define the autocorrelation function as
\begin{align*}
    f(t)=\text{corr}\{h(X_t),h(X_0)\}, t\geq1.
\end{align*}
Let $\alpha_1(x),\alpha_2(x),\ldots$ be the corresponding eigenfunctions. Then as long as $\langle h,\alpha_1\rangle\neq 0$, we have
\begin{align*}
    \lim_{t\rightarrow\infty}\{|f(t)|\}^{1/t}=|\lambda_1|,
\end{align*}
which implies that the autocorrelation function and the rate function are very similar, i.e.
\begin{align}\label{eq:rate}
f(t)\sim r(t)\sim |\lambda_1|^t.
\end{align}

\section{Simulation with Finite Gaussian Mixtures}\label{se:mixm}
The mixing rate of Gibbs samplers are notoriously slow for mixture models \citep{jasra2005}. As an illustrative example, we consider the Bayesian Gaussian mixture model of \cite{richardson1997}, which is also considered by \cite{Moral2006} as a benchmark to test their method. Observations $y_1,\ldots,y_n$ are i.i.d. distributed as
\begin{align}\label{eq:mix}
    [y_i\ |\ \mu_{1:k},\lambda_{1:k},w_{1:k}]\ \sim\ \sum_{j=1}^kw_j N(\mu_j,\lambda_j^{-1}),
\end{align}
where $\lambda_{1:k}$ and $\lambda_{1:k}$ are the means and inverse variances of $k$ Gaussian components respectively, and $w_{1:k}$ are the mixing weights satisfying the constraint $\sum_{j=1}^kw_j=1$. The priors for the parameters of each component $j=1,\ldots,k$ are taken to be exchangeable as $\mu_j\sim N(\zeta,\kappa^{-1})$, $\lambda_j\sim Ga(\alpha,\beta)$, $w_{1:k}\sim Diri(\delta)$, where $Ga(\alpha,\beta)$ is the gamma distribution with shape $\alpha$ and rate $\beta$ and $Diri(\delta)$ is the Dirichlet distribution with number of categories $k$ and concentration parameter $\delta$. To enable a Gibbs sampler for the above model, we introduce for each observation $i=1,\ldots,n$ a latent class indicator $z_i$ such that
\begin{align*}
    &[y_i\ |\ z_i=j,\mu_{1:k},\lambda_{1:k},w_{1:k}] \sim  N(\mu_j,\lambda_j^{-1}),\\
    &P(z_i=j|w_{1:k})\ \propto \ w_j.
\end{align*}
Then by marginalizing out $z_i$'s, we can recover \eqref{eq:mix}.
With the above exchangeable prior, the joint posterior distribution $P(\mu_{1:k}|y_1,\ldots,y_n)$ of the $k$ component means $\mu_{1:k}$ has $k!$ modes and the marginal posterior for each $\mu_j$, $j=1,\ldots,k$ is the same as a mixture of $k$ components. Therefore, we can diagnose the performances of various samplers by comparing the marginal posteriors of $\mu_1,\ldots,\mu_k$.  Standard MCMC algorithms tend to get stuck for long intervals in certain local modes, and even a very long run cannot equally explore all these modes \citep{jasra2005}.

In this simulation, we generate the data with $n=100$ samples and choose the true model as $k=4$, $\mu_{1:4}=(-3, 0, 3, 6)$, $\lambda_{1:4}=(0.55^{-2}, 0.55^{-2}, 0.55^{-2}, 0.55^{-2})$ and $w_{1:4}=(0.25,0.25,0.25,0.25)$, which has the same settings as in \cite{jasra2005}
and \cite{Moral2006}. The hyperparameters for the priors are: $\zeta=0$, $\kappa=0.01$, $\alpha=1$, $\beta=2$ and $\delta=1$. We consider a batch setup with batch size (BS) $1, 2, 4, 6, 8$ and $10$, which means that data arrive in batches of size BS. As a result, the algorithms operate $T=\lceil 100/BS\rceil=100,50,25,17,13$ steps, where $\lceil x\rceil$ stands for the smallest integer no less than $x$.

In SMCMC, the dimension of the parameter $\theta^{(t)}=(\mu_{1:k},\lambda_{1:k},w_{1:k},z_{1:n_t})$ at time $t$ is increasing when the latent class indicators $z_{1:t}$ are included, where $n_t=0$ for $t=0$ or $100-BS\cdot(T-t)$ for $t=1,\ldots,T$ is the data size at time $t$. We choose the transition kernel $T_t$ to correspond to that for the Gibbs sampler. The jumping kernel $J_t$ is the conditional distribution for the additional latent indicators of $y_{(n_{t-1}+1):n_t}$ given $\theta^{(t)}$ and $y_{1:n_t}$. Note that $z_i$ are conditionally independent of $z_j$ for $i\neq j, i,j\leq n_t$ given $(\mu_{1:k},\lambda_{1:k},w_{1:k},y_{1:n_t})$.

We compare SMCMC with two competitors. The first algorithm is the sequential Monte Carlo (SMC) sampler in \cite{Moral2006}, which avoids data augmentation and works directly with the posterior of $(\mu_{1:k},\lambda_{1:k},w_{1:k})$ using MH kernels. The second algorithm is the parallel Gibbs sampler \citep{richardson1997} running on the full data $y_1,\ldots,y_n$, with $L$ Gibbs samplers running in parallel, whose iterations $K_{BS}$ equal the total Gibbs steps $\sum_{t=1}^Tm_t$ in the SMCMC with batch size $BS$. The posterior distribution of each $\mu_j$ with $j=1,2,3,4$ is approximated by the empirical distribution of the $L$ samples at $K_{BS}$th iteration in parallel. To demonstrate the annealing effect of SMCMC, the initial distributions of the $L$ chains for both SMCMC and MCMC (parallel Gibbs) are centered at $(-3,0,3,6)$.  As a result, if no pair of labels are switched, the posterior draws will be stuck around the local mode centered at $(-3, 0, 3, 6)$, which is one of the $4!=24$ local modes.

To compare the three algorithms, we calculate the averages of sorted estimated means across 10 trials under each setting as shown in Table~\ref{table:1}. More specifically, we sort the estimated posterior means of $\mu_{1:4}$ in increasing order for each run and then average the $4$ sorted estimates over 10 replicates.
A good algorithm is expected to provide similar posterior means of $\mu_{1:4}$, which is approximately 1.5 in our case. The purpose for sorting the estimated means is to prevent the differences in the estimated posterior means being washed away from averaging across 10 replicates.

As can be seen from Table~\ref{table:1}, SMCMC outperforms both SMC and MCMC under each setting and has satisfactory performance even when the batch size is 6, i.e. the number of time steps $T$ is 17. Moreover, the performance of SMCMC appears stable as the batch size grows from $1$ to $6$, and become worse when the batch size increases to $8$ and $10$. A similar phenomenon is observed for SMC, with performance starting to deteriorate at batch size $6$. MCMC has slightly worse performance with batch size 1 than SMCMC. However, its performance rapidly becomes bad as the number of iterations decreases. The comparison between SMCMC and MCMC illustrates substantial gains due to annealing for our method.

Figure~\ref{fig:1} displays some summaries for SMCMC with batch size 1. The left plot shows the number of Gibbs iteration $m_t$ versus time $t$ (which is equal to the sample size at time t). $m_t$ increases nearly at an exponential rate, which indicates the slow mixing rate of the Gibbs sampler used to construct the transition kernels $T_t$. As a by product of SMCMC, we can assess the convergence rate of the sampler used to construct $T_t$ as a function of the sample size. The automatic mixing diagnostics procedure guarantees the convergence of the approximated posterior as $t\to\infty$. The right panel shows the ``traceplot" for $\mu_{1:k}$ for one Markov chain among the $L$ chains. This is not the usual traceplot since we selected the last samples of $\mu_{1:k}$ at each time $t$, where $\mu_{1:k}$ is approximately distributed according to a time changing posterior $\pi_t$. This ``traceplot" suggests satisfactory mixing of $\mu_{1:k}$, i.e. frequent moves between the modes.

\begin{center}
\begin{table}
  \caption{Averages of sorted estimated means in mixture model by three approaches. We ran each algorithm 10 times with 1000 Markov chains or particles. We sorted the estimated means in increasing order for each run and then averaged the sorted estimates over 10 replicates. The last column reports the sample standard deviations of the first 4 numbers displayed. In the parenthesis following MCMC are the number of iterations it runs, which is equal to the average iteration the corresponding SMCMC runs across 10 replicates.}
  \centering
  \fbox{%
\begin{tabular}{lC{1.7cm}C{1.7cm}C{1.7cm}C{1.7cm}C{1.7cm}}
  \multirow{2}{*}{Algorithm description} & \multicolumn{4}{c}{Averages of sorted estimated component means} & standard\\
    & $\mu_1$ & $\mu_2$ & $\mu_3$ & $\mu_4$ & deviation \\
    \hline
   SMCMC (batch size 1) & 1.38 & 1.50 & 1.57 & 1.67 & 0.12\\
   SMC (batch size 1) & 1.13 & 1.37 & 1.60 & 1.97 & 0.36\\
   MCMC (8621 iterations) & 1.31 & 1.42 & 1.56  & 1.77 & 0.20\\
   \hline
   SMCMC (batch size 2) & 1.40 & 1.50 & 1.56 & 1.66 & 0.11\\
   SMC (batch size 2) & 1.22 & 1.46 & 1.75 & 1.99 & 0.34\\
   MCMC (4435 iterations) & 0.91 & 1.12 & 1.30  & 2.69 & 0.81\\
   \hline
   SMCMC (batch size 4) & 1.42 & 1.50 & 1.54 & 1.64 & 0.09\\
   SMC (batch size 4) & 1.57 & 1.84 & 2.01 & 2.32 & 0.31\\
   MCMC (2367 iterations) & 0.23 & 0.71 & 1.20  & 3.34 & 1.37\\
   \hline
   SMCMC (batch size 6) & 1.36 & 1.48 & 1.59 & 1.65 & 0.13\\
   SMC (batch size 6) & 1.31 & 1.63 & 1.93 & 2.35 & 0.44\\
   MCMC (1657 iterations) & -0.23 & 0.53 & 1.32  & 4.45 & 2.05\\
      \hline
   SMCMC (batch size 8) & 1.35 & 1.45 & 1.54 & 1.73 & 0.16 \\
   SMC (batch size 8) & 1.43 & 1.69 & 1.99 & 2.35 & 0.40\\
   MCMC (1390 iterations) & -0.50 & 0.53 & 1.36  & 4.68 & 2.24\\
      \hline
   SMCMC (batch size 10) & 1.19 & 1.32 & 1.57 & 2.04 & 0.37\\
   SMC (batch size 10) & 1.36 & 1.69 & 1.98 & 2.38 & 0.43\\
   MCMC (1069 iterations) & -1.00 & 0.38 & 1.60  & 5.11 & 2.62\\
\end{tabular}}
  \label{table:1}
\end{table}
\end{center}

\begin{figure}[htp]
\centering
\begin{tabular}{cc}
    \includegraphics[width=2.7in]{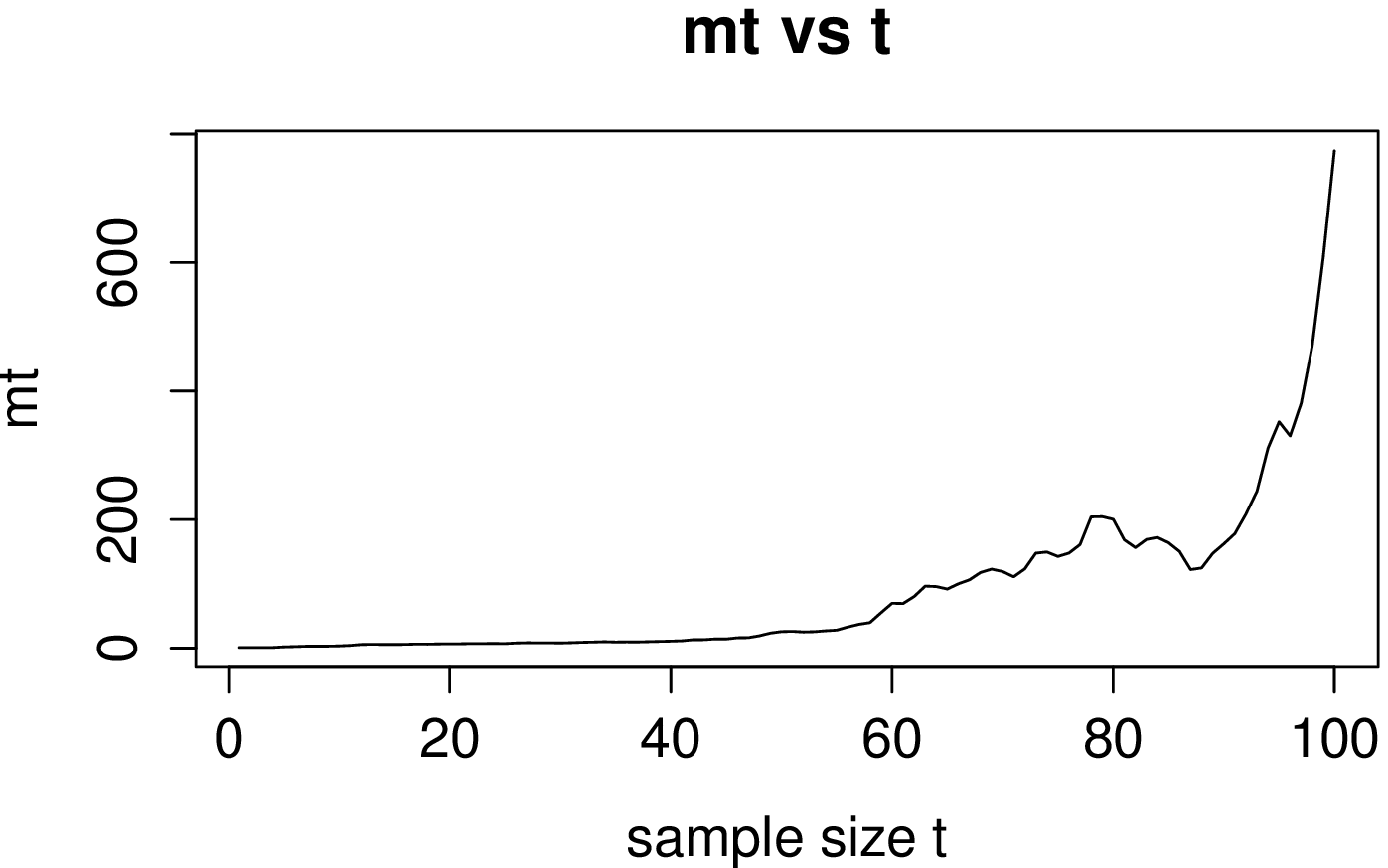}
    &
    \includegraphics[width=2.7in]{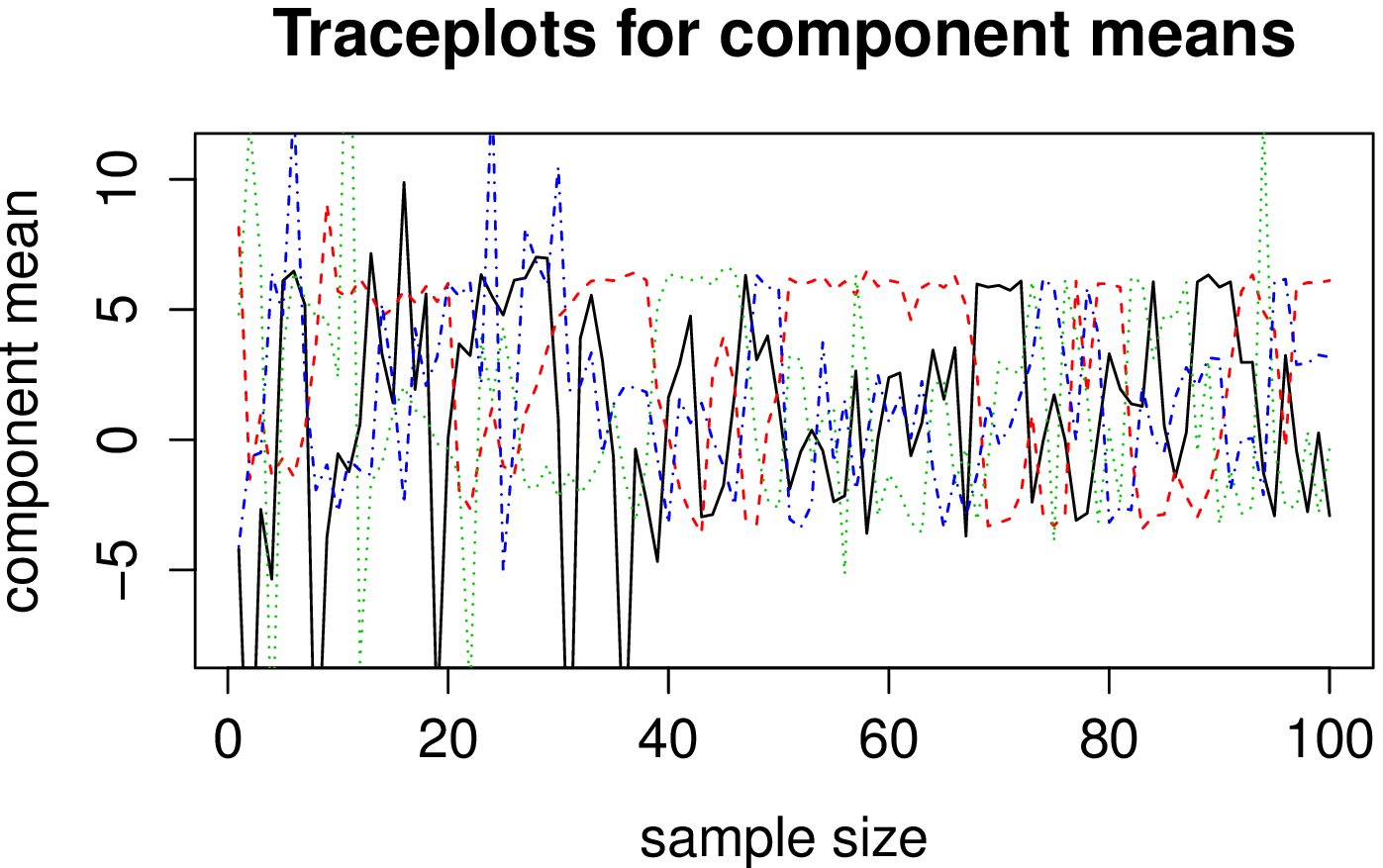}
\end{tabular}
\caption{Summaries of SMCMC with batch size 1. The left panel displays the plot of the number of Gibbs iterations $m_t$ versus time $t$ (which is equal to the sample size at time t). The right panel displays the last samples of $\mu_{1:k}$ at each time $t$ in one of $L$ Markov chains.}
\label{fig:1}
\end{figure}

\vspace{-1cm}
\section{Sequential Bayesian Estimation for Heart Disease Data}\label{se:npr}
In the following we apply SMCMC to a sequential, growing dimension nonparametric problem. We consider nonparametric probit regression with a Gaussian process (GP) prior. Let $y_1,y_2,\ldots$ be a sequence of binary responses and $x_1,x_2,\ldots$ the $p$ dimensional covariates. The model assumes
\begin{align*}
    P(y_i=1)=\Phi(f(x_i)),
\end{align*}
where $\Phi$ is the cdf of the standard normal distribution and $f$ is a $d$-variate nonlinear function.
We choose a GP as a prior, $f\sim GP(\kappa,K)$, with mean function $\kappa:\bbR^p\to\bbR$ and covariance function $K:\bbR^d\times\bbR^d\to\bbR$. We consider the squared exponential kernel $K_a(x,x')=\sigma^2\exp\{-a^2||x-x'||^2\}$ with a powered gamma prior on the inverse bandwidth, which leads to an adaptive posterior convergence rate \citep{Van2009}.

The computation of the nonparametric probit model can be simplified by introducing latent variables $z_i$ such that
\begin{equation}\label{eq:pz}
\begin{aligned}
    &P(y_i=1)=I(z_i>0),\\
    &z_i=f(x_i)+\epsilon_i,\epsilon_i\sim N(0,1).
\end{aligned}
\end{equation}
The model has simple full conditionals so that a Gibbs sampler can be used to sample the $z_i$'s and $F_t=\{f(x_1),\ldots,f(x_t)\}$.

To alleviate the $O(n^3)$ computational burden of calculating inverses and determinants of $n\times n$ covariance matrices, we use a discrete prior to approximate the powered gamma prior for $a$ and pre-compute those inverses and determinants over the pre-specified grid. We combine the sequential MCMC with the following off-line sequential covariance matrix updating.

Let $a_1,\ldots,a_H$ denote a grid of possible inverse bandwidths. For example, $a_h$ can be chosen as the $\frac{h-1}{H}$th quantile of the powered gamma prior and the discrete prior as the uniform distribution over $a_1,\ldots,a_H$. Let $C_h(x,x')=\exp\{-a_h^2||x-x'||^2\}$ and $K_{a_h}=\sigma^2C_h$. We use the notation $C(A,B)$ to denote the matrix $(c(a_i,b_j))_{p,q}$ for a function $C:\bbR^d\times\bbR^d\to\bbR$ and matrices $A\in\bbR^{p\times d}$, $B\in\bbR^{q\times d}$. Let $X_t=(x_1^T,\ldots,x_t^T)^T\in\bbR^{t\times d}$, $Y_t=(y_1,\ldots,y_t)$ and $Z_t=(z_1,\ldots,z_t)$ be the design matrix, response vector and latent variable vector at time $t$. At time $t$, for each $h=1,\ldots,H$, we update the lower triangular matrix $L_h^{(t)}$ and $(L_h^{(t)})^{-1}$ in the Cholesky decomposition $C_{h}^{(t)}=L_h^{(t)}(L_h^{(t)})^T$ of the $t\times t$ correlation matrix $C_{h}^{(t)}=C_h(X_t,X_t)$. The reason is two-fold: 1. inverse and determinant can be efficiently calculated based on $L_h^{(t)}$ and $(L_h^{(t)})^{-1}$; 2. due to the uniqueness of Cholesky decomposition, $L_h^{(t+1)}$ and $(L_h^{(t+1)})^{-1}$ can be simply updated by adding $(t+1)$th row and column to $L_h^{(t)}$ and $(L_h^{(t)})^{-1}$. More precisely, if $L_h^{(t+1)}$ and $(L_h^{(t+1)})^{-1}$ are written in block forms as
\begin{align*}
    L_h^{(t+1)}=\left(
      \begin{array}{cc}
        L_h^{(t)} & 0 \\
        B_h^{(t+1)} & d_h^{(t+1)} \\
      \end{array}
    \right)\text{ and }
    (L_h^{(t+1)})^{-1}=\left(
      \begin{array}{cc}
        (L_h^{(t)})^{-1} & 0 \\
        E_h^{(t+1)} & g_h^{(t+1)} \\
      \end{array}
    \right),
\end{align*}
where $B_h^{(t+1)}$ and $E_h^{(t+1)}$ are $t$-dimensional row vectors and $d_h^{(t+1)}$ and $g_h^{(t+1)}$ are scalars, then we have the following recursive updating formulas: for $h=1,\ldots,H$,
\begin{align*}
    d_h^{(t+1)}=&\big\{C_h(x_{t+1},x_{t+1})-C_h(x_{t+1},X_t)(L_h^{(t)})^{-T}(L_h^{(t)})^{-1}
    C_h(X_t ,x_{t+1})\big\}^{1/2},\\
    B_h^{(t+1)}=&C_h(x_{t+1},X_t)(L_h^{(t)})^{-1},\\
    g_h^{(t+1)}=&(d_h^{(t+1)})^{-1},\\  E_h^{(t+1)}=&-g_h^{(t+1)}C_h(x_{t+1},X_t)(L_h^{(t)})^{-T}(L_h^{(t)})^{-1},
\end{align*}
where for a matrix $A$, $A^{-T}$ is a shorthand for the transpose of $A^{-1}$. The computation complexity of the above updating procedure is $O(t^2)$.

As $t$ increases to $t+1$, the additional component $\eta_{t+1}$ is $(f(x_{t+1}),z_{t+1})$. Therefore, in the jumping step of the sequential updating, we repeat drawing $f(x_{t+1})$ and $z_{t+1}$ from their full conditionals in turn for $r$ times. In our algorithm, we simply choose $r=1$ as the results do not change much with a large $r$.
In the transition step of the sequential updating, each full conditional is recognizable under the latent variable representation \eqref{eq:pz} and we can run a Gibbs sampler at each time $t$. Predicting draws $f(x')$ on new covariates $x'$ can be obtained based on posterior samples of $F_t$.

Note that the computational complexity for the off-line updating at time $t$ is $O(t^2)$. Therefore the total complexity due to calculating matrix inversions and determinants is $O(\sum_{t=1}^nt^2)=O(n^3)$, which is the same as the corresponding calculations in the MCMC with all data. However, the proposed algorithm distributes the computation over time, allowing real-time monitoring and extracting of current information.

To illustrate the above approach, we use the south African heart disease data \citep{rousseauw1983,Hastie1987} to study the effects of obesity and age on the probability of suffering from hypertension. The data contains $n=462$ observations on 10 variables, including systolic blood pressure (sbp), obesity and age. A patient is classified as hypertensive if the systolic blood pressure is higher than 139 mmHg. We use $I(\text{sbp}>139)$ as a binary response with obesity and age as a two-dimensional covariate $x$.

Fig.~\ref{fig:2} demonstrates the relationship between the number of iterations $m_t$ and the sample size $t$. As can be seen, $m_t$ keeps fluctuating between 150-200 as $t$ becomes greater than 100, indicating that contrary to the mixture model example, the mixing rate of the above Markov chain designed for the nonparametric probit regression is robust to the sample size. The total number of iterations $\sum_{t=1}^nm_t$ is about 80k. However, the computation complexity of each SMCMC chain is much less than a 80k iterations full data MCMC since many iterations of SMCMC run with smaller sample sizes. In addition, we can reduce the iterations needed by increasing block sizes.

Fig.~\ref{fig:3} shows the fitted probabilities of hypertension as a function of obesity and age at $t=150, 250, 350, 462$. With a relatively small sample size, the bandwidth $a^{-1}$ tends to be small and the fitted probability contours are wiggly. As the sample size $t$ increases, the bandwidth grows. As a result, contours begin to capture some global features and are less affected by local fluctuations. In addition, at large time point $t=350$, the posterior changes little as the sample size further grows to $t=462$.  As expected, the probability of hypertension tends to be high when both obesity index and age are high. The gradient of the probability $P(\text{sbp}>139|\text{obesity,age})$ as a function of obesity and age tends to be towards the $45$-degree direction. The results in Fig.~\ref{fig:3} are indistinguishable from those obtained running a long MCMC at each time, which are omitted here.

\section{Discussions}
In this paper, we proposed a sequential MCMC algorithm to sample from a sequence of probability distributions. Supporting theory is developed and simulations demonstrate the potential power of this method. The performance of SMCMC is closely related to the mixing behavior of the transition kernel $T_t$ as $t\to \infty$. If $T_t$ tends to have poor mixing as $t$ increases, then updating the ensemble $\Theta_t$ every time a new data point arrives can lead to increasing computational burden over time.
To alleviate this burden, we have three potential strategies. First, we can make the updating of $\Theta_t$ less frequent as $t$ grows, i.e. updating $\Theta_t$ only at time $\{t_k:k=1,\ldots\}$ with $t_k\to \infty$ as $k\to \infty$ and $t_k-t_{k-1}\to \infty$, as long as $||\pi_{t_k}-\pi_{t_{k-1}}||_1\to 0$.
Second, we can let the $\epsilon$ in Algorithm \ref{al:1} decrease in $t$ so that the upper bound in Theorem \ref{thm:main} still converges to zero. Third, we can develop `forgetting' algorithms that only use the data within a window but still guarantee the convergence up to approximate error. The first two strategies may also be developed in an adaptive/dynamic manner, where the next step size $t_{k+1}-t_k$ or decay rate $\epsilon_{k+1}$ is optimized based on some criterion by using the past data and information.

\section*{Acknowledgments}
The authors thank Natesh Pillai for helpful comments on a draft.

\begin{figure}[htp]
\centering
    \includegraphics[width=3.5in]{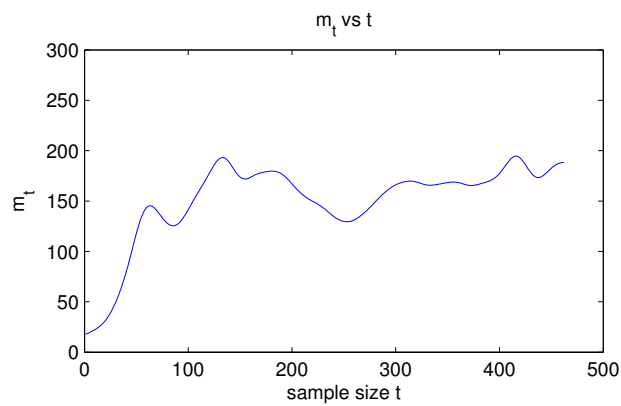}
\caption{The iterations $m_t$ at time $t$ versus the sample size $t$ is displayed. $m_t$ has been smoothed with window width equal to 10.}
\label{fig:2}
\end{figure}
\begin{figure}[htp]
\centering
    \includegraphics[width=6in]{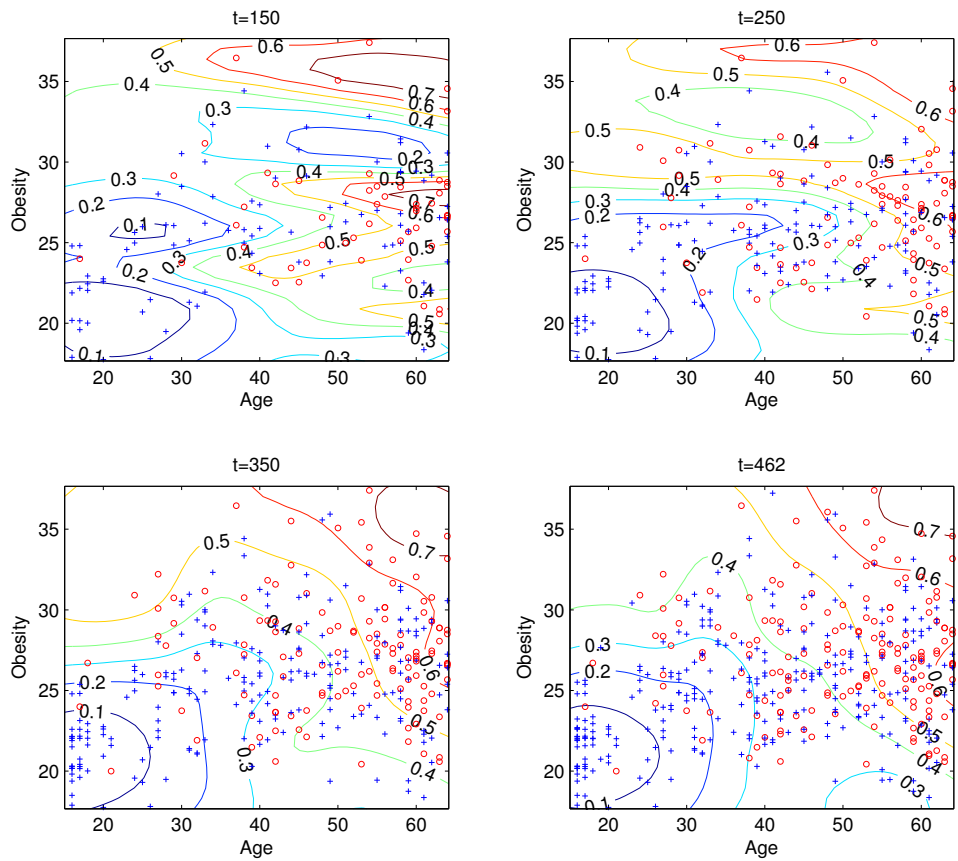}
\caption{The fitted hypertension probability contours at $t=150,250,350,462$. The circles correspond to hypertensive patients and plus signs correspond to normal blood pressure people.}
\label{fig:3}
\end{figure}

\bibliography{draft}

\begin{thebibliography}{23}

\bibitem[\protect\citeauthoryear{Chopin}{2002}]{chopin2002}
\begin{barticle}[author]
\bauthor{\bsnm{Chopin},~\bfnm{M.}\binits{M.}}
(\byear{2002}).
\btitle{A sequential particle filter for static models}.
\bjournal{Biometrika}
\bvolume{89}
\bpages{539-551}.
\end{barticle}
\endbibitem

\bibitem[\protect\citeauthoryear{Earlab and Deema}{2005}]{earlab2005}
\begin{barticle}[author]
\bauthor{\bsnm{Earlab},~\bfnm{D.~J.}\binits{D.~J.}} \AND
  \bauthor{\bsnm{Deema},~\bfnm{M.~W.}\binits{M.~W.}}
(\byear{2005}).
\btitle{Parallel tempering: Theory, applications, and new perspectives}.
\bjournal{Phys. Chem. Chem. Phys.}
\bvolume{7}
\bpages{3910-3916}.
\end{barticle}
\endbibitem

\bibitem[\protect\citeauthoryear{Geyer}{1991}]{geyer1991}
\begin{barticle}[author]
\bauthor{\bsnm{Geyer},~\bfnm{C.~J.}\binits{C.~J.}}
(\byear{1991}).
\btitle{Markov chain {M}onte {C}arlo maximum likelihood}.
\bjournal{Computing Science and Statistics: Proceedings of the 23rd Symposium
  on the Interface}
\bpages{156-163}.
\end{barticle}
\endbibitem

\bibitem[\protect\citeauthoryear{Ghosal, Ghosh and Van
  Der~Vaart}{2000}]{Ghosal2000}
\begin{barticle}[author]
\bauthor{\bsnm{Ghosal},~\bfnm{S.}\binits{S.}},
  \bauthor{\bsnm{Ghosh},~\bfnm{J.~K.}\binits{J.~K.}} \AND \bauthor{\bsnm{Van
  Der~Vaart},~\bfnm{A.~W.}\binits{A.~W.}}
(\byear{2000}).
\btitle{Convergence rates of posterior distributions}.
\bjournal{Ann. Statist.}
\bvolume{28}
\bpages{500-531}.
\end{barticle}
\endbibitem

\bibitem[\protect\citeauthoryear{Hastie and Tibshirani}{1987}]{Hastie1987}
\begin{barticle}[author]
\bauthor{\bsnm{Hastie},~\bfnm{T.}\binits{T.}} \AND
  \bauthor{\bsnm{Tibshirani},~\bfnm{R.}\binits{R.}}
(\byear{1987}).
\btitle{Nonparametirc logistic and proportional odds regression}.
\bjournal{Applied Statistics}
\bvolume{36}
\bpages{260-276}.
\end{barticle}
\endbibitem

\bibitem[\protect\citeauthoryear{Jasra, Holmes and Stephens}{2005}]{jasra2005}
\begin{barticle}[author]
\bauthor{\bsnm{Jasra},~\bfnm{A.}\binits{A.}},
  \bauthor{\bsnm{Holmes},~\bfnm{C.~C.}\binits{C.~C.}} \AND
  \bauthor{\bsnm{Stephens},~\bfnm{D.~A.}\binits{D.~A.}}
(\byear{2005}).
\btitle{Markov Chain {M}onte {C}arlo Methods and the Label Switching Problem in
  {B}ayesian Mixture Modeling}.
\bjournal{Statistical Science}
\bvolume{20}
\bpages{50-67}.
\end{barticle}
\endbibitem

\bibitem[\protect\citeauthoryear{Kou, Zhou and Wong}{2006}]{Kou2006}
\begin{barticle}[author]
\bauthor{\bsnm{Kou},~\bfnm{S.}\binits{S.}},
  \bauthor{\bsnm{Zhou},~\bfnm{Q.}\binits{Q.}} \AND
  \bauthor{\bsnm{Wong},~\bfnm{H.~W.}\binits{H.~W.}}
(\byear{2006}).
\btitle{Equi-energy sampler with applications in statistical inference and
  statistical mechanics}.
\bjournal{Ann. Statist.}
\bvolume{34}
\bpages{1581-1652}.
\end{barticle}
\endbibitem

\bibitem[\protect\citeauthoryear{Liu}{2001}]{Jun2001}
\begin{bbook}[author]
\bauthor{\bsnm{Liu},~\bfnm{J.~S.}\binits{J.~S.}}
(\byear{2001}).
\btitle{Monte {C}arlo {S}trategies in {S}cientific {C}omputing}.
\bpublisher{Springer, New York}.
\end{bbook}
\endbibitem

\bibitem[\protect\citeauthoryear{Liu and Chen}{1998}]{Liu1998}
\begin{barticle}[author]
\bauthor{\bsnm{Liu},~\bfnm{J.~S.}\binits{J.~S.}} \AND
  \bauthor{\bsnm{Chen},~\bfnm{R.}\binits{R.}}
(\byear{1998}).
\btitle{Sequential {M}onte {C}arlo Methods for Dynamic Systems}.
\bjournal{Journal of the American Statistical Association}
\bvolume{93}
\bpages{1032-1044}.
\end{barticle}
\endbibitem

\bibitem[\protect\citeauthoryear{Liu, Wong and Kong}{1995}]{Liu1995}
\begin{barticle}[author]
\bauthor{\bsnm{Liu},~\bfnm{S.~J.}\binits{S.~J.}},
  \bauthor{\bsnm{Wong},~\bfnm{H.~W.}\binits{H.~W.}} \AND
  \bauthor{\bsnm{Kong},~\bfnm{A.}\binits{A.}}
(\byear{1995}).
\btitle{Covariance structure and convergence rate of the Gibbs sampler with
  various scans}.
\bjournal{J. R. Statist. Soc. B}
\bvolume{57}
\bpages{157-169}.
\end{barticle}
\endbibitem

\bibitem[\protect\citeauthoryear{Mengersen and Tweedie}{1996}]{Mengersen1996}
\begin{barticle}[author]
\bauthor{\bsnm{Mengersen},~\bfnm{K.~L.}\binits{K.~L.}} \AND
  \bauthor{\bsnm{Tweedie},~\bfnm{R.~L.}\binits{R.~L.}}
(\byear{1996}).
\btitle{Rates of convergence of the {H}asting and {M}etropolis algorithms}.
\bjournal{The Annals of Statistics}
\bvolume{24}
\bpages{101-121}.
\end{barticle}
\endbibitem

\bibitem[\protect\citeauthoryear{Meyn and Tweedie}{1993}]{Meyn1993}
\begin{bbook}[author]
\bauthor{\bsnm{Meyn},~\bfnm{S.~P.}\binits{S.~P.}} \AND
  \bauthor{\bsnm{Tweedie},~\bfnm{R.~L.}\binits{R.~L.}}
(\byear{1993}).
\btitle{Markov {C}hains and {S}tochastic {S}tability}.
\bpublisher{Springer, New York}.
\end{bbook}
\endbibitem

\bibitem[\protect\citeauthoryear{Moral, Doucet and Jasra}{2006}]{Moral2006}
\begin{barticle}[author]
\bauthor{\bsnm{Moral},~\bfnm{P.~D.}\binits{P.~D.}},
  \bauthor{\bsnm{Doucet},~\bfnm{A.}\binits{A.}} \AND
  \bauthor{\bsnm{Jasra},~\bfnm{A.}\binits{A.}}
(\byear{2006}).
\btitle{Sequential {M}onte {C}arlo samplers}.
\bjournal{J. R. Statist. Soc. B}
\bvolume{68}
\bpages{411-436}.
\end{barticle}
\endbibitem

\bibitem[\protect\citeauthoryear{Neal}{2001}]{Neal2001}
\begin{barticle}[author]
\bauthor{\bsnm{Neal},~\bfnm{R.}\binits{R.}}
(\byear{2001}).
\btitle{Annealed importance sampling}.
\bjournal{Statist. Comput.}
\bvolume{11}
\bpages{125-139}.
\end{barticle}
\endbibitem

\bibitem[\protect\citeauthoryear{Neal}{2003}]{Neal2003}
\begin{barticle}[author]
\bauthor{\bsnm{Neal},~\bfnm{R.}\binits{R.}}
(\byear{2003}).
\btitle{Slice sampling}.
\bjournal{The Annals of Statistics}
\bvolume{3}
\bpages{705-767}.
\end{barticle}
\endbibitem

\bibitem[\protect\citeauthoryear{Richardson and Green}{1997}]{richardson1997}
\begin{barticle}[author]
\bauthor{\bsnm{Richardson},~\bfnm{S.}\binits{S.}} \AND
  \bauthor{\bsnm{Green},~\bfnm{P.~J.}\binits{P.~J.}}
(\byear{1997}).
\btitle{On {B}ayesian analysis of mixtures with an unknow number of
  components}.
\bjournal{J. R. Statist. Soc. B}
\bvolume{88}
\bpages{450-456}.
\end{barticle}
\endbibitem

\bibitem[\protect\citeauthoryear{Robert and Casella}{2004}]{robert2004}
\begin{bbook}[author]
\bauthor{\bsnm{Robert},~\bfnm{C.~P.}\binits{C.~P.}} \AND
  \bauthor{\bsnm{Casella},~\bfnm{G.}\binits{G.}}
(\byear{2004}).
\btitle{Monte {C}arlo {S}tatistical {M}ethods}.
\bpublisher{New York: Springer-Verlag}.
\end{bbook}
\endbibitem

\bibitem[\protect\citeauthoryear{Roberts and Rosenthal}{1997}]{roberts1997}
\begin{barticle}[author]
\bauthor{\bsnm{Roberts},~\bfnm{O.~G.}\binits{O.~G.}} \AND
  \bauthor{\bsnm{Rosenthal},~\bfnm{S.~J.}\binits{S.~J.}}
(\byear{1997}).
\btitle{Geometric ergodicity and hybrid {M}arkov chains}.
\bjournal{Elect. Comm. in Probab.}
\bvolume{2}
\bpages{13-25}.
\end{barticle}
\endbibitem

\bibitem[\protect\citeauthoryear{Rosenthal}{1995}]{Rosenthal1995}
\begin{barticle}[author]
\bauthor{\bsnm{Rosenthal},~\bfnm{J.~S.}\binits{J.~S.}}
(\byear{1995}).
\btitle{Minorization Conditions and Convergence Rates for {M}arkov Chain
  {M}onte {C}arlo}.
\bjournal{Journal of the American Statistical Association}
\bvolume{90}
\bpages{558-566}.
\end{barticle}
\endbibitem

\bibitem[\protect\citeauthoryear{Rousseauw et~al.}{1983}]{rousseauw1983}
\begin{barticle}[author]
\bauthor{\bsnm{Rousseauw},~\bfnm{J.}\binits{J.}}, \bauthor{\bparticle{du}
  \bsnm{Plessis},~\bfnm{J.}\binits{J.}},
  \bauthor{\bsnm{Benade},~\bfnm{A.}\binits{A.}},
  \bauthor{\bsnm{Jordaan},~\bfnm{P.}\binits{P.}},
  \bauthor{\bsnm{Kotze},~\bfnm{J.}\binits{J.}},
  \bauthor{\bsnm{Jooste},~\bfnm{P.}\binits{P.}} \AND
  \bauthor{\bsnm{Ferreira},~\bfnm{J.}\binits{J.}}
(\byear{1983}).
\btitle{Coronary risk factor screening in three rural communities}.
\bjournal{South African Medical Journal}
\bvolume{64}
\bpages{430-436}.
\end{barticle}
\endbibitem

\bibitem[\protect\citeauthoryear{Storvik}{2002}]{Storvik2002}
\begin{barticle}[author]
\bauthor{\bsnm{Storvik},~\bfnm{G.}\binits{G.}}
(\byear{2002}).
\btitle{Particle Filters for State-Space Models With the Presence of Unknown
  Static Parameters}.
\bjournal{IEEE Transactions on Signal Processing}
\bvolume{50}
\bpages{281-289}.
\end{barticle}
\endbibitem

\bibitem[\protect\citeauthoryear{van~der Vaart and van Zanten}{2009}]{Van2009}
\begin{barticle}[author]
\bauthor{\bparticle{van~der} \bsnm{Vaart},~\bfnm{A.~W.}\binits{A.~W.}} \AND
  \bauthor{\bparticle{van} \bsnm{Zanten},~\bfnm{J.~H.}\binits{J.~H.}}
(\byear{2009}).
\btitle{Adaptive {B}ayesian estimation using a {G}aussian random field with
  inverse Gamma bandwidth}.
\bjournal{Ann. Statist.}
\bvolume{37}
\bpages{2655-2675}.
\end{barticle}
\endbibitem

\bibitem[\protect\citeauthoryear{West}{1993}]{West1993}
\begin{barticle}[author]
\bauthor{\bsnm{West},~\bfnm{M.}\binits{M.}}
(\byear{1993}).
\btitle{Approximating posterior distributions by mixtures}.
\bjournal{J. R. Statist. Soc. B}
\bvolume{55}
\bpages{409-422}.
\end{barticle}
\endbibitem

\end{thebibliography}
\end{document}